\definecolor{lightgray}{gray}{0.7}
\definecolor{extralightgray}{gray}{0.9}
\newcommand{\CoTa}{lightgray}  
\newcommand{\CoTb}{extralightgray}  
\theoremstyle{plain}
\newtheorem{theorem}{Theorem}
\newtheorem{proposition}{Proposition}
\newtheorem{lemma}{Lemma}
\newtheorem{corollary}{Corollary}
\newcommand{\Q}{\mathbb{Q}}
\newcommand{\R}{\mathbb{R}}
\newcommand{\epsi}{\varepsilon}
\newcommand{\intpart}[1]{\left\lfloor#1\right\rfloor}
\DeclareMathOperator{\Res}{Res}
\newcommand{\res}{\text{res}}
\newenvironment{List}{\begin{list}{$\bullet$}{
\setlength{\labelwidth}{.5cm}
\setlength{\leftmargin}{.7cm}
}
}{\end{list}}
\author[F.~Battistoni]{Francesco Battistoni}
\address{Dipartimento di Matematica per le Scienze Economiche, Finanziarie ed Attuariali\\
         Universit\`{a} Cattolica\\
         via Necchi 9\\
         20123 Milano\\
         Italy}
\email{francesco.battistoni@unicatt.it}
\author[G.~Molteni]{Giuseppe Molteni}
\address{Dipartimento di Matematica\\
         Universit\`{a} di Milano\\
         via Saldini 50\\
         20133 Milano\\
         Italy}
\email{giuseppe.molteni1@unimi.it}
\keywords{Regulator, Classification of number fields}
\subjclass[2020]{11Y40, 11R29, 11R27}
\title{Generalized Pohst inequality and small regulators}
\begin{document}

\begin{abstract}
Current methods for the classification of number fields with small regulator depend mainly on an upper
bound for the discriminant, which can be improved by looking for the best possible upper bound of a
specific polynomial function over a hypercube. In this paper, we provide new and effective upper
bounds for the case of fields with one complex embedding and degree between five and nine: this is done
by adapting the strategy we have adopted to study the totally real case, but for this new setting
several new computational issues had to be overcome. As a consequence, we detect the four number fields
of signature $(r_1,r_2)=(6,1)$ with smallest regulator; we also expand current lists of number fields
with small regulator in signatures $(3,1)$, $(4,1)$ and $(5,1)$.
\end{abstract}

\maketitle

\begin{center}
To appear in Math. Comp. 2024 \\
DOI: \url{https://doi.org/10.1090/mcom/3954}
\end{center}

\section{Introduction}
A problem in Algebraic Number Theory consists in giving a complete classification, up to isomorphism, of
number fields with prescribed properties: in particular, scholars are interested in tabulating complete
lists of number fields $K$ such that some chosen invariant is less than some given upper bound. Typical
examples are the classification of fields with small discriminant~\cite{battistoni2019minimum,
battistoni2020small, pohstDegree7TotallyReal, pohst1982computation, pohst1990minimum} and the
classification of imaginary quadratic fields with bounded class number~\cite{HeegnerDiophantische,
starkHeegner, WatkinsClassImaginary}. Both these investigations benefit from the crucial fact that one
can prove that the desired fields occur in finite number, so that it is theoretically possible to
describe them all in a finite time; furthermore, this permits the construction of databases gathering
fields with small discriminant, like the LMFDB database~\cite{lmfdb}, Kl\"{u}ners-Malle's database for fields
with prescribed Galois group~\cite{klunersMalle}, PARIgp tables~\cite{MegrezTables} and Jones-Roberts'
database~\cite{jonesRoberts}.

In this paper we are interested in the classification of number fields $K$ of given signature
$(r_1,r_2)$, i.e. with $r_1$ real embeddings and $r_2$ couples of conjugated complex embeddings, and with
small regulator $R_K$. For every $C>0$, there exists only a finite number of such fields satisfying $R_K
\leq C$ and not of CM type. This is possible thanks to an inequality by Remak~\cite{remak} which bounds
the absolute value of the discriminant $d_K$ in terms of a constant $R(r_1,r_2,C)$ depending on $r_1$,
$r_2$ and $C$ for the not CM case. However, the resulting upper bound is too big to also permit a real
description of all the fields with discriminant within the output range. A procedure to overcome this
difficulty was developed by Astudillo, Diaz y Diaz and Friedman~\cite{regulators}, who combined Remak's
inequality with a lower bound for $R_K$ deduced from explicit formulae of Dedekind Zeta
functions~\cite{friedmanAnalyticRegulator}: the resulting method allowed the aforementioned authors to
obtain the full lists of fields with smallest regulators for the following cases:
\begin{List}
    \item fields of degree at most $6$, in any signature $(r_1,r_2)$,
    \item fields of degree $7$ in any signature apart from $(r_1,r_2)=(5,1)$,
    \item fields of degree $8$ with signature $(r_1,r_2)=(8,0)$ and $(0,4)$,
    \item fields of degree $9$ with signature $(r_1,r_2)=(9,0)$.
\end{List}
The case of the signature $(5,1)$ was solved later by Friedman and Ramirez-Raposo~\cite{RamirezRaposo}
with an ad hoc argument taking advantage of the fact that for this signature there is one non-conjugated
complex embedding. This, together with the success obtained for the totally real signatures $(8,0)$ and
$(9,0)$, suggested that the signature of the considered fields should play a crucial role not only in the
formulation of the problem, but also in the possible success of the specific procedure.

In greater detail, the key ingredient in these procedures is an upper bound as sharp as possible for the
function
\[
  P_{n,r_2}(y_1,\ldots, y_n)\coloneqq \prod_{1\leq i < j\leq n}\left|1-\frac{y_i}{y_j}\right|
\]
where $n\geq 2$ and the $n$-ple $(y_1,\ldots,y_n)$ satisfies the following two conditions:
\begin{List}
\item[A)] The numbers $y_i$ are ordered as $0<|y_1|\leq |y_2|\leq \cdots \leq |y_n|$,
\item[B)] Among the numbers $y_i$, there are $r_2$ couples of complex conjugated and non-real
    numbers, while all the remaining numbers are real.
\end{List}
A simple estimate for $P_{n,r_2}$ is $n^{n/2}$~\cite{bertin} and is deduced by looking at the function as
the determinant of an $n\times n$ matrix with entries bounded by $1$ in absolute value, and applying then
Hadamard's inequality; this upper bound is suitable for the procedure to succeed when the degree is
small, but it is not strong enough as the degree of the considered fields increases; for example this
method applied to the signature $(5,1)$ with the upper bound $7^{7/2}$ is not able to detect the fields
with small regulators. The same phenomenon occurs if one applies the method to the signatures $(6,1),
(4,2)$ and $(2,3)$ in degree $8$ with the upper bound $8^{8/2}$. Friedman and
Ramirez-Raposo~\cite{RamirezRaposo} succeeded to lower the upper bound of $P_{7,1}$ from $7^{7/2} =
907.49\ldots$ to $\exp(6)=403.42\ldots$ and this improvement was enough for the procedure to actually
classify fields with small regulator in this signature.\\
The results for the totally real cases in degree $8$ and $9$ were obtained earlier because the desired
upper bound is consistently smaller whenever only real numbers are considered: in fact,
Pohst~\cite{pohstRegulator} proved that the correct upper bound for the totally real case
is $2^{\intpart{n/2}}$ when $n\leq 11$ already in '77.\\
Recently we were able to extend this bound to every $n$, see~\cite{battistonimolteni} (for a different
proof see also~\cite{Ramirez-Raposo}). This is achieved by developing in greater detail the idea
(initially sketched by Pohst) of associating to the polynomial $P_{n,0}$ a graphical scheme, i.e. a
triangular array such as
\begin{equation}\label{graphScheme1}
\ytableausetup{nosmalltableaux}
\begin{ytableau}
 {+}   & {-}   & {-}   & {+} \\
 \none & {-}   & {-}   & {+} \\
 \none & \none & {+}   & {-} \\
 \none & \none & \none & {-} \\
\end{ytableau}
\end{equation}
where every square represents a factor of $P_{n,0}$ after the change of variables $x_i\coloneqq
y_i/y_{i+1}$ and the sign at place $(i,j)$ represents the sign of the product $\prod_{k=i}^j x_k$, and
then to recognize patterns of signs in any possible graphical scheme that can be replaced with other
patterns producing an upper bound for the corresponding functions. Eventually, we were able to prove that
it is possible to reduce each graphical scheme into the one defined by signs ``$-$'' on the main
diagonal. Finally, this special scheme can be covered by elementary block of signs which are bounded by
known constants. In the example below, the scheme turns out to be estimated by $4$ because each
triangular pattern is estimated by $2$ and the square one is bounded by $1$.
\begin{equation}\label{graphScheme2}
\ytableausetup{nosmalltableaux}
\begin{ytableau}
 {-}    & {+}    & *(\CoTa){-}  & *(\CoTa){+}  \\
 \none  & {-}    & *(\CoTa){+}  & *(\CoTa){-}  \\
 \none  & \none  & *(\CoTb){-}  & *(\CoTb){+}  \\
 \none  & \none  & \none        & *(\CoTb){-}  \\
 \none  & \none  & \none        & \none \\
\end{ytableau}
\quad
\leq 2 \cdot 1\cdot 2 = 4
\end{equation}
\indent %
Motivated by this result, we decided to adapt the technique of graphical schemes to the study of the
functions $P_{n,1}$ and so to the research of fields with one complex embedding and small regulator. The
new setting emerged to be much more intricate: as we will explain in the next sections, more than 150
estimates were needed for the estimate of the new graphical schemes (contrary to the only nine
inequalities needed for the totally real case), and the proof of several of these estimates required
deeper analytic and computational work. The results we obtained are the following.
\begin{theorem}\label{theoremDegree5}
Let $M\coloneqq 3^{15/2}/(4\cdot 7^{7/2})$. Then
\[
  P_{5,1} \leq 16\, M = 16.6965\ldots
\]
and this upper bound is the best possible one.
\end{theorem}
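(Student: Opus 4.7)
The plan is to adapt the graphical scheme technique from~\cite{battistonimolteni} to the mixed-signature setting with one complex pair. After the change of variables $x_i \coloneqq y_i/y_{i+1}$ for $i=1,\ldots,4$, condition A becomes $|x_i|\leq 1$ and $P_{5,1}$ decomposes into ten factors of the form $|1-\prod_{k=i}^{j-1} x_k|$. If the complex pair occupies positions $(m,m+1)$ with $m\in\{1,2,3,4\}$, then $x_m = e^{i\alpha}$ has modulus one, the $x_i$ with $i\neq m$ have real moduli in $[0,1]$, and the variables $x_{m\pm 1}$ carry a phase coupled to $\alpha$.

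I would first enumerate the four positions of the complex pair and, for each, rewrite the ten factors in terms of the moduli $r_i=|x_i|$ and of $\alpha$. Factors that do not involve $x_m$ admit the familiar sign analysis and can be bundled into triangular or square blocks bounded by $2$ and $1$, as in the totally real case. Factors containing $x_m$ become expressions of the form $|1 - \rho\, e^{\pm i\alpha}|$ and must be estimated jointly, typically together with one or two adjacent real moduli, to avoid the lossy bound $|1-\rho e^{i\alpha}|\leq 1+\rho$.

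The core analytic step is to identify, for each position of the complex pair, the \emph{worst} elementary block containing $x_m$ and its neighbours, to maximize it exactly in the corresponding moduli and in $\alpha$, and to compare the four resulting maxima. Since $16M = 4\cdot 3^{15/2}/7^{7/2}$ involves the exponents $15/2$ and $7/2$, I expect the decisive block to bundle several factors whose joint optimum produces precisely $M$, with critical point algebraic and coordinates involving $\sqrt{3/7}$; I would locate it via the Lagrange conditions and verify by direct substitution. The complementary blocks should then contribute the outer factor $16$, as a product of the standard elementary bounds from the graphical scheme calculus.

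The main obstacle will be handling jointly the phase $\alpha$ and the neighbouring real moduli: no single sign pattern dominates, and the extremum is attained in the interior of the parameter region rather than on the boundary of the hypercube. A further subtlety is to confirm that no other position of the complex pair surpasses the worst block from the decisive case, which requires running the reduction in all four cases. Finally, for the sharpness assertion I would exhibit the explicit $5$-tuple $(y_1,\ldots,y_5)$ corresponding to the critical point and check by direct computation that $P_{5,1}=16M$ is attained there.
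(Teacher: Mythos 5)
Your setup matches the paper's in outline (moduli plus the phase variable $g=\cos(\arg y_k)$, the factor $2\sqrt{1-g^2}$, a case split over the position of the complex pair, which a symmetry $L_{5,k}(x_1,\ldots,x_3,g)=L_{5,5-k}(x_3,\ldots,x_1,g)$ reduces from four cases to two, and an explicit extremal point for sharpness). But the core step of your plan --- partition the ten factors into blocks, bound the purely real blocks by the standard graphical-scheme constants $2$ and $1$, bound one ``decisive'' block containing the complex factors by $M$, and multiply --- cannot produce the sharp constant $16M$. A product of blockwise maxima equals the global maximum only if every block is simultaneously extremal at a single point. Here the extremum (first ordering) is at $(x_1,x_2,x_3,g)=(1/\sqrt{7},-1,1,1/(2\sqrt{7}))$, an interior point in $x_1$ and $g$, where the elementary real-block bounds are not attained: for instance $(1-x_1)(1-x_1x_2)=6/7<1$ and $(1-x_1)(1-x_1x_2)\bigl(1-2x_1x_2x_3g+(x_1x_2x_3)^2\bigr)=54/49$, well below any bound assembled from the constants $2$ and $1$; only the single factor $1-x_2=2$ is extremal there. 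So any nontrivial partition of the kind you describe overshoots $16M$, and you are forced to optimize essentially the whole four-variable function exactly --- which is precisely why the paper reserves the block/graphical-scheme machinery for $n\ge 6$, where it yields only non-sharp bounds (e.g.\ $34.89$ in degree $6$ against the conjectured $32$).

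What the paper actually does in degree $5$ is an exact global optimization of the two orderings $L_{5,1}$ and $L_{5,2}$ over $[-1,1]^4$: for $L_{5,1}$ a lemma first forces $x_3=1$, then all boundary strata ($x_i=0$, $x_i=\pm1$, $g=0,\pm1$) are treated, and interior critical points are excluded by a ``resultant tree'' --- factor the partial derivatives (taking $\tfrac{\partial L}{\partial g}(1-g^2)-Lg$ in place of the $g$-derivative to avoid the square root), eliminate variables by iterated resultants, and isolate real roots with MAGMA/PARI; for $L_{5,2}$ the same scheme requires an extra case analysis because the resultants share a common factor. Nothing in your proposal supplies this interior analysis: Lagrange conditions on one hand-picked block do not rule out critical points of the full product, nor do they control the other ordering away from the boundary. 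As written, your plan would at best give an upper bound strictly larger than $16M$ and would not establish that $16M$ is the supremum.
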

\begin{theorem}\label{theoremDegreesHigher}
We have
\[
  P_{6,1} \leq 34.89,
  \quad
  P_{7,1} \leq 65.81,
  \quad
  P_{8,1} \leq 80,
  \quad
  P_{9,1} \leq 233.1.
\]
\end{theorem}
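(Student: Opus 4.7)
The plan is to adapt the graphical-scheme machinery that worked for the totally real case $P_{n,0}$ to the signature with exactly one complex pair. First I would fix the ordering $0<|y_1|\leq\cdots\leq|y_n|$ and note that the complex conjugate pair must occupy two consecutive positions $y_k,y_{k+1}=\bar y_k$ since they share the same modulus; letting $\theta$ denote the argument of $y_k$, every factor $|1-y_i/y_j|$ becomes either a real expression in the ratios $x_i\coloneqq y_i/y_{i+1}$ (as in the totally real case) or an expression additionally depending on $\theta$ whenever the index set $\{i,j\}$ straddles the pair. Passing to the graphical scheme, this means that $n-1$ cells (those in the row $k$ and column $k+1$ crossing the complex pair) get a new ``complex'' label, while the remaining cells carry a sign as in (\ref{graphScheme1}).

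Next I would set up a catalogue of local reduction inequalities. The strategy is the same as in the totally real case: identify small sub-patterns (triangles, rectangles, L-shapes) in the scheme whose product can be uniformly bounded by the product coming from a simpler pattern, so that any admissible scheme can be rewritten into a canonical normal form. Because the complex cells interact in a richer way with the sign cells, one must prove many more inequalities than the nine used for $P_{n,0}$; by the authors' count, more than 150 such local estimates are needed. For each one I would reduce to a polynomial or trigonometric inequality in the variables $x_i$ and $\theta$ over an explicit compact box, prove it by standard calculus where possible, and invoke verified symbolic-numeric computation on the remaining tight ones.

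With the catalogue in hand, the reduction scheme then produces, for each $n\in\{6,7,8,9\}$ and each admissible placement $k$ of the complex pair, a small list of canonical schemes that cannot be further reduced. For each canonical scheme I would decompose it into disjoint elementary blocks, each of which is bounded by an explicit constant coming from the catalogue (as is done in (\ref{graphScheme2}) for the real case, where the triangular blocks give $2$ and square blocks give $1$). Taking the product of these block bounds over all placements $k$ and all canonical schemes, and then the maximum over $k$, yields the desired numerical bound for $P_{n,1}$. The four numerical inequalities $34.89$, $65.81$, $80$, $233.1$ are then obtained just by plugging in $n=6,7,8,9$.

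The main obstacle is not the final bookkeeping but the middle step: proving the enlarged catalogue of local estimates, and in particular certifying those that are sharp or nearly sharp. Whereas in the totally real case each local inequality was either elementary or followed from a single convexity argument, the presence of the angular parameter $\theta$ makes several of the reductions genuinely two-variable optimization problems on non-convex domains, where naive bounds lose too much. I would expect a substantial part of the work to consist in subdividing the $(x_i,\theta)$-domain carefully and combining analytic monotonicity arguments with interval-arithmetic verification, so that the resulting constants are tight enough to yield the stated numerical bounds rather than weaker ones.
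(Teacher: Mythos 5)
Your overall frame does match the paper's: a change of variables that isolates an angular parameter ($g=\cos(\arg y_k)$ in the paper), graphical schemes whose cells crossing the complex pair carry a special mark together with the extra factor $2\sqrt{1-g^2}$, one ``ordering'' per placement of the complex pair (with a symmetry halving the placements), a catalogue of roughly $150$ estimates, and block decompositions. The genuine gap is in your middle step: you assume that, as in the totally real case, local rewriting inequalities reduce every sign configuration to a canonical normal form, after which the four constants ``are obtained just by plugging in $n$''. That reduction is precisely what fails here. The analogues of the H- and S-moves acting on a $g$-marked cell are false — for instance $(1-x)(1+2xyg+(xy)^2)\le(1+x)(1-2xyg+(xy)^2)$ does not hold throughout $[0,1]^3$ — so only a few dynamical moves survive (Lemma~\ref{lemmaDynamicalNew}), there is no canonical scheme, and each of the $2^{n-2}$ configurations of each ordering has to be bounded individually: mostly by static block estimates, often recursively by stripping the first row or last column and invoking a previously bounded lower-degree configuration.

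Moreover, a bare product of block bounds is not tight enough to reach the stated constants, in particular $P_{8,1}\le 80$, which is exactly what the application to signature $(6,1)$ requires. The paper needs ad hoc refinements for the hardest configurations: e.g.\ for $L_{8,4}$ with signs $(+,-,-,-,-,-)$ it uses a two-block decomposition whose blocks are optimized by six-variable resultant trees, plus a split of the range of $g$ (or of some $x_i$ elsewhere) so the two blocks cannot peak simultaneously, improving $9.482\cdot 8.641\approx 82$ to $\max(9.482\cdot 8.25,\,8\cdot 8.641)\le 78.3$. The final numbers are the maxima over orderings and configurations of these individually computed bounds (the table in Section~\ref{sectionFinalRemarks}); there is no formula in $n$. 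Finally, your plan to certify the tight local estimates by domain subdivision and interval arithmetic is a different verification route from the paper's exact elimination via iterated resultants (MAGMA factorization, PARI real-root isolation, and a case analysis for the nontrivial common factors shared by the resultants); it might be made rigorous, but as written your proposal neither establishes the catalogue nor shows that the surviving reductions suffice, so the decisive steps remain unproved.
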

\noindent %
The results of Theorems~\ref{theoremDegree5} and~\ref{theoremDegreesHigher} constitute meaningful
improvements with respect to all their previous bounds. In particular, the estimate for $P_{8,1}$ allows
to detect the fields of signature $(6,1)$ with smallest regulator.
\begin{corollary}\label{corollaryMinReg}
The first four fields with signature $(r_1,r_2)=(6,1)$ ordered by increasing value of their regulator are
the fields
\[
\begin{array}{cl}
K_1: & x^8 - 2x^7 -  x^6 +   x^5 -  2x^4 +  4x^3 +  3x^2 - 2x - 1 \\[2pt]
K_2: & x^8 - 2x^7 - 3x^6 + 10x^5 -  2x^4 - 11x^3 +  5x^2 + 2x - 1 \\[2pt]
K_3: & x^8        - 9x^6 -  8x^5 + 11x^4 + 21x^3 + 17x^2 + 7x + 1 \\[2pt]
K_4: & x^8        -  x^6 -  3x^5 -  3x^4 +  6x^3 +  4x^2 - 2x - 1
\end{array}
\]
\noindent %
with $R_{K_1}=7.135\ldots$, $R_{K_2} = 7.380\ldots$,  $R_{K_3} = 7.414\ldots$ and $R_{K_4} = 7.430\ldots$.
\end{corollary}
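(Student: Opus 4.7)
The strategy is the one introduced by Astudillo, Diaz y Diaz and Friedman~\cite{regulators} and refined by Friedman--Ramirez-Raposo~\cite{RamirezRaposo}: Remak's inequality~\cite{remak} bounds $|d_K|^{1/2}$ by essentially $R_K$ times a quantity that, after the change of variables $x_i\coloneqq y_i/y_{i+1}$, is controlled by $P_{n,r_2}$ evaluated on an ordered $n$-ple constructed from the Minkowski image of a system of fundamental units. Combined with the analytic lower bound on $R_K$ produced by the explicit formulas for $\zeta_K$~\cite{friedmanAnalyticRegulator}, the assumption $R_K\leq C$ therefore yields an explicit upper bound $D(C)$ for $|d_K|$.

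I would apply this machinery with $n=8$, $r_2=1$, and $C$ taken just above $R_{K_4}=7.430\ldots$ (e.g.\ $C=7.431$), feeding in the new estimate $P_{8,1}\leq 80$ from Theorem~\ref{theoremDegreesHigher}. Because $80$ is much smaller than the classical Hadamard-type bound $n^{n/2}=8^{4}=4096$, the resulting constant $D(C)$ drops into a range where the set of candidate fields of signature $(6,1)$ with $|d_K|\leq D(C)$ is finite and manageable; this is precisely the reason why the signature $(6,1)$ was out of reach before Theorem~\ref{theoremDegreesHigher}.

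The remaining step is a finite verification. One enumerates all number fields of signature $(6,1)$ with $|d_K|\leq D(C)$, either by consulting the existing databases~\cite{lmfdb, klunersMalle, jonesRoberts, MegrezTables} or, for the portion of the range not yet tabulated, by running a Hunter--Pohst--Martinet polynomial search (bounding the elementary symmetric functions of the roots via Newton's inequalities, sieving non-irreducible or reducible candidates, and removing isomorphic duplicates). For each surviving field one computes $R_K$ in PARI/gp and sorts the list by increasing regulator, recovering $K_1,\ldots,K_4$ with the claimed numerical values. The main obstacle in carrying this out is exactly the last point: certifying completeness of the enumeration up to $D(C)$, since any gap in the polynomial search (or mismatch with the discriminant bound from the analytic lower bound on $R_K$) could hide a field with regulator smaller than $R_{K_4}$; this is why sharpening $P_{8,1}$ as far as $80$ was essential, so that $D(C)$ stays within computational reach and the enumeration can be fully certified.
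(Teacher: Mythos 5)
Your overall framework (Remak-type inequality with the new bound $P_{8,1}\leq 80$, the analytic lower bound for $R_K$, then a finite check) is the right one, but two essential steps of the actual argument are missing, and one of your steps would fail as described. First, the discriminant bound you call $D(C)$ is \emph{not} within reach of enumeration: the Remak/Hermite chain (Propositions~\ref{prop1} and~\ref{prop2} with $R_0=7.431$ and $P_{8,1}\leq 80$) only gives $\log|d_K|\leq 36.0795\ldots$, i.e. $|d_K|\lesssim 4.7\cdot 10^{15}$, and even after one application of the analytic lower bound (Proposition~\ref{prop3}, factor $2$) the range only shrinks to $|d_K|\leq (19.6)^8\approx 2.1\cdot 10^{10}$. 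No Hunter--Pohst--Martinet search or database covers signature $(6,1)$ anywhere near that far; the existing complete table stops at $79259702$. The paper closes this gap not by enumeration but by a \emph{second} application of Proposition~\ref{prop3}, in its sharper form with factor $4$ (legitimate because $(19.6)^8\leq\delta$ with $\delta^3$ a discriminant lower bound in degree $24$, signature $(18,3)$): since $4g_{6,1}(1/79259702)=7.487\ldots>7.431$, every field with $79259702<|d_K|\leq(19.6)^8$ already has $R_K>7.431$, so the candidates are exactly the eight fields in the known list. Your proposal leaves precisely this step (``certifying completeness of the enumeration'') as an acknowledged obstacle, but that obstacle is the heart of the proof and cannot be overcome by a polynomial search in this range.

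Second, you apply Proposition~\ref{prop2} unconditionally, but its hypothesis is that $K$ is generated by a unit of minimal nonzero logarithmic length; this is automatic only for primitive $K$. For non-primitive fields of signature $(6,1)$ the minimal-length unit may generate a proper subfield, so inequality~\eqref{EstimatePrimitive} is not guaranteed, and the paper must run a separate argument using the relative Remak inequality and tower estimates (Propositions~\ref{prop5} and~\ref{prop6}), splitting into the three possible subfield configurations (a quartic totally real subfield, a real quadratic subfield, or a tower $\Q\subset F\subset L\subset K$) and optimizing via Lemma~3 of the reference to get $\log|d_K|\leq 35.70\ldots$ before returning to the same list of eight fields. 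Without this, you cannot rule out a non-primitive field of large discriminant with regulator below $7.431$, and note that $K_3$ itself is non-primitive. Finally, a smaller but real point: the PARI/GP regulator values are a priori conditional on GRH and could be integer multiples of the true regulator; the paper certifies them unconditionally via Proposition~\ref{prop4}, a verification your proposal omits.
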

\begin{corollary}\label{corollaryMinRegNonPrim}
The non-primitive field with signature $(r_1,r_2)=(6,1)$ and smallest regulator is the field $K_3$ of the
list in Corollary~\ref{corollaryMinReg}.
\end{corollary}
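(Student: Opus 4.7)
The plan is to deduce the statement directly from Corollary~\ref{corollaryMinReg}. Since that result already provides the complete ordered list of the four signature $(6,1)$ fields with smallest regulators, any non-primitive field of signature $(6,1)$ with regulator strictly smaller than $R_{K_3}$ would have to coincide with $K_1$ or $K_2$. Hence the proof reduces to establishing two facts: the fields $K_1$ and $K_2$ are primitive, i.e.\ admit $\mathbb{Q}$ as their only proper subfield, while $K_3$ contains a non-trivial intermediate subfield.

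To verify both claims, I would compute the subfield lattice of each $K_i$. Since $[K_i:\mathbb{Q}] = 8$, the only possible degrees of proper intermediate subfields are $2$ and $4$, so the enumeration is finite and short; this can be carried out with standard algorithms (for instance PARI/GP's \texttt{nfsubfields}, or the analogous routines in Magma or SageMath). For $K_3$, I expect that one can moreover exhibit an explicit generator of a proper subfield as a polynomial in a root of the given defining polynomial, providing an independent algebraic witness of non-primitivity that can be verified by hand: it suffices to display some $\alpha \in K_3$ whose minimal polynomial has degree $2$ or $4$ and factors the defining polynomial of $K_3$ accordingly. Conversely, for $K_1$ and $K_2$ the non-existence of such an $\alpha$ follows from the computed subfield lattices.

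Combining these subfield computations with the regulator ordering $R_{K_1}<R_{K_2}<R_{K_3}$ supplied by Corollary~\ref{corollaryMinReg} then yields the claim, since any candidate non-primitive field with smaller regulator would have to be $K_1$ or $K_2$, which have just been shown to be primitive. The main obstacle is not conceptual but computational: one must trust (and ideally cross-check with an independent computer algebra system, or verify by producing explicit subfield generators) the output of the subfield-lattice routine. Given the moderate degree and the fact that the defining polynomials are explicit and of small height, this verification is entirely routine.
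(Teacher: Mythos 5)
Your deduction is logically sound: granting Corollary~\ref{corollaryMinReg} as stated (a completeness claim over \emph{all} fields of signature $(6,1)$, primitive or not), any non-primitive field with regulator below $R_{K_3}$ would have to be $K_1$ or $K_2$, so the corollary reduces to the two routine computational facts that $K_1,K_2$ are primitive and $K_3$ is not, both checkable by a subfield-lattice computation (and indeed consistent with the paper, which records that $K_1,K_2,K_4$ are primitive and $K_3$ is not). This is, however, a genuinely different route from the paper's. In the paper, Corollaries~\ref{corollaryMinReg} and~\ref{corollaryMinRegNonPrim} are proved \emph{simultaneously}: the bound~\eqref{EstimatePrimitive} obtained from Propositions~\ref{prop1} and~\ref{prop2} is only valid when a unit of minimal logarithmic length generates $K$, hence only covers primitive fields, and the non-primitive case is handled separately via the relative Remak inequality (Proposition~\ref{prop5}) and the tower bounds on logarithmic lengths (Proposition~\ref{prop6}), split into the three subfield configurations a), b), c), optimized via Lemma~3 of~\cite{regulators}, and then fed into the analytic lower bound of Proposition~\ref{prop3} to shrink the discriminant range back to the known list of eight fields, of which $K_3$ is the unique non-primitive one with $R_K\le 7.431$. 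So your shortcut is valid only because Corollary~\ref{corollaryMinReg} is stated unconditionally, and the heavy lifting that rules out non-primitive fields of large discriminant is entirely hidden inside its proof — which is precisely the analysis the paper presents as the proof of the present corollary. Your version buys brevity and an independently verifiable certificate (explicit subfield generators of $K_3$, empty subfield lattices for $K_1,K_2$); the paper's version is what actually makes the completeness claim true in the first place.
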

The new bound for $P_{5,1}$, $P_{6,1}$ and $P_{7,1}$ allows to improve the known lists of fields ordered
according to their regulator in the corresponding signatures. The new lists are described in the next
corollaries.
\begin{corollary}\label{corollaryDeg5}
There exist $40$ fields $K$ with signature $(r_1,r_2){=}(3,1)$ satisfying $R_K\leq 2.15$ and they all
have $|d_K| \leq 25679$.
\end{corollary}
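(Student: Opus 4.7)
The plan is to follow the Astudillo--Diaz~y~Diaz--Friedman scheme described in the introduction, but with the new upper bound for $P_{5,1}$ furnished by Theorem~\ref{theoremDegree5} as the key quantitative input. First, I would recall Remak's inequality in the form tailored to the signature $(3,1)$: it expresses $|d_K|$ as a product of $R_K^2$, an explicit geometric factor in $n=4$, and a quantity which, after the standard change of variables $x_i = y_i/y_{i+1}$, is bounded by $\sup P_{5,1}$ evaluated on the hypercube corresponding to conditions A) and B). (Here we actually need $P_{5,1}$ because the formulation in Remak's inequality produces an $(r_1+r_2+1)=5$ tuple of values sharing the complex-conjugation constraint.) Substituting $R_K \leq 2.15$ and $P_{5,1} \leq 16\,M$ from Theorem~\ref{theoremDegree5}, and combining with Friedman's analytic lower bound for $R_K$ from~\cite{friedmanAnalyticRegulator} to eliminate the residual dependence on the regulator, this yields an explicit numerical ceiling $B$ for $|d_K|$.

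Next I would enumerate all number fields of signature $(3,1)$ with $|d_K|\leq B$. This is a well-travelled computation: depending on the size of $B$, one either reads off the list from the LMFDB/Jones--Roberts tables~\cite{lmfdb,jonesRoberts} (which are complete for $(3,1)$ well beyond the range needed here) or, if $B$ turns out to exceed the tabulated range, one fills the gap by a direct Hunter--Pohst--Martinet search over defining polynomials with bounded $T_2$-trace. For every candidate field in the list I would then compute $R_K$ using \texttt{PARI/GP}'s standard routines and keep only those satisfying $R_K\leq 2.15$. According to the statement, exactly $40$ such fields survive, and a posteriori all of them have $|d_K|\leq 25679$, a value substantially smaller than the a priori ceiling~$B$.

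The main obstacle is purely the size of the search, not the logic: although Theorem~\ref{theoremDegree5} drastically cuts the previous bound on $P_{5,1}$ (from $5^{5/2}=55.90\ldots$ to $16.69\ldots$), the resulting bound $B$ obtained from Remak's inequality still grows like the fourth or eighth power of $\sup P_{5,1}$, so $B$ will still be many orders of magnitude larger than $25679$. Thus the practical work is to push the enumeration of signature-$(3,1)$ fields far enough, and to verify that nothing has been missed in the intermediate discriminant range not previously covered. A minor check also has to be performed at the boundary $R_K = 2.15$ to confirm that no field has regulator exactly equal to, or numerically indistinguishable from, the threshold, so that the count $40$ is rigorous; this is handled by computing $R_K$ with certified precision comfortably exceeding the gap between the $40$-th smallest regulator and the next one.
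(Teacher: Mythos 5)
Your outline matches the paper's general scheme (Remak-type bound with the new $P_{5,1}\leq 16M$, then enumeration and PARI/GP regulator computations), but it omits the step that actually makes the proof work: the analytic \emph{lower} bound for the regulator, Proposition~\ref{prop3} / inequality~\eqref{lowerBoundRegulator}. In the paper, Propositions~\ref{prop1} and~\ref{prop2} with $R_0=2.15$ (the hypothesis of Proposition~\ref{prop1} being automatic here since $5$ is prime) give $|d_K|\leq\exp(16.882140\ldots)\approx 2.2\cdot 10^{7}$, and then one checks $2g_{3,1}(\exp(-16.882140\ldots))>2.15$, $2g_{3,1}(1/(13.136)^5)>2.15$ and, since $48000<(13.136)^5=\delta$, the factor-$4$ refinement $4g_{3,1}(1/48000)=2.157\ldots>2.15$; this shows every field with $48000<|d_K|\leq\exp(16.882\ldots)$ has $R_K>2.15$ and collapses the search to the $145$ fields with $|d_K|\leq 48000$, where LMFDB is complete. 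You instead propose to enumerate \emph{all} $(3,1)$ quintics up to the full Remak ceiling $B$, conceding that $B$ is many orders of magnitude beyond $25679$ and waving at a Hunter--Pohst--Martinet search to fill any gap in the tables. That is precisely the obstruction the Astudillo--Diaz~y~Diaz--Friedman method is designed to avoid (the introduction notes that Remak's bound alone is ``too big to also permit a real description of all the fields''), and your plan neither establishes completeness of tables up to $B$ nor carries out the search, so as written the corollary is not proved. Note also that you invoke Friedman's lower bound in the wrong place (``to eliminate the residual dependence on the regulator'' when deriving $B$ — after substituting $R_K\leq 2.15$ there is no such residual dependence); its real role is the range reduction just described.

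A second, smaller gap: the PARI/GP regulator is a priori conditional and may be an integer multiple $\tilde R_K=mR_K$ of the true value; the paper certifies $\tilde R_K=R_K$ for the candidate fields via Proposition~\ref{prop4}. Your ``certified precision at the boundary $R_K=2.15$'' addresses floating-point accuracy but not this issue: a field with true regulator $\leq 2.15$ could be reported with $m\geq 2$ above the threshold and wrongly discarded, so the count of $40$ would not be rigorous without the Proposition~\ref{prop4} check.
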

\begin{corollary}\label{corollaryDeg6}
There exist $136$ fields $K$ with signature $(r_1,r_2){=}(4,1)$ satisfying $R_K\leq 4.60$ and they
all have $|d_K|\leq 712603$.
\end{corollary}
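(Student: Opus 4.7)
The plan is to combine the new bound $P_{6,1} \leq 34.89$ from Theorem~\ref{theoremDegreesHigher} (applied with $n=6$ and $r_2=1$, which matches the signature $(4,1)$) with Remak's inequality in the form used in~\cite{regulators}. Specifically, Remak's inequality bounds $|d_K|$ by an explicit expression involving $R_K$, the degree $n$, the signature $(r_1,r_2)$, and the supremum of $P_{n,r_2}$ on the appropriate domain; substituting $n=6$, $r_2=1$, $R_K \leq 4.60$ and the new value $34.89$ in place of the previous (larger) upper bound on $P_{6,1}$ yields a concrete upper bound on $|d_K|$, which a direct numerical evaluation shows is $\leq 712603$. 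This is the single analytic input of the corollary.

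Next I would perform the exhaustive search. All fields $K$ with signature $(4,1)$ and $|d_K|\leq 712603$ can be retrieved from existing tables of number fields (LMFDB~\cite{lmfdb}, the Jones--Roberts database~\cite{jonesRoberts}, and PARI's \texttt{nflist}/Megrez tables~\cite{MegrezTables}), which are complete in this signature at least up to this discriminant bound. For each such field, one computes the regulator with PARI/GP (using \texttt{bnfinit} at a sufficient precision and \texttt{bnfcertify} to guarantee correctness of the class group and regulator unconditionally), filters out those with $R_K \leq 4.60$, and collects the resulting list. A final count gives $136$ fields, and by construction each one satisfies $|d_K|\leq 712603$, which establishes both assertions of the corollary.

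The main obstacle is not conceptual but practical: the discriminant bound $712603$, while a large improvement over what the classical estimates $P_{6,1}\leq 6^{6/2} = 216$ or Bertin's bound would provide, still leads to a nontrivial number of candidate fields (several tens of thousands in signature $(4,1)$), so care is required both to ensure completeness of the search and to certify the regulators rigorously rather than merely to the default precision. A secondary, more subtle issue is to make sure that the Remak-type inequality used is exactly the version for which the constant $P_{n,r_2}$ enters linearly, so that the new bound from Theorem~\ref{theoremDegreesHigher} translates cleanly into the stated discriminant cutoff; one should also verify that the CM case is ruled out in signature $(4,1)$ (which it is, since CM fields are totally complex), so that Remak's finiteness conclusion applies to every field in the list.
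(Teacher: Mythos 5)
There is a genuine gap, and it is the central one: the cutoff $712603$ is not what the Remak-type inequality delivers. With $P_{6,1}\leq 34.89$ and $R_K\leq 4.60$, Propositions~\ref{prop1} and~\ref{prop2} give $\log|d_K|\leq 24.666347\ldots$, i.e. $|d_K|$ up to roughly $5\cdot 10^{10}$ — far beyond any range in which tables of signature-$(4,1)$ sextic fields are complete, so your ``exhaustive search over existing tables up to $712603$'' cannot be justified this way. In the paper, $712603$ is an \emph{output} of the computation (the largest discriminant among the $136$ fields eventually found), not an a priori bound; treating it as the analytic input makes the argument circular. The step you are missing is the analytic lower bound for the regulator (Proposition~\ref{prop3}, Friedman's $R_K\geq 2g_{r_1,r_2}(1/|d_K|)$, with the factor $4$ refinement below the threshold $\delta=(15.536)^6$): the paper checks $2g_{4,1}(\exp(-24.666347\ldots))>4.60$, $2g_{4,1}(1/(15.536)^6)>4.60$ and $4g_{4,1}(1/1300000)>4.60$, which excludes all discriminants between $1300000$ and the Remak bound, reducing the search to the $613$ known fields with $|d_K|\leq 1300000$, on which the regulator computation (certified via Proposition~\ref{prop4}) yields the $136$ fields.

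A second gap: since $6$ is composite, you cannot apply Proposition~\ref{prop2} (and hence the Remak bound in the form you describe) without knowing that $K$ is generated by a unit of minimal logarithmic length; the unit realizing the first minimum may generate a proper subfield (a real quadratic or a totally real cubic one). The paper handles this with the relative inequality of Proposition~\ref{prop5}, the lower bounds of Proposition~\ref{prop6}, and the optimization from Lemma~3 of~\cite{regulators}, obtaining the three case-by-case bounds $\exp(24.666347\ldots)$, $\exp(23.93583\ldots)$, $\exp(19.70966\ldots)$. Ruling out the CM case, as you suggest, addresses only Remak's finiteness statement and does not substitute for this subfield analysis. Your idea of certifying regulators with \texttt{bnfcertify} instead of the Proposition~\ref{prop4} criterion is a legitimate variant, but it does not repair the two gaps above.
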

\begin{corollary}\label{corollaryDeg7}
There exist $59$ fields $K$ with signature $(r_1,r_2)=(5,1)$ satisfying $R_K\leq 6.10$ and they all
have $|d_K|\leq 7495927$.
\end{corollary}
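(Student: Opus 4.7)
The plan is to invoke the classification scheme of Astudillo, Diaz y Diaz and Friedman~\cite{regulators}, now fed with the sharpened bound $P_{7,1}\leq 65.81$ from Theorem~\ref{theoremDegreesHigher}. Since signature $(5,1)$ is never CM (as $r_1=5\neq 0$), Remak's inequality applies directly and produces an upper bound for $|d_K|$ as a function of $R_K$ and of $\sup P_{7,1}$. Substituting the new estimate for $P_{7,1}$ into Remak's inequality gives an explicit inequality of the form $|d_K|\leq C\, R_K^{2}$ with an effectively computable constant $C$ depending only on the signature $(5,1)$ and on the bound $65.81$, and evaluating this at $R_K=6.10$ already gives an explicit upper bound for $|d_K|$.

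To sharpen the resulting bound to the stated value $|d_K|\leq 7495927$, I would combine the Remak-type inequality just obtained with Friedman's lower bound for $R_K$ deduced from the explicit formulas for the Dedekind zeta function~\cite{friedmanAnalyticRegulator}: this lower bound is an increasing function of $|d_K|$, so imposing $R_K\leq 6.10$ simultaneously in both inequalities traps $|d_K|$ in a finite interval. A direct numerical resolution of the resulting coupled inequalities, specialized to $r_1=5$ and $r_2=1$, yields precisely $|d_K|\leq 7495927$.

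Once the search is reduced to a finite range, I would enumerate all number fields of signature $(5,1)$ with $|d_K|\leq 7495927$ by querying the available tables~\cite{lmfdb,klunersMalle,MegrezTables,jonesRoberts} and, where these do not reach the required height, by complementing them with Hunter-Pohst style searches to fill in the missing discriminants. For each candidate field the regulator is computed with PARI/GP, and those with $R_K\leq 6.10$ are retained; the count must come out to $59$, giving the corollary. The main obstacle is computational rather than conceptual: ensuring exhaustiveness of the enumeration up to $\sim 7.5\cdot 10^{6}$ in signature $(5,1)$, a range in which existing databases may not be fully complete, so that a careful Hunter-Pohst sweep and a cross-check against the tabulated data are the most delicate steps. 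The analytic inputs become routine once the new control on $P_{7,1}$ is available.
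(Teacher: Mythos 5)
Your overall skeleton (Remak-type inequality fed with the new bound $P_{7,1}\leq 65.81$, Friedman's analytic lower bound for the regulator, reduction to a finite discriminant range, then explicit regulator computations) is the same as the paper's, but two of your key steps do not work as stated. First, the number $7495927$ is \emph{not} obtainable from the analytic inequalities: it is an a posteriori computational fact, namely the largest discriminant among the $59$ fields that survive the final regulator check. The analytic chain only does the following: Proposition~\ref{prop1} combined with Proposition~\ref{prop2} (which is applicable because $7$ is prime, so $K$ is primitive and is generated by the unit of minimal logarithmic length --- your appeal to non-CM alone does not cover this point) gives $\log|d_K|\leq 30.5497\ldots$, i.e.\ roughly $1.85\cdot 10^{13}$; then Proposition~\ref{prop3}, including its refined factor-$4$ form with the auxiliary threshold $\delta=(17.686)^7$ coming from discriminant lower bounds for degree-$21$ fields of signature $(15,3)$, shrinks the range to $|d_K|\leq 14500000$. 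Only the explicit PARI/GP computation on the $294$ fields of the (complete) LMFDB list below that bound reveals that the $59$ fields with $R_K\leq 6.10$ all happen to satisfy $|d_K|\leq 7495927$. Consequently your plan to enumerate fields only up to $7495927$ is circular: you would be using the conclusion to fix the search range, and you would miss the needed verification that no field with discriminant between $7495927$ and the analytically certified bound has small regulator.

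Second, your ``trapping'' argument relies on the claim that Friedman's lower bound is an increasing function of $|d_K|$, which is false: the function $2g_{5,1}(1/|d_K|)$ is not monotone and eventually becomes negative for large discriminants (this is precisely why the method collapses in signature $(7,1)$, as discussed in the paper's final section). This is why Proposition~\ref{prop3} demands checking the bound at \emph{both} endpoints $d_1,d_2$ and why the factor-$4$ variant with $\delta$ is essential to get below $(17.686)^7$; without these ingredients the coupled inequalities do not close. Two smaller points: the Remak--Pohst chain does not give an inequality of the shape $|d_K|\leq C\,R_K^{2}$ but rather $\log|d_K|\leq 2\log(65.81)+c\,R_K^{1/5}$ with $c$ built from the Hermite constant $\gamma_5$; and the PARI/GP regulators are a priori conditional and only known up to an integer multiple, so they must be certified, as the paper does, via Proposition~\ref{prop4} --- a step your proposal omits. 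The Hunter--Pohst sweep you envisage is unnecessary here since the LMFDB tables are complete up to the required bound, though it would be a legitimate (if costly) substitute.
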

\indent %
The first author conjectured in~\cite{battistoniCOnjectural} the maximum for the functions studied in
Theorems~\ref{theoremDegree5} and~\ref{theoremDegreesHigher}. In particular, Conjecture~3 appearing there
states that it should satisfy an iterative behaviour as the degree $n$ increases, similarly to what
happens for the totally real case. For the $r_2 = 1$ case the conjecture states that
\[
  P_{n,1} \leq \begin{cases}
                 2^{\frac{n+3}{2}}\, M& n \text{ odd }\geq 5, \\
                 2^{\frac{n+4}{2}}    & n\text{ even }\geq 6.
               \end{cases}
\]
Theorem~\ref{theoremDegree5} proves the conjecture for the case $P_{5,1}$ and the values for $P_{6,1}$
and $P_{8,1}$ in Theorem~\ref{theoremDegreesHigher} are quite close to the conjectural ones.
\smallskip

Here we give a brief description of the next sections of the paper. %
In Section~\ref{sectionProofCoro} we derive the corollaries from the results of
Theorems~\ref{theoremDegree5} and~\ref{theoremDegreesHigher}, and in doing so we recall the steps for the
classification method of number fields with small regulator. %
Section~\ref{SectionTotReal} recalls the framework in which the result for $P_{n,0}$ was obtained,
including the notion of graphical schemes and some basic estimates. %
Section~\ref{sectionChangeVar} introduces the change of variables which allows to study the problem for
the functions $P_{n,1}$ and the idea of ordering of these functions. %
Section~\ref{sectionProofTheorem} presents the proof of Theorem~\ref{theoremDegree5} and the strategy
based on iterated resultants that we used for it. %
Section~\ref{sectionSchemesForTheorem2} illustrates how graphical schemes are adapted in the new context
of fields with one complex embedding and the several results and difficulties that are encountered whenever
one tries to estimate them. %
Section~\ref{sectionTechnicalRemarks} lists some technical and computational remarks regarding the
procedures we used for the estimate of these graphical schemes and the dataset associated. %
Finally, Section~\ref{sectionFinalRemarks} presents some considerations about the upper bounds we found
and their possible improvement, including a discussion about the classification of number fields with
signature $(7,1)$ and small regulator.
\medskip\\
{\bf Acknowledgments:} %
We thank the anonymous referee for their careful reading. We also thank Francesco Fichera and Luca
Galizzi for their constant assistance with the software packages we have used for this work. This project
has been accomplished during the period the first author spent at the Universit\`{a} di Milano; his research
was founded with a grant from the Italian National Research Projects (PRIN) 2017. The first author is
member of GNAMPA research group, the second author of GNSAGA research group.

\section{Proof of the corollaries}\label{sectionProofCoro}
\subsection{Proof of Corollaries~\ref{corollaryMinReg} and~\ref{corollaryMinRegNonPrim}}
We refer to Sections~2 and~3 of~\cite{regulators} for the proof of the propositions of this section.
\begin{proposition}\label{prop1}
Let $K=\Q(\epsi)$ be a field of degree $n$ and signature $(r_1,r_2)$ with $\epsi\in\mathcal{O}_K^*$. Let
$m_K(\epsi)$ be the length of $\epsi$ in the logarithmic lattice of the units of $K$. Let
$\epsi_1,\ldots,\epsi_n$ be the conjugates of $\epsi$ ordered in increasing absolute value. Then
\[
 \log |d_K| \leq 2\log (P_{n,r_2}(\epsi_1,\ldots,\epsi_n)) + m_K(\epsi)\cdot \sqrt{\frac{n^3-n-4r_2^3-2r_2}{3}} =: U_0.
\]
\end{proposition}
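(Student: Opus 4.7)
The natural starting point is the classical inequality $|d_K|\leq|\operatorname{disc}(f_\epsi)|$, where $f_\epsi$ is the minimal polynomial of $\epsi$: because $\epsi\in\mathcal{O}_K$ generates $K=\Q(\epsi)$, the index $[\mathcal{O}_K:\Z[\epsi]]$ is a positive integer, and $[\mathcal{O}_K:\Z[\epsi]]^2\cdot|d_K|=|\operatorname{disc}(f_\epsi)|=\prod_{i<j}|\epsi_i-\epsi_j|^2$. The ordering assumption $|\epsi_1|\leq\cdots\leq|\epsi_n|$ lets me factor $|\epsi_i-\epsi_j|=|\epsi_j|\cdot|1-\epsi_i/\epsi_j|$ for $i<j$, and collecting these identities over all pairs produces the factorization
\[
  \prod_{i<j}|\epsi_i-\epsi_j|=\Bigl(\prod_{j=1}^n|\epsi_j|^{j-1}\Bigr)\cdot P_{n,r_2}(\epsi_1,\ldots,\epsi_n).
\]
Taking logarithms and doubling, the proposition reduces to showing that $2\sum_{j=1}^n (j-1)\log|\epsi_j|\leq m_K(\epsi)\cdot\sqrt{(n^3-n-4r_2^3-2r_2)/3}$.

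To establish this remaining inequality I would use that $\epsi$ is a unit, so $\sum_{j=1}^n\log|\epsi_j|=0$, and subtract a multiple of this identity to replace each $(j-1)$ by the centred coefficient $c_j\coloneqq(2j-n-1)/2$, which satisfies $\sum_j c_j=0$. The sum $\sum_j c_j\log|\epsi_j|$ is then naturally interpreted as an inner product in the logarithmic lattice of $\mathcal{O}_K^*$: each complex conjugate pair has equal absolute value and therefore occupies two adjacent positions in the ordering, so pairing the contributions yields a linear combination of the $r_1+r_2$ independent logarithms $\log|\sigma(\epsi)|$. Applying Cauchy--Schwarz in this ambient Euclidean space (with the Minkowski weights, so that complex embeddings carry weight $2$) and using the unit-norm relation to remove the all-ones direction from the coefficient vector produces an upper bound of the shape $m_K(\epsi)\cdot D(n,r_2,\mathcal{C})$, where $D$ depends on the configuration $\mathcal{C}$ of positions occupied by the $r_2$ complex pairs.

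The step I expect to be the main obstacle is controlling $D(n,r_2,\mathcal{C})$ uniformly in $\mathcal{C}$ by $\sqrt{(n^3-n-4r_2^3-2r_2)/3}$. Using the closed form $c_{j_1}+c_{j_1+1}=2j_1-n$ for a pair at adjacent positions, the squared quantity $D(n,r_2,\mathcal{C})^2$ is an explicit polynomial in the positions $\{j_1^{(k)}\}_{k=1}^{r_2}$, and I would combine a convexity argument with the symmetry $j\mapsto n+1-j$ to localise its maximum at a canonical centred placement of the pairs. Evaluating the sum $\sum_{j=1}^n c_j^2=n(n^2-1)/12$ familiar from the totally real case together with the correction produced by the complex pairs at this extremal configuration is what yields the claimed cubic-in-$r_2$ term $4r_2^3+2r_2$. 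Combining the resulting inequality with the bound on $\log|d_K|$ derived above then delivers $U_0$.
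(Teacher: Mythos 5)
Your proposal is correct and follows essentially the same route as the paper's source: the paper gives no proof of Proposition~\ref{prop1}, deferring to Sections~2--3 of Astudillo--Diaz y Diaz--Friedman, and the chain you describe (bounding $|d_K|$ by the discriminant of the minimal polynomial of $\epsi$, factoring out $\prod_j|\epsi_j|^{j-1}$, centring the coefficients via the unit relation $\sum_j\log|\epsi_j|=0$, then Cauchy--Schwarz in the logarithmic embedding with weight $2$ at complex places) is exactly the classical Remak-type argument used there. The one step you leave schematic does work out as you predict: with the normalisation in which the totally real squared constant is $4\sum_j c_j^2=(n^3-n)/3$, a conjugate pair occupying positions $(j,j+1)$ lowers this squared sum by $(2j-n)^2+2$, and since distinct pair-starts differ by at least $2$ (so the values $2j-n$ by at least $4$), the minimal total loss over all placements is at least $(4r_2^3+2r_2)/3$, with equality for the central placement when parity permits, which yields precisely the stated constant $\sqrt{(n^3-n-4r_2^3-2r_2)/3}$.
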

\begin{proposition}\label{prop2}
Assume $K=\Q(\epsi)$ as above and that $\epsi$ is such that $m_K(\epsi)$ is the minimum non-zero length
in the logarithmic lattice. Let $r\coloneqq r_1 + r_2 -1$. Then
\[
 m_K(\epsi) \leq \left(\sqrt{r+1} R_K \gamma_r^{r/2}\right)^{1/r}
\]
where $\gamma_j$ is the Hermite constant of dimension $j$.
\end{proposition}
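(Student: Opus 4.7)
The plan is to deduce the inequality by applying the definition of the Hermite constant to the lattice of logarithms of units inside the appropriate trace-zero hyperplane of $\R^{r_1+r_2}$. Recall that $\gamma_r$ is, by definition, the smallest constant such that every rank-$r$ lattice $\Lambda$ in $\R^r$ contains a nonzero vector $v$ with $\|v\|^2 \le \gamma_r\, \mathrm{covol}(\Lambda)^{2/r}$; equivalently, $\|v\| \le \gamma_r^{1/2}\,\mathrm{covol}(\Lambda)^{1/r}$. So everything reduces to (i) identifying $m_K(\epsi)$ with the length of a vector in a suitable lattice, and (ii) computing the covolume of that lattice in terms of $R_K$.

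First I would recall the logarithmic embedding
\[
  \mathcal{L}\colon \mathcal{O}_K^*\longrightarrow \R^{r_1+r_2},
  \qquad
  \mathcal{L}(u)=\bigl(a_1\log|\sigma_1(u)|,\ldots,a_{r_1+r_2}\log|\sigma_{r_1+r_2}(u)|\bigr),
\]
where $a_i=1$ for a real embedding and $a_i=2$ for a complex one, and $\sigma_i$ runs over one representative per pair of complex conjugates. By Dirichlet's unit theorem $\Lambda:=\mathcal{L}(\mathcal{O}_K^*)$ is a lattice of rank $r=r_1+r_2-1$ contained in the hyperplane $H=\{x\in\R^{r_1+r_2}: x_1+\cdots+x_{r_1+r_2}=0\}$, and the length $m_K(\epsi)$ is precisely the Euclidean norm of $\mathcal{L}(\epsi)$ inside $H$. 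Under the hypothesis that $m_K(\epsi)$ is the minimum nonzero length of $\Lambda$, the Hermite inequality applied to $\Lambda\subset H\cong\R^r$ gives
\[
  m_K(\epsi)\le \gamma_r^{1/2}\,\mathrm{covol}(\Lambda)^{1/r}.
\]

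The main computational step, and the only slightly delicate one, is to show that $\mathrm{covol}(\Lambda)=\sqrt{r+1}\,R_K$. To do this I would fix a system of fundamental units $\eta_1,\ldots,\eta_r$ and consider the $(r+1)\times r$ matrix $A$ whose columns are the vectors $\mathcal{L}(\eta_j)$. The regulator $R_K$ is, by definition, the absolute value of the determinant of any $r\times r$ minor of $A$ obtained by deleting one row. On the other hand, the covolume of $\Lambda$ inside $H$ equals $\sqrt{\det(A^tA)}$. Expanding via the Cauchy--Binet formula and using that each maximal minor of $A$ has the same absolute value $R_K$ (because the rows of $A$ sum to zero, so deleting any row gives the same determinant up to sign), one obtains $\det(A^tA)=(r+1)R_K^2$, hence $\mathrm{covol}(\Lambda)=\sqrt{r+1}\,R_K$. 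Substituting this back into Hermite's bound yields
\[
  m_K(\epsi)\le \gamma_r^{1/2}\bigl(\sqrt{r+1}\,R_K\bigr)^{1/r}
               =\bigl(\sqrt{r+1}\,R_K\,\gamma_r^{r/2}\bigr)^{1/r},
\]
which is exactly the claimed inequality.

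The only real obstacle is the Cauchy--Binet bookkeeping that produces the factor $\sqrt{r+1}$; everything else is a direct invocation of definitions (of $m_K$, of $R_K$, of $\gamma_r$) and of Dirichlet's theorem. Note that the argument does not use any hypothesis on the signature beyond $r\ge 1$, and, in particular, does not interact with the specific bound on $P_{n,r_2}$ from Proposition~\ref{prop1}; the two propositions are combined only later to eliminate the dependence on $m_K(\epsi)$ in the upper bound for $\log|d_K|$.
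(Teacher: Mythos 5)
Your proof is correct, and it is essentially the argument the paper relies on: the paper gives no proof of Proposition~\ref{prop2} itself but refers to Sections~2 and~3 of~\cite{regulators}, where the same reasoning is used — apply the definition of the Hermite constant $\gamma_r$ to the logarithmic unit lattice and combine it with the classical covolume formula $\mathrm{covol}(\Lambda)=\sqrt{r_1+r_2}\,R_K=\sqrt{r+1}\,R_K$ (your Cauchy--Binet computation, using that the weighted log-vectors lie in the trace-zero hyperplane and that all maximal minors equal $\pm R_K$). Nothing further is needed; your remark that the hypothesis $K=\Q(\epsi)$ plays no role here and only matters for Proposition~\ref{prop1} is also accurate.
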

\noindent %
Let us consider now the family of fields $K$ of degree $8$ and signature $(6,1)$ with $R_K \leq
R_0\coloneqq 7.431$. The inequalities of Propositions~\ref{prop1} and~\ref{prop2}, together with the upper
bound for $P_{8,1}$ from Theorem~\ref{theoremDegreesHigher} and the fact that $\gamma_6 = \sqrt[6]{64/3}$
\cite[p.~332]{casselsGeometryNumbers}, imply that, whenever $K$ is generated by a unit with minimum
logarithmic length, we have
\begin{equation}\label{EstimatePrimitive}
 \log |d_{K}|
 \leq 2\log (80) + \Big(\sqrt{7}\cdot 7.431\cdot \sqrt{64/3}\Big)^{1/6}\cdot \sqrt{\frac{8^3-8-4-2}{3}}
 =    36.079500\ldots
\end{equation}
In particular, the hypotheses of Proposition~\ref{prop2} are satisfied whenever $K$ is primitive, i.e.
has no subfields different from $\Q$ and itself. Thus, the first conclusion we can get is that any field
of signature $(6,1)$ with $|d_K| >\exp(36.079500\ldots)$ must have $R_K > 7.431$.\\
The role of the upper bound~\eqref{EstimatePrimitive} is crucial in the next proposition.
\begin{proposition}\label{prop3}
Let $g_{r_1,r_2}:(0,+\infty)\to \R$ be the analytic function defined as
\[
 g_{r_1,r_2}(x)\coloneqq \frac{1}{2^{r_1}4\pi i}\int_{2-i\infty}^{2+i\infty}(\pi^n4^{r_2}x)^{-s/2}(2s-1)\Gamma(s/2)^{r_1}\Gamma(s)^{r_2}ds.
\]
Let $K$ be a field with signature $(r_1,r_2)$ and let $d_1,d_2$ be such that $0<d_1 <
|d_K| < d_2$. If $2g_{r_1,r_2}(1/d_1) > R_0$ and $2g_{r_1,r_2}(1/d_2) > R_0$, then
\begin{equation}\label{lowerBoundRegulator}
 R_K \geq 2g_{r_1,r_2}(1/|d_K|) > R_0.
\end{equation}
If $d_2 \leq \delta$, where $\delta^3$ is a lower bound for the discriminant of a field of signature
$(3r_1,3r_2)$, then all the factors $2$ in the three inequalities involving $R_0$ can be replaced with
$4$.
\end{proposition}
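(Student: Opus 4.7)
The plan is to derive the pointwise bound $R_K\geq 2g_{r_1,r_2}(1/|d_K|)$ from the Mellin--Barnes structure of the completed Dedekind zeta function of $K$, in the spirit of~\cite{friedmanAnalyticRegulator}, and then to extend it over the whole interval $(d_1,d_2)$ by a concavity argument. The factor-$4$ refinement will be the main obstacle.

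Setting $x=1/|d_K|$ and using
\[
\xi_K(s)=\left(\frac{|d_K|}{\pi^n 4^{r_2}}\right)^{\!s/2}\Gamma(s/2)^{r_1}\Gamma(s)^{r_2}\zeta_K(s),
\]
the integrand defining $g_{r_1,r_2}(1/|d_K|)$ is exactly $(2s-1)\xi_K(s)/\zeta_K(s)$. I would then shift the contour from $\Re s=2$ to $\Re s=-1$: antisymmetry of $(2s-1)$ about $s=1/2$ together with the functional equation $\xi_K(s)=\xi_K(1-s)$ makes the two vertical pieces cancel after relabelling, so only the residues at $s=0$ and $s=1$ survive. Each is a known multiple of $h_K R_K/w_K$ and they add rather than cancel thanks to the sign of $(2s-1)$. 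The $\zeta_K$ in the denominator is controlled on $\Re s=2$ by the Euler product: non-negativity of the coefficients of $\log\zeta_K$ together with $\zeta_K(\sigma)\geq 1$ for $\sigma>1$ lets one estimate $1/\zeta_K(s)$ from below by the contribution of the unit ideal, and combined with $h_K\geq 1$ this yields $R_K\geq 2g_{r_1,r_2}(1/|d_K|)$.

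Next I would prove that $y\mapsto g_{r_1,r_2}(1/y)$ is concave on $(0,\infty)$. Differentiating twice under the integral introduces factors of $\log(\pi^n 4^{r_2}/y)$ whose Mellin--Barnes integrals, after symmetrising via the functional equation, have definite sign, giving $\partial^2_y g_{r_1,r_2}(1/y)\leq 0$. Concavity ensures the minimum of $g_{r_1,r_2}(1/y)$ on $[d_1,d_2]$ is attained at an endpoint, so the hypotheses $2g_{r_1,r_2}(1/d_i)>R_0$ for $i=1,2$ force $2g_{r_1,r_2}(1/|d_K|)>R_0$ throughout the interval, completing the proof of~\eqref{lowerBoundRegulator}.

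The main obstacle is the factor-$4$ sharpening under the extra hypothesis $d_2\leq\delta$. Here I would repeat the Mellin--Barnes argument with an auxiliary Dirichlet series built from $\zeta_K$ (for instance a convolution that preserves non-negativity of coefficients) whose pole at $s=1$ has twice the order, so that after a suitable choice of test function the residue identity produces twice the lower bound on $R_K$, i.e.\ the factor $4$. The subtle point is that positivity could fail if this auxiliary series were to acquire an extra singular factor originating from a number field $L$ of signature $(3r_1,3r_2)$ with $|d_L|\leq|d_K|^3$, which would contaminate the residue computation. The hypothesis $d_2\leq\delta$ excludes this pathology: it forces $|d_K|^3<\delta^3$ to lie strictly below the minimum discriminant in signature $(3r_1,3r_2)$, so no such $L$ can exist and the doubling argument goes through.
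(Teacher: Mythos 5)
Your proposal attempts a self-contained proof, whereas the paper itself gives none: Proposition~\ref{prop3} is quoted from Sections~2--3 of \cite{regulators}, whose engine is Friedman's explicit formula. Measured against that argument, your central analytic step fails because the contour trick is applied to the wrong function. The integrand defining $g_{r_1,r_2}(1/|d_K|)$ is $(2s-1)\xi_K(s)/\zeta_K(s)$, i.e.\ the Gamma factors alone: this function is \emph{not} antisymmetric under $s\mapsto 1-s$ (only $(2s-1)\xi_K(s)$ is), it has no pole at $s=1$, and its residues near $s=0$ are purely Archimedean quantities --- nothing proportional to $h_KR_K/w_K$, which can only enter through the residue of the $\zeta_K$ you divided out. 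Carried out for $(2s-1)\xi_K(s)$ itself, the antisymmetry argument gives $\frac{1}{2\pi i}\int_{(2)}(2s-1)\xi_K(s)\,ds=2^{r_1}h_KR_K/w_K$, and expanding $\zeta_K$ into its Dirichlet series yields Friedman's identity $h_KR_K/w_K=2\sum_{\mathfrak a}g_{r_1,r_2}\bigl(N\mathfrak a^2/|d_K|\bigr)$ over all nonzero integral ideals. The whole content of \eqref{lowerBoundRegulator} lies in passing from this identity to the single term $\mathfrak a=(1)$: one must control the sign and size of the remaining terms ($g_{r_1,r_2}$ is not globally positive or monotone), and one must handle $h_K$, which sits in the numerator, so invoking $h_K\geq 1$ weakens the inequality rather than producing $R_K\geq 2g$. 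Your substitute --- ``estimating $1/\zeta_K(s)$ from below on $\Re s=2$'' --- has no meaning for a complex oscillatory integrand and does neither job.

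The factor-$4$ refinement and the concavity step are also unsupported. The hypothesis $d_2\leq\delta$ is not there to exclude an ``extra singular factor'' of some auxiliary Dirichlet series (none is constructed, and doubling the order of a pole does not double a residue-type lower bound); its role in \cite{regulators} is arithmetic: if $h_K$ were too large, class field theory would give an unramified extension $L/K$ of relative degree $m\geq 3$, a field of signature $(mr_1,mr_2)$ with $|d_L|=|d_K|^m<\delta^m$ and the same root discriminant as $K$, contradicting the discriminant lower bounds (the case $m=3$ is exactly the $(3r_1,3r_2)$ bound $\delta^3$, larger $m$ being covered by monotonicity of such bounds in the degree). Bounding $h_K$ this way is what improves the factor $w_K/h_K$ coming out of the explicit formula; you guessed the right consequence of the hypothesis but attached it to a mechanism that does not exist. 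Finally, concavity of $y\mapsto g_{r_1,r_2}(1/y)$ on all of $(0,\infty)$ is asserted through sign claims that would again require the functional equation you do not have, and it is false: $g_{r_1,r_2}(x)$ decays exponentially in a power of $x$, so $g_{r_1,r_2}(1/y)$ is exponentially flat, hence convex, near $y=0^+$; moreover, since $g_{r_1,r_2}(1/y)$ is positive for large $y$, global concavity would force it to be nondecreasing and would make the hypothesis at $d_2$ redundant --- a clear warning that only the more delicate, range-restricted analysis of $g_{r_1,r_2}$ carried out in \cite{regulators} can support the two-endpoint statement.
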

We notice that $\exp(36.079500\ldots)$ is bigger than
$\delta \coloneqq (19.6)^{8}$, with $\delta^3$ a lower bound for the fields of degree 24 and signature $(18,3)$ (this value can
be recovered from Diaz y Diaz' tables of discriminant lower bounds~\cite{y1980tables}). One then verifies
that
\[
 2g_{6,1}(\exp(-36.079500\ldots)) = 11.54216\ldots > 7.431
\]
and
\[
 2g_{6,1}(1/19.6^8) = 37.50073\ldots > 7.431,
\]
so that from Proposition \ref{prop3} the desired fields satisfy $|d_K|\leq (19.6)^8$, which is around $2.1\cdot 10^{10}$.\\
From~\cite{battistoni2020small} we have the list of all the fields $K$ of signature $(6,1)$ with $|d_K|
\leq 79259702$, which is considerably smaller. However, one verifies that
\[
 4g_{6,1}(1/79259702) = 7.48749> 7.431.
\]
Hence from the second part of Proposition~\ref{prop3} it follows that every field $K$ of signature $(6,1)$ with $79259702 <
|d_K| \leq (19.6)^8$ must have $R_K > 7.431$, and so the only possible fields with regulator
below this bound must be contained in the known list (which is formed of eight fields). For every such
field we compute their regulator using PARI/GP: the computation is a priori conditional on GRH and the
output is an integer multiple of the true value, but we can prove that every output is the correct value
for $R_K$ because they satisfy the following condition.
\begin{proposition}\label{prop4}
Let $K$ be a field of signature $(r_1,r_2)$, and $g_{r_1,r_2}$ as in Proposition \ref{prop3}. Let
$\Tilde{R}_K \coloneqq mR_K$ be an integer multiple of the regulator $R_K$. Let $w_K$ be the number of
roots of unity of $K$ (which is two for every field with $r_2 \geq 1$). If
\[
 0 < \frac{\Tilde{R}_K/w_K}{2g_{r_1,r_2}(1/|d_K|)} < 2
\]
then $\Tilde{R}_K = R_K$.
\end{proposition}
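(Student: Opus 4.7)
The plan is to combine two facts: $\tilde R_K$ is a positive integer multiple of $R_K$ by hypothesis, and $R_K$ is bounded from below by the Friedman-type analytic inequality that underlies Proposition~\ref{prop3}. Writing $\tilde R_K = m R_K$ with $m\in\mathbb{Z}_{\geq 1}$, the task reduces to showing that the hypothesis forces $m<2$, hence $m=1$.

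The key ingredient is the strong form of the analytic inequality, namely
\[
  \frac{R_K}{w_K} \geq 2\,g_{r_1,r_2}(1/|d_K|),
\]
which is equivalent to the ``$4g$'' variant of the bound in Proposition~\ref{prop3} when $w_K=2$. This is the form that applies in the applications of this section, where the discriminants fall in the regime covered by the $\delta$-hypothesis of Proposition~\ref{prop3}.

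Substituting $\tilde R_K = m R_K$ in the left-hand side of the hypothesis and using this inequality, one obtains
\[
  \frac{\tilde R_K/w_K}{2\,g_{r_1,r_2}(1/|d_K|)}
  = m\cdot\frac{R_K/w_K}{2\,g_{r_1,r_2}(1/|d_K|)}
  \geq m.
\]
By the hypothesis this ratio is strictly less than~$2$, whence $m<2$; since $m$ is a positive integer, $m=1$ and $\tilde R_K = R_K$. The positivity part of the hypothesis is automatic because $\tilde R_K$, $w_K$, and $g_{r_1,r_2}(1/|d_K|)$ are all strictly positive.

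There is essentially no substantive obstacle: the whole argument is a one-line quantitative comparison between two inequalities. The only delicate point is the calibration of the constant~$2$ appearing both in the denominator and as the upper bound of the ratio: these are tuned precisely to the strong Friedman-type bound, so that the next integer value $m=2$ already saturates the inequality. If one could only invoke the baseline ``$2g$'' form of Proposition~\ref{prop3}, i.e.\ $R_K\geq 2g_{r_1,r_2}(1/|d_K|)$, the same reasoning would give only $m<4$ and an additional check would be required to exclude $m\in\{2,3\}$; thus the applicability of Proposition~\ref{prop4} in a given numerical case is really a statement about being in the $\delta$-regime that supports the stronger analytic inequality.
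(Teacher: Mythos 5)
Your arithmetic skeleton is the intended one (the paper itself gives no proof here and defers to Sections~2--3 of~\cite{regulators}): write $\tilde R_K=mR_K$ with $m\in\Z_{\geq 1}$, bound the ratio in the hypothesis from below by $m$ via an analytic lower bound for $R_K/w_K$, and conclude $m<2$. The gap lies in how you obtain the key inequality $R_K/w_K\geq 2g_{r_1,r_2}(1/|d_K|)$. Within this paper that ``$4g$'' form is only available through Proposition~\ref{prop3}, i.e.\ after verifying the endpoint conditions and the condition $d_2\leq\delta$, none of which appear among the hypotheses of Proposition~\ref{prop4}. You acknowledge this by saying that applicability ``is really a statement about being in the $\delta$-regime'', but that changes the statement being proved: you establish a conditional variant, not the proposition as stated, and with the only bound Proposition~\ref{prop3} yields without the $\delta$-condition ($R_K\geq 2g_{r_1,r_2}(1/|d_K|)$, i.e.\ $R_K/w_K\geq g_{r_1,r_2}(1/|d_K|)$ since $w_K=2$) your own computation gives only $m<4$. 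In the cited source the missing strength is extracted from Friedman's explicit-formula lower bound for $R_K/w_K$ together with positivity considerations valid precisely in the range where the hypothesis of Proposition~\ref{prop4} can be satisfied; that analytic input is the substantive content of the proposition, and your write-up assumes it rather than supplies it.

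A second, smaller error: the positivity part of the hypothesis is not automatic, because $g_{r_1,r_2}(1/|d_K|)$ need not be positive --- the paper itself records that $4g_{7,1}(\exp(-47.334\ldots))$ is negative. The condition $0<\frac{\tilde R_K/w_K}{2g_{r_1,r_2}(1/|d_K|)}$ is therefore a genuine restriction on $|d_K|$: it is what legitimizes dividing by $2g_{r_1,r_2}(1/|d_K|)$ without reversing the inequality, and in the argument of~\cite{regulators} it is tied to placing $|d_K|$ in the range where the stronger analytic bound is in force. Declaring it automatic is of a piece with the unjustified assumption above that the strong inequality holds for every field of the given signature.
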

The unique fields in the list satisfying $R_K \leq 7.431$ are the fields $K_1$, $K_2$, $K_3$ and $K_4$
described in the statement of Corollary~\ref{corollaryMinReg}. This proves that $K_1$, $K_2$ and $K_4$
are the primitive fields of signature (6,1) with smallest regulator, equal to $7.135\ldots$, $7.38\ldots$
and $7.430\ldots$, respectively. The discussion, however, is not concluded: we have to deal with the case
of non-primitive fields, for which Inequality~\eqref{EstimatePrimitive} is not guaranteed to be correct,
since it may happen that the unit $\epsi$ with smallest logarithmic length generates a proper subfield of
$K$. To deal with the possibility $\Q(\epsi) \subsetneq K$ we need a relative version of
Proposition~\ref{prop1} and some lower bounds on the logarithmic lengths in a tower of fields.
\begin{proposition}\label{prop5}
Let $K,F$ be number fields with $K=F(\epsi)$ where $\epsi\in\mathcal{O}_K^*$. Then
\[
 \log |d_K| \leq [K:F]\log |d_F| + [K:\Q]\log ([K:F]) + m_K(\epsi) C(K/F)
\]
with
\[
 C(K/F)\coloneqq \sqrt{\frac{1}{3}\sum_{v\in\infty_F}([K:F]^3-[K:F]-4r_2(v)^3-2r_2(v))}
\]
where $r_2(v)=0$ unless $v$ is real, in which case $r_2(v)$ is the number of complex places of $K$ lying
above $v$.
\end{proposition}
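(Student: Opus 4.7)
The plan is to parallel the proof of the absolute version (Proposition~\ref{prop1}), but carried out relatively over $F$. I would begin with the tower formula for the discriminant,
\[
|d_K| = |d_F|^{[K:F]}\,\bigl|N_{F/\Q}(\mathfrak{d}_{K/F})\bigr|,
\]
which at once accounts for the first term $[K:F]\log|d_F|$ of the claimed bound. Since $\epsi\in\mathcal{O}_K$ generates $K$ over $F$, one has $\mathcal{O}_F[\epsi]\subseteq\mathcal{O}_K$ and consequently $\mathfrak{d}_{K/F}$ divides the element discriminant $\operatorname{disc}_{K/F}(\epsi)$, whence
\[
\bigl|N_{F/\Q}(\mathfrak{d}_{K/F})\bigr| \leq \prod_{\sigma\colon F\hookrightarrow\C}\;\prod_{1\leq i<j\leq m}|\widetilde\sigma_i(\epsi)-\widetilde\sigma_j(\epsi)|^2,
\]
where $m\coloneqq[K:F]$ and $\widetilde\sigma_1,\dots,\widetilde\sigma_m$ denote the extensions of $\sigma$ to $K$, labelled so that $|\widetilde\sigma_i(\epsi)|$ is non-decreasing in~$i$.

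Next I would apply the Pohst-type factorisation $|\widetilde\sigma_i(\epsi)-\widetilde\sigma_j(\epsi)| = |\widetilde\sigma_j(\epsi)|\cdot|1-\widetilde\sigma_i(\epsi)/\widetilde\sigma_j(\epsi)|$ at each $\sigma$, separating every contribution into a ``$P$-factor'' and a ``length-factor''. The product of the $P$-factors over $1\leq i<j\leq m$ is a value of $P_{m,r_2^\sigma}$, where $r_2^\sigma = r_2(v)$ when $\sigma$ realises a real place $v$ of $F$ and $r_2^\sigma=0$ when $\sigma$ is complex; the Hadamard/Bertin bound $P_{m,\cdot}\leq m^{m/2}$ recalled in the introduction therefore contributes $m\log m$ per embedding of $F$, and summing over all $[F:\Q]$ embeddings of $F$ into $\C$ yields exactly the middle term $[K:\Q]\log[K:F]$.

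The remaining ``length-factor'' contribution is an $\R$-linear functional $L$ evaluated on the logarithmic embedding $\lambda(\epsi)$, whose coefficients split as a sum of local contributions indexed by $v\in\infty_F$: at each such $v$ one finds the Pohst weights $2(j-1)$ distributed across the archimedean places of $K$ above~$v$. Since $\epsi$ is a unit, $\lambda(\epsi)$ lies in the log-unit hyperplane $H$, so Cauchy--Schwarz after orthogonal projection of $L$ onto $H$ yields $L(\lambda(\epsi)) \leq m_K(\epsi)\,\|\pi_H L\|$. A variance-type computation at each $v$ --- the same one that produces the constant $\sqrt{(n^3-n-4r_2^3-2r_2)/3}$ in Proposition~\ref{prop1} --- shows that $\|\pi_H L\|^2 = \tfrac{1}{3}\sum_{v\in\infty_F}([K:F]^3-[K:F]-4r_2(v)^3-2r_2(v))$, which is exactly $C(K/F)^2$ as defined in the statement.

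The main obstacle will be the bookkeeping in this last step: one must track carefully which embeddings of $K$ share the same absolute value (real versus complex places of $K$ above each $v\in\infty_F$) so that the Pohst weights $0,1,\ldots,m-1$ lift correctly to the logarithmic coordinates of $\epsi$, and then verify that after projection onto $H$ the contributions from distinct $v$ are pairwise orthogonal, so their norm-squares add without cross terms. The case $F=\Q$ of Proposition~\ref{prop1}, in which $C(K/\Q)=\sqrt{(n^3-n-4r_2^3-2r_2)/3}$, serves as a consistency check for the whole computation.
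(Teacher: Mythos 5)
The paper itself gives no proof of Proposition~\ref{prop5}: it simply refers to Sections~2 and~3 of~\cite{regulators}, and your outline is essentially the argument found there. Your skeleton is correct: the tower formula, the divisibility of $\operatorname{disc}_{K/F}(\epsi)$ by the relative different (so $\bigl|N_{F/\Q}(\mathfrak{d}_{K/F})\bigr|$ is bounded by the product of squared differences of conjugates), Pohst's factorization into a ``$P$-factor'' and a ``length-factor'' at each embedding of $F$, the bound $m^{m/2}$ per embedding which sums to $[K:\Q]\log[K:F]$, and Cauchy--Schwarz against the unit relation for the remaining linear functional. Two steps of your plan, however, are not correct as written, although both are repairable.

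First, at a \emph{complex} embedding $\sigma$ of $F$ the $m=[K:F]$ values $\widetilde\sigma_i(\epsi)$ are not closed under complex conjugation (and are not real), so their product $\prod_{i<j}\bigl|1-\widetilde\sigma_i(\epsi)/\widetilde\sigma_j(\epsi)\bigr|$ is not a value of $P_{m,r_2}$ for any $r_2$; you cannot literally invoke the bound recalled in the introduction, which is stated under conditions A) and B). What you need is the Hadamard-type bound $\prod_{i<j}|1-z_i/z_j|\le m^{m/2}$ for \emph{arbitrary} nonzero complex numbers ordered by modulus; this is true (it is the classical ingredient of Remak's inequality) and should be stated and used in that generality.

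Second, your claim that $\|\pi_H L\|^2$ \emph{equals} $\tfrac13\sum_{v\in\infty_F}([K:F]^3-[K:F]-4r_2(v)^3-2r_2(v))$, with the projected per-$v$ contributions pairwise orthogonal, is false, and the orthogonality you propose to verify does not hold: projection onto the global trace-zero hyperplane centres the coefficient vector by a single global mean, whereas the right-hand side corresponds to centring separately over the places above each $v$ and, at real $v$, to the \emph{worst case} over the possible positions of the complex pairs in the local ordering (that worst case is exactly the source of the term $-4r_2(v)^3-2r_2(v)$). Fortunately only the inequality $\le$ is needed, and it holds: before merging conjugate pairs the local coefficient multiset at every $v$ is $\{0,2,\dots,2(m-1)\}$, so one may subtract the common constant $t=m-1$ from all coefficients (legitimate because $\sum_w \lambda_w=0$ for a unit), bound each local block by the absolute-case variance computation of Proposition~\ref{prop1} with $(n,r_2)$ replaced by $([K:F],r_2(v))$, and then apply Cauchy--Schwarz; since the global projection only decreases the norm, $\|\pi_H L\|\le C(K/F)$ and the stated bound follows. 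With these two corrections your argument coincides with the proof in~\cite{regulators}.
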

\begin{proposition}\label{prop6}
If $F \subset K$ and $\eta\in \mathcal{O}_F^*$, then $m_K(\eta) \geq \sqrt{[K:F]} \cdot m_F(\eta)$.
Moreover, if $F=\Q$ and $K$ is totally real, then for $\epsi\in \mathcal{O}_K^*$ we have $m_K(\epsi) \geq
\sqrt{[K:\Q]}\log ((1+\sqrt{5})/2).$
\end{proposition}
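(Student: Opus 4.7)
For the first inequality I would unravel the definition of $m_K$ as the Euclidean norm of the logarithmic embedding (real places weighted by $1$, complex places weighted by $2$). Since $\eta\in F$, the local value $|\eta|_w$ depends only on the restriction $v$ to $F$, so I can group the sum defining $m_K(\eta)^2$ by $v\in S_\infty(F)$. A complex $v$ has $[K:F]$ complex places of $K$ above it, contributing $4[K:F](\log|\eta|_v)^2$, i.e.\ exactly $[K:F]$ times the corresponding $F$-term $4(\log|\eta|_v)^2$. A real $v$ with $a$ real and $b$ complex places above (so $a+2b=[K:F]$) contributes $(a+4b)(\log|\eta|_v)^2\geq [K:F](\log|\eta|_v)^2$. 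Summing over $v$ yields $m_K(\eta)^2\geq [K:F]\,m_F(\eta)^2$, with equality iff no real place of $F$ becomes complex in $K$.

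For the second inequality the plan is to reduce to a Mahler-measure estimate and then invoke a classical bound of Schinzel. I may assume $\epsi$ is non-torsion, otherwise $m_K(\epsi)=0$ and the claim is vacuous. Let $\alpha_1,\dots,\alpha_d$ be the distinct $\Q$-conjugates of $\epsi$, all real because $K$ is totally real; each appears with multiplicity $n/d$ among the $n$ embeddings of $K$, so
\[
m_K(\epsi)^2 \;=\; \frac{n}{d}\sum_{j=1}^d (\log|\alpha_j|)^2.
\]
Since $\prod_j|\alpha_j|=1$, the sum $\sum_j|\log|\alpha_j||$ equals $2\log M(\epsi)$, where $M(\epsi)=\prod_j\max(1,|\alpha_j|)$ is the Mahler measure. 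Cauchy--Schwarz applied to $(\log|\alpha_j|)_j$ against the constant vector then gives $\sum_{j=1}^d(\log|\alpha_j|)^2\geq 4(\log M(\epsi))^2/d$.

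The decisive step is now Schinzel's 1973 theorem: every non-torsion totally real algebraic integer of degree $d$ satisfies $M(\alpha)\geq\phi^{d/2}$, with $\phi=(1+\sqrt{5})/2$. Inserting this into the previous inequality gives $m_K(\epsi)^2\geq (n/d)\cdot(4/d)\cdot(d^2/4)(\log\phi)^2=n(\log\phi)^2$, which is the claim. This bound, whose proof rests on a delicate study of totally real monic integer polynomials, is the main obstacle and I would cite it rather than reprove it. The constant is sharp: for $K=\Q(\sqrt{5})$ and $\epsi=\phi$ one has $m_K(\phi)=\sqrt{2}\log\phi$, which saturates both Cauchy--Schwarz and Schinzel's inequality simultaneously.
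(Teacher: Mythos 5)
Your proof is correct and is essentially the argument underlying the paper's treatment, which simply defers to Sections 2--3 of~\cite{regulators}: a place-by-place comparison of the weighted logarithmic embeddings (complex places above a real place only increase the weight, giving the factor $\sqrt{[K:F]}$) for the first inequality, and Schinzel's lower bound $M(\alpha)\geq ((1+\sqrt{5})/2)^{d/2}$ for totally real algebraic integers $\alpha\neq 0,\pm1$ combined with Cauchy--Schwarz for the second. One small wording point: for $\epsi=\pm1$ the inequality is false rather than vacuous (since $m_K(\pm1)=0$), so the statement must be read, as in the cited source, as excluding the torsion units; your exclusion of that case is exactly right, only the word ``vacuous'' is off.
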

Consider a field $K$ of signature (6,1) and let $\eta_1,\eta_2,\ldots,\eta_6$ be its independent units
realizing the successive minima for $m_K$. We have three possible cases to consider, which are the only
possible ones due to the signature of the involved fields:
\begin{List}
\item[a)] $F=\Q(\eta_1)$ is quartic of signature $(4,0)$, and $K=F(\eta_j)$ with $j\in\{2,3,4\}$;

\item[b)] $F=\Q(\eta_1)$ is quadratic totally real, and $K=F(\eta_2)$;

\item[c)] $F=\Q(\eta_1)$ is quadratic totally real, $L=F(\eta_2)$ is quartic of signature $(4,0)$ and
    $K=L(\eta_j)$ with $j\in\{3,4\}$.
\end{List}
For every such possible case we employ Propositions~\ref{prop5} and~\ref{prop6} to obtain an estimate of
the form $\log |d_K| \leq A_0 + \sum_{i=1}^s A_i m_K(\eta_i)$. We have then
\begin{align}
 &\text{a) }\log|d_K| \leq 24\log 2 +2\sqrt{10}\, m_K(\eta_1) + \sqrt{6} \, m_K(\eta_j),                         \quad\qquad\qquad\qquad j\in\{2,3,4\}, \nonumber\\
 &\text{b) }\log|d_K| \leq 24\log 2 +2\sqrt{2} \, m_K(\eta_1) + \sqrt{38}\, m_K(\eta_2),                                                                \nonumber\\
 &\text{c) }\log|d_K| \leq 24\log 2 +2\sqrt{2} \, m_K(\eta_1) + 2\sqrt{2}\, m_K(\eta_2)+\sqrt{6}\, m_K(\eta_j),  \quad                   j\in\{3,4  \}. \label{inequalitiesNonPrim}
\end{align}
This upper bound can then be properly optimized thanks to Lemma~3 of~\cite{regulators} (with the
parameter $\delta$ appearing there set to $\sqrt{8}\log((1+\sqrt{5})/2)$ which is a possible choice
according to Proposition~\ref{prop6}), giving
\[
\log |d_K|
\leq \begin{cases}
        35.237296\ldots & \text{in case a),}\\
        35.701163\ldots & \text{in case b),}\\
        33.821710\ldots & \text{in case c).}
     \end{cases}
\]
Therefore a non-primitive field of signature $(6,1)$ with $R_K {\leq} 7.431$ satisfies $|d_K| \leq
\exp(35.701163\ldots)$: we have $2\cdot g_{6,1}(\exp(-35.701163\ldots)) > 47 > 7.431$ and thus (applying again
the previous propositions) we return to the aforementioned list of eight fields, where we see that the
unique non-primitive field with regulator below 7.431 is $K_3$, which has regulator $7.414\ldots$. This
completes the proof of Corollary~\ref{corollaryMinReg} and Corollary~\ref{corollaryMinRegNonPrim}.

\subsection{Proof of Corollaries~\ref{corollaryDeg5}, \ref{corollaryDeg6} and~\ref{corollaryDeg7}}
The proof of the last three results is similar to the one above in the degree $8$ case. Notice that,
since $5$ and $7$ are primes, the hypothesis of Proposition~\ref{prop1} is satisfied and thus only the
corresponding inequality must be used.

In the degree $5$ case, assume $R_0\coloneqq 2.15$: if $K$ is such that $R_K\leq R_0$, then $P_{5,1}\leq 16\, M$
implies $|d_K|\leq \exp(16.882140\ldots)$. This number is bigger than $(13.136)^5$, which is the
lower bound $\delta$ as in Proposition \ref{prop3} for this signature, hence we have to verify
the condition~\eqref{lowerBoundRegulator} with a factor $2$ instead of $4$. This yields
$2g_{3,1}(\exp(-16.882140\ldots)) = 3.406994\ldots > 2.15$ and $2g_{3,1}(1/(13.136)^5) =
2.158866\ldots > 2.15$. Since $48000 <
(13.136)^5$, we verify that
$4g_{3,1}(1/48000) =
2.157100\ldots > 2.15$ and so the only fields with signature $(3,1)$ and $R_K\leq 2.15$ are among the 145
fields in this signature with $|d_K|\leq 48000$. We download the lists from LMFDB (which are complete up
to this bound) and we put them in PARIgp: a computation of the regulator shows that only 40 fields among
them satisfy $R_K\leq 2.15$, and they all have $|d_K|\leq 25679$. The output regulators are the true
values since the condition described in Proposition~\ref{prop4} is always satisfied.

Now, assume the degree is $6$ and $R_0\coloneqq 4.60$: since $P_{6,1}\leq 34.89$, assuming $R_K\leq R_0$
and mimicking the computations in~\cite[Section~5.2]{regulators} we have that the several relations
between units of minimum length and subfields of $K$ imply
\[
|d_K|\leq\begin{cases}
\exp(24.666347\ldots) & K=\Q(\eta_1),\\
\exp(23.93583\ldots) & \Q(\eta_1)\text{ is a real quadratic subfield of }K,\\
\exp(19.70966\ldots) & \Q(\eta_1)\text{ is a totally real cubic subfield of }K.
\end{cases}
\]
Hence, in any case we have $|d_K|\leq \exp(24.666347\ldots)$ and this upper bound is bigger than
$(15.536)^6$, which is our choice for $\delta$ in this signature. We have
$2g_{4,1}(\exp(-24.666347\ldots)) = 5.083637\ldots > 4.60$ and $2g_{4,1}(1/(15.536)^6) = 5.266992\ldots >
4.60$. Since $1300000 < (15.536)^6$, we further verify that $4g_{4,1}(1/1300000) = 4.631338\ldots > 4.60$:
therefore, the only possible fields with signature $(4,1)$ and $R_K\leq 4.60$ must be among the 613 ones
with $|d_K|\leq 1300000$. A PARIgp computation as above shows that 136 among them have the desired
regulator, and all the output values are the true values because the hypothesis in
Proposition~\ref{prop4} is satisfied.

Finally, assume the degree is $7$ and $R_0\coloneqq 6.10$: then $P_{7,1}\leq 65.81$ and any field with
signature $(5,1)$ and $R_K\leq R_0$ has $|d_K|\leq \exp(30.549708\ldots)$, which is greater than the
number $(17.686)^7$, which is our choice for $\delta$ for this signature. One has
$2g_{5,1}(\exp(-30.549708\ldots))=7.409746\ldots > 6.10$ and $2g_{5,1}(1/(17.686)^7) = 13.744146\ldots >
6.10$. Since $14500000 < (17.686)^7$, we verify that $4g_{5,1}(1/14500000) = 7.060629\ldots > 6.10$, so
that one only has to check if among the 294 fields with signature $(5,1)$ and $|d_K|\leq 14500000$ there
exist some with $R_K\leq 6.10$. The usual PARIgp computation on the complete LMFDB list finds 59 of them,
and they all satisfy $|d_K|\leq 7495927$.

\section{Recalls on the totally real case}\label{SectionTotReal}
In this section we briefly recall how the supremum for $P_{n,0}$ was obtained: this discussion will set
the foundation for the work on the function $P_{n,1}$. All the results in this section are proved
in~\cite{battistonimolteni}.

In the totally real case all the numbers $y_i$ satisfying A) and B) that we consider are real. Setting
the change of variables
\begin{equation}\label{changeOfVariablesReal}
 x_i\coloneqq \frac{y_i}{y_{i+1}},
 \qquad
 i=1,\ldots,n-1
\end{equation}
the function $P_{n,0}$ becomes the polynomial in $n-1$ variables (each one in $[-1,1]$)
\[
Q_{n-1}(x_1,\ldots,x_{n-1}) = \prod_{i=1}^{n-1}\prod_{j=i}^{n-1}\left(1-\prod_{k=i}^j x_k\right)
\]
and looking for the supremum of $P_{n,0}$ is equivalent to looking for the maximum of $Q_{n-1}$ over
$[-1,1]^{n-1}$. Instead of directly studying the function $Q_{n-1}$, one considers all the $2^{n-1}$
subcases defined by a choice for the signs of the variables; if $\mathbf{\epsi}\coloneqq
(\epsi_1,\ldots,\epsi_{n-1})$ is a vector formed by elements $\epsi_i\in\{\pm 1\}$, we consider the
$2^{n-1}$ configurations
\begin{equation}\label{configuration}
Q_{n-1,\mathbf{\epsi}}(x_1,\ldots,x_{n-1}) = \prod_{i=1}^{n-1}\prod_{j=i}^{n-1}\left(1-\prod_{k=i}^j\epsi_k\prod_{k=i}^j x_k\right)
\end{equation}
where now each variable is assumed to be in $[0,1]$.

Using the notation of~\cite{battistonimolteni}, we recall the notion of \emph{graphical scheme}, i.e. a
triangular $n\times n$ array $C$ which has only symbols ``+'' or ``-'' in its entries $C_{i,j}$ for
$1\leq i\leq j\leq n-1$. \eqref{graphScheme1} and~\eqref{graphScheme2} are examples of graphical schemes.
A function $F_C$ defined over $[0,1]^{n-1}$ can be associated to each graphical scheme $C$, and it has
the form:
\[
F_C(x_1,\ldots,x_{n-1}) = \prod_{i=1}^{n-1}\prod_{j=i}^{n-1}\left(1-C_{i,j}\prod_{k=i}^j x_k\right).
\]
The configuration~\eqref{configuration} is thereby the function $F_C$ for the graphical scheme $C$ with entries
\[
C_{i,j} =\begin{cases}
+ & \text{if }\prod_{k=i}^j\epsi_k = 1,\\
- & \text{if }\prod_{k=i}^j\epsi_k = -1.
\end{cases}
\]
Given two graphical schemes $C$ and $C'$, we say that $C \leq C'$ if $F_C\leq F_{C'}$. Estimates of
graphical schemes can be obtained by recognizing patterns in the starting scheme $C$ and replacing them
with a new pattern such that the replacement corresponds to an estimate between the corresponding
factors: the resulting scheme $C'$ will then satisfy $C\leq C'$. We denote these estimates as
\emph{dynamical estimates}. We recall four dynamical estimates.
\begin{lemma}\label{lemmaDynamical}
Let $C$ be a graphical scheme.
\begin{List}
\item[P)] Let $C'$ be the graphical scheme defined changing the element
    $\ytableausetup{smalltableaux,aligntableaux=bottom}
     \begin{ytableau}
      {+}  \\
     \end{ytableau}
    $
    into
    $
     \begin{ytableau}
      {-}  \\
     \end{ytableau}
    $
    and keeping everything else unchanged. Then $C \leq C'$.
\item[H)]
    Let $C'$ be the graphical scheme defined changing the two elements
    $\ytableausetup{smalltableaux,aligntableaux=bottom}
    \begin{ytableau}
     {+} & {-}  \\
    \end{ytableau}
    $,
    not necessarily adjacent, into
    $\ytableausetup{smalltableaux,aligntableaux=bottom}
    \begin{ytableau}
     {-} & {+}  \\
    \end{ytableau}
    $
    and keeping everything else unchanged. Then $C \leq C'$.
\item[V)] Let $C'$ be the graphical scheme defined changing the two elements
    $\ytableausetup{smalltableaux,aligntableaux=bottom}
    \begin{ytableau}
     {-} \\
     {+} \\
    \end{ytableau}
    $,
    not necessarily adjacent, into
    $
    \begin{ytableau}
     {+} \\
     {-} \\
    \end{ytableau}
    $
    and keeping everything else unchanged. Then $C \leq C'$.
\item[S)] Let $C'$ be the graphical scheme defined changing the four elements
    $\ytableausetup{smalltableaux,aligntableaux=bottom}
    \begin{ytableau}
     {-} & {+} \\
     {+} & {-} \\
    \end{ytableau}
    $,
    not necessarily adjacent, into
    $
    \begin{ytableau}
     {+} & {-} \\
     {-} & {+} \\
    \end{ytableau}
    $
    and keeping everything else unchanged. Then $C \leq C'$.
\end{List}
\end{lemma}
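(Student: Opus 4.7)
The plan is to reduce each of the four assertions to a short algebraic inequality involving only the factors that are altered, exploiting the fact that every factor of $F_C$ is nonnegative. Throughout I will write
\[
  p_{i,j} \coloneqq \prod_{k=i}^{j} x_k \in [0,1],
\]
so that each factor of $F_C$ equals either $1 - p_{i,j}$ or $1 + p_{i,j}$ according to whether $C_{i,j}$ is $+$ or $-$; in particular every factor is nonnegative. In all four cases $C$ and $C'$ agree outside a small set of cells, and the unchanged factors of $F_C$ and $F_{C'}$ coincide and are nonnegative, so it suffices to compare the product of the changed factors.

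Case (P) is immediate: the replacement sends $1 - p_{i,j}$ to $1 + p_{i,j}$, which is larger since $p_{i,j} \geq 0$. For (H) I consider cells $(i, j_1)$ and $(i, j_2)$ with $j_1 < j_2$ and compute
\[
  (1 + p_{i,j_1})(1 - p_{i,j_2}) - (1 - p_{i,j_1})(1 + p_{i,j_2}) = 2(p_{i,j_1} - p_{i,j_2}),
\]
which is nonnegative because $p_{i,j_2} = p_{i,j_1}\prod_{k=j_1+1}^{j_2} x_k \leq p_{i,j_1}$. Case (V) is the mirror image: for cells $(i_1, j)$ and $(i_2, j)$ with $i_1 < i_2$ one has $p_{i_1,j} = \bigl(\prod_{k=i_1}^{i_2-1} x_k\bigr) p_{i_2,j} \leq p_{i_2,j}$ and the analogous two-factor identity yields the required inequality.

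The substantial case is (S), at positions $(i_1,j_1), (i_1,j_2), (i_2,j_1), (i_2,j_2)$ with $i_1 < i_2 \leq j_1 < j_2$. Abbreviating $a \coloneqq p_{i_1,j_1}$, $b \coloneqq p_{i_1,j_2}$, $c \coloneqq p_{i_2,j_1}$, $d \coloneqq p_{i_2,j_2}$, the key observation is the identity
\[
  a d = b c,
\]
which I will check by noting that both products equal $\bigl(\prod_{k=i_1}^{i_2-1} x_k\bigr)\bigl(\prod_{k=i_2}^{j_1} x_k\bigr)^2\bigl(\prod_{k=j_1+1}^{j_2} x_k\bigr)$. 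Setting $u \coloneqq ad = bc$, expanding the two four-factor products via the pairings $(1+a)(1+d),(1-b)(1-c)$ and $(1-a)(1-d),(1+b)(1+c)$ gives
\[
  (1-a)(1+b)(1+c)(1-d) - (1+a)(1-b)(1-c)(1+d) = 2(1+u)\bigl(b + c - a - d\bigr).
\]
Writing $r \coloneqq \prod_{k=i_1}^{i_2-1} x_k$ and $t \coloneqq \prod_{k=j_1+1}^{j_2} x_k$, both in $[0,1]$, so that $a = rc$, $b = rct$ and $d = ct$, a short computation yields
\[
  b + c - a - d = c(1-r)(1-t) \geq 0,
\]
which completes (S).

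The main obstacle I anticipate is the bookkeeping in (S): spotting the identity $ad = bc$, which is what makes the degree-four expansion collapse to the clean difference $2(1+u)(b+c-a-d)$, and then recognizing $c(1-r)(1-t)$ as the correct factorisation. Once these are in place, (P), (H) and (V) follow almost immediately from the monotonicity of $p_{i,j}$ in each of its arguments, and no case requires the modified cells to be adjacent.
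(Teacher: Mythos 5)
Your proof is correct, and it follows essentially the approach behind the paper's statement: the paper does not reprove this lemma here but recalls it from \cite{battistonimolteni}, where the argument is precisely this factorwise comparison (the unchanged factors are common and nonnegative, so only the altered cells matter). Your verifications check out on $[0,1]^{n-1}$: the differences $2(p_{i,j_1}-p_{i,j_2})$ for H and its column analogue for V, and for S the identity $ad=bc$ with the collapse of the four-factor difference to $2(1+u)(b+c-a-d)=2(1+u)\,c(1-r)(1-t)\ge 0$.
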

These four estimates are sufficient, as the following result states.
\begin{theorem}[Th.2 in~\cite{battistonimolteni}]
Let $C_{n-1,\epsi}$ be the graphical scheme of a configuration $Q_{n-1,\epsi}$. Let $C_{n-1,-}$ be the
graphical scheme of the configuration $Q_{n-1,\epsi_{-}}$ which has vector of signs $\epsi_{-} \coloneqq
(-1,-1,\ldots,-1)$. Then $C_{n-1,\epsi} \leq C_{n-1,-}$: the estimate is obtained by applying only
estimates of the form P, H, V and S to the entries of $C_{n-1,\epsi}$.
\end{theorem}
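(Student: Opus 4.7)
The plan is to proceed by induction on $n$, with the base case $n=2$ being a single-cell scheme where either the cell is already $-$ or P applies. The structural input for the inductive step is the rank-one product identity $C_{i,j}=\sigma_{i-1}\sigma_j$, where $\sigma_j\coloneqq\prod_{k=1}^{j}\epsi_k$ and $\sigma_0\coloneqq 1$: the configuration scheme is the outer product of the boundary vector $(\sigma_0,\ldots,\sigma_{n-1})\in\{\pm 1\}^n$, and the target $C_{n-1,-}$ is the outer product corresponding to the alternating boundary $\sigma_j=(-1)^j$. This viewpoint reduces the combinatorics of $\epsi$ to the combinatorics of its prefix-product sequence, which is the object we actually need to transform.

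The four operations act predictably on $+$ signs: P removes a $+$; H transfers a $+$ one step rightward within a row; V transfers a $+$ upward within a column; and S performs a $2\times 2$ corner swap exchanging a diagonal pair of $+$'s with a diagonal pair of $-$'s. I would exhibit a sequence that first uses H, V, and S to rearrange $C_{n-1,\epsi}$ so that every target $+$ position (the odd superdiagonals of $C_{n-1,-}$) already carries a $+$ in the rearranged scheme, and then applies P to eliminate all excess $+$'s that lie on the even superdiagonals. A natural way to produce the H/V/S phase is to process the boundary sequence $\sigma_j$ from one end toward the other, at each step correcting a single defect where $\sigma_j$ disagrees with $(-1)^j$, using one of the four moves, while maintaining the invariant that the scheme matches the target in all previously processed positions.

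The main obstacle is to show that a legal move of one of the four types is available at every step of this correction procedure, since H, V, and S each demand a specific local sign pattern that may or may not be present after earlier corrections. Carrying this out cleanly is delicate: long runs of equal signs in $\epsi$ produce large blocks of equal entries in the scheme, inside which H and V are inapplicable and one must instead act at the boundary of the block by S; short runs may leave configurations that S alone cannot reconcile and that require an auxiliary H or V preparation. A clean case analysis organized by the runs of $\epsi$ (equivalently, by the sign-change positions of the sequence $\sigma_j$), together with a careful ordering of the corrections so that earlier moves do not destroy the local patterns required by later ones, should suffice. Since the theorem is quoted as established in \cite{battistonimolteni}, I would ultimately defer to that reference for the complete bookkeeping of which move is invoked at each defect and in which order.
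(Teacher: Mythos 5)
Your structural observation is sound: writing $\sigma_j\coloneqq\prod_{k\le j}\epsi_k$, the scheme entry at $(i,j)$ is indeed $\sigma_{i-1}\sigma_j$, the target corresponds to the alternating boundary $\sigma_j=(-1)^j$, and a counting check even confirms feasibility (the number of $+$ entries, namely $\binom{p}{2}+\binom{m}{2}$ where $p,m$ count the two signs among $\sigma_0,\dots,\sigma_{n-1}$, is minimized exactly by the alternating boundary, which is consistent with the fact that H, V, S preserve the number of $+$'s and P only removes them). But the proposal stops short of a proof at precisely the point where the theorem lives. First, the proposed induction ``correct one defect of $\sigma_j$ at a time'' is not directly implementable: flipping a single $\sigma_j$ flips every entry in column $j$ and in row $j+1$, i.e.\ about $n-1$ entries at once, whereas P, H, V, S change one, two or four entries and destroy the outer-product structure, so intermediate schemes are not configuration schemes and the ``boundary sequence'' is no longer defined for them. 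Your invariant (``the scheme matches the target in all previously processed positions'') therefore has to be formulated and verified directly at the level of individual entries, and nothing in the proposal does this. Second, and more importantly, the assertion that at every step one of the four moves with its required local pattern is available, and that the corrections can be ordered so that earlier moves do not destroy the patterns needed later, is exactly the content of the statement; saying that a case analysis organized by the runs of $\epsi$ ``should suffice'' and then explicitly deferring the bookkeeping to the cited reference leaves the core argument unproven.

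Note also that the present paper does not prove this result either: it is quoted as Theorem~2 of~\cite{battistonimolteni}, so there is no in-paper proof to match your outline against. As a self-contained argument, what is missing is the explicit reduction algorithm (or induction, e.g.\ on the number of sign changes or of $+$ entries of $\epsi$) together with the verification, case by case, that the hypotheses of P, H, V, S are met at each application — this is the part you would need to reconstruct from~\cite{battistonimolteni} rather than cite.
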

The desired maximum for our configurations can thus be obtained by just studying the graphical scheme
$C_{n-1,-}$, which is of the form
\[
\ytableausetup{nosmalltableaux}
\begin{ytableau}
 {-}   & {+}   & {-}   & {+}   & \none[\ \cdots]\\
 \none & {-}   & {+}   & {-}   & \none[\ \cdots]\\
 \none & \none & {-}   & {+}   & \none[\ \cdots]\\
 \none & \none & \none & {-}   & \none[\ \cdots]\\
 \none & \none & \none & \none & \none[\ \cdots]\\
\end{ytableau}
\]
with every line made of alternating signs. Patterns in the scheme correspond to factors of the
corresponding function, and each such factor may be bounded by suitable constants.
\begin{lemma}\label{lemmaStaticEstimates}
Let $C$ be a graphical scheme. The following patterns, if contained in $C$, can be estimated with the
following constants:
\begin{List}
    \item  $\ytableausetup{smalltableaux,aligntableaux=bottom}
        \begin{ytableau}
         \none     & \none[j]\\
         \none[i]  & {+}     \\
        \end{ytableau}
        \leq 1
        $,
        i.e. $F_{C_{i,j}} \leq 1$.
    \item $\ytableausetup{smalltableaux,aligntableaux=bottom}
        \begin{ytableau}
         \none     & \none[j] & \none[j']\\
         \none[i]  & {+}      & {-}      \\
        \end{ytableau} \leq 1
        $,
        i.e. $F_{C_{i,j}} F_{C_{i,j'}} \leq 1$.
    \item $\ytableausetup{smalltableaux,aligntableaux=bottom}
        \begin{ytableau}
         \none     & \none[j] \\
         \none[i]  & {-}      \\
         \none[i'] & {+}      \\
        \end{ytableau} \leq 1
        $,
        i.e. $F_{C_{i,j}} F_{C_{i',j}} \leq 1$.
    \item $\ytableausetup{smalltableaux,aligntableaux=bottom}
        \begin{ytableau}
         \none     & \none[j] & \none[j']\\
         \none[i]  & {-}      & {+}      \\
         \none[i'] & {+}      & {-}      \\
        \end{ytableau}
        \leq 1$,
        i.e. $F_{C_{i,j}} F_{C_{i',j}} F_{C_{i,j'}} F_{C_{i',j'}} \leq 1$.
    \item Assume $j'=j+1$. Then\\ $\ytableausetup{smalltableaux,aligntableaux=bottom}
        \begin{ytableau}
         \none     & \none[j] & \none[j']\\
         \none[i]  & {-}      & {+}      \\
         \none[i'] &    \none & {-}      \\
        \end{ytableau}
        \ \ ,\
        \begin{ytableau}
         \none     & \none[j] & \none[j']\\
         \none[i]  & {-}      & {-}      \\
         \none[i'] &    \none & {+}      \\
        \end{ytableau}
        \ \ ,\
        \begin{ytableau}
         \none     & \none[j] & \none[j']\\
         \none[i]  & {+}      & {-}      \\
         \none[i'] &    \none & {-}      \\
        \end{ytableau}$
        are all $\leq 1$ i.e. $F_{C_{i,j}} F_{C_{i,j'}} F_{C_{i',j'}} \leq 1$ for each of them.
    \item Assume $i'=i+1$. Then\\ $\ytableausetup{smalltableaux,aligntableaux=bottom}
        \begin{ytableau}
         \none     & \none[j] & \none[j']\\
         \none[i]  & {-}      & {+}      \\
         \none[i'] & \none    & {-}      \\
        \end{ytableau}
        \ \ ,\
        \begin{ytableau}
         \none     & \none[j] & \none[j']\\
         \none[i]  & {-}      & {-}      \\
         \none[i'] & \none    & {+}      \\
        \end{ytableau}
        \ \ ,\
        \begin{ytableau}
         \none     & \none[j] & \none[j']\\
         \none[i]  & {+}      & {-}      \\
         \none[i'] & \none    & {-}      \\
        \end{ytableau}$
        are all $\leq 1$ i.e. $F_{C_{i,j}}\cdot  F_{C_{i,j'}}\cdot F_{C_{i',j'}} \leq 1$ for each of them.
    \end{List}
\end{lemma}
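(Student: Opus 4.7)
The plan is to reduce each of the six items to a polynomial inequality on $[0,1]^m$ for small $m \in \{1,2,3\}$, exploiting the fact that every factor $F_{C_{i,j}}$ has the form $1 - C_{i,j}\prod_{k=i}^{j} x_k$ with $x_k \in [0,1]$, and that the monomials appearing in any given pattern can be rewritten in terms of a few independent sub-products.

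I would proceed in order of increasing complexity. Item 1 is immediate: when $C_{i,j} = +$, the factor equals $1 - \prod_{k=i}^{j} x_k \in [0,1]$. For item 2, I would introduce $\alpha := \prod_{k=i}^{j} x_k$ and $\beta := \prod_{k=j+1}^{j'} x_k$, both in $[0,1]$, so that the claim reduces to $(1-\alpha)(1+\alpha\beta) \leq 1$. The identity
\[
  (1-\alpha)(1+\alpha\beta) = 1 - \alpha\bigl(1 - \beta(1-\alpha)\bigr)
\]
settles it, since $\beta(1-\alpha) \in [0,1]$ makes the outer parenthesis non-negative. Item 3 is symmetric: after setting $\gamma := \prod_{k=i'}^{j} x_k$ and $c := \prod_{k=i}^{i'-1} x_k$ so that $\prod_{k=i}^{j} x_k = c\gamma$, the same style of rewriting yields the bound.

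Item 4, the four-cell square pattern, is the main obstacle. Using three auxiliary parameters $\gamma, \beta, c \in [0,1]$ encoding the three independent sub-products, the product of the four factors becomes
\[
  (1+c\gamma)(1-\gamma)(1+\gamma\beta)(1-c\gamma\beta).
\]
I would pair these as $\bigl[(1+c\gamma)(1-\gamma)\bigr]\bigl[(1+\gamma\beta)(1-c\gamma\beta)\bigr]$, expand each bracket, and regroup the product as a polynomial in $\gamma$ with coefficients in $c$ and $\beta$. A careful bookkeeping reveals that the difference of this product from $1$ has non-positive coefficients on $[0,1]^3$; alternatively, one can exploit the symmetry in $(c,\beta)$ and check that the only interior critical point of the product in $\gamma$ yields a value $\leq 1$. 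The difficulty is purely combinatorial-algebraic: the right pairing and order of expansion must be chosen so that the non-positivity is visible without an explosion of cases.

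For items 5 and 6, the structural hypothesis $j'=j+1$ (resp.\ $i'=i+1$) together with the empty cell at position $(i',j)$ forces the three surviving factors to depend on only two scalar quantities $\alpha,\beta \in [0,1]$, so each of the three sign patterns reduces to a two-variable polynomial inequality on the unit square. I would handle each pattern by computing the partial derivative with respect to $\alpha$, establishing monotonicity (or locating the unique interior critical point), and then evaluating the resulting one-variable function on its endpoints $\alpha \in \{0,1\}$. Since the three sign patterns produce structurally different expressions, the argument must be repeated three times, but each individual step is elementary calculus.
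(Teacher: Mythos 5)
Items 1--3 of your plan are fine, and for item 4 your reduction to the three sub-products $c,\gamma,\beta$ is the right move; but the specific claim you lean on is not true as stated. Expanding $P=(1+c\gamma)(1-\gamma)(1+\gamma\beta)(1-c\gamma\beta)$ in powers of $\gamma$, the coefficients of $P-1$ are $(1-c)(\beta-1)$, $-(c+c\beta^2+\beta(1-c)^2)$, $c\beta(1-c)(\beta-1)$ and $c^2\beta^2$: the last one is \emph{positive}, so "the difference from $1$ has non-positive coefficients" fails, and the alternative critical-point route is only asserted, not carried out. The inequality itself is true and your setup can be completed with one extra absorption step (e.g.\ $c^2\beta^2\gamma^4\leq c\beta^2\gamma^2$, or write $P=(1-A-B)(1+cA-c^2B)$ with $A=\gamma(1-\beta)$, $B=\gamma^2\beta$, and use $A+B\leq\gamma\leq 1$), but as written the decisive step of item 4 is missing.

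The serious gap is in items 5 and 6, which you treat as the easy cases. First, the three factors do not depend on only two scalar quantities: with $j'=j+1$ the three monomials are $ab$, $abt$, $bt$ with $a=\prod_{k=i}^{i'-1}x_k$, $b=\prod_{k=i'}^{j}x_k$, $t=x_{j+1}$, three independent parameters (item 5 does not assume $i'=i+1$). Second, and decisively, the bound $\leq 1$ you set out to confirm by monotonicity/endpoint analysis is violated at corners of the cube: for the pattern with entries $-,+$ in row $i$ and $-$ at $(i',j')$, taking $i=1$, $i'=2$, $j=1$, $j'=2$ gives $(1+x_1)(1-x_1x_2)(1+x_2)$, which equals $2$ at $(x_1,x_2)=(1,0)$; analogously, in each of the six sign patterns one reaches the value $2$ by sending the variable $x_{j+1}$ (or $x_i$, for the $+,-$ over $-$ pattern) to $0$ and the rest to $1$. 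This is consistent with the paper's own introductory display~\eqref{graphScheme2}, where exactly this triangular pattern is estimated by $2$, not $1$; the correct constant for these three-cell patterns (as they are used, e.g., on the diagonal, and as stated in the source \cite{battistonimolteni} from which this lemma is recalled) must be $2$. So your calculus endgame, if actually executed, would return a maximum of $2$, and no elementary argument can deliver the bound $1$ claimed in items 5--6; your proposal gives no sign of having noticed this, which is a genuine failure of the plan rather than a presentational shortcut.
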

These \emph{static estimates} are what is needed in order to recover the result for $C_{n-1,-}$.
\begin{theorem}[Lemma~3 and Th. 1 in~\cite{battistonimolteni}]
One has $  C_{n-1,-} \leq 2^{\intpart{n/2}}$ so that this is the maximum of $Q_{n-1}$ on $[-1,1]^{n-1}$.
This bound cannot be improved since it is attained at the point $(-1,0,-1,0,\ldots)$, so that the
supremum of $P_{n,0}$ is $2^{\intpart{n/2}}$.
\end{theorem}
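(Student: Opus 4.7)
The plan is to establish the upper bound $C_{n-1,-}\leq 2^{\lfloor n/2\rfloor}$ by an explicit covering of the triangular array into elementary blocks, and then combine it with the preceding theorem ($C_{n-1,\epsi}\leq C_{n-1,-}$ for every sign vector $\epsi$) plus an explicit evaluation at $(-1,0,-1,0,\ldots)$ to conclude that $2^{\lfloor n/2\rfloor}$ is exactly the supremum of $P_{n,0}$.

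Since $C_{n-1,-}$ has entry $(-1)^{j-i+1}$ at position $(i,j)$, its diagonal is filled with $-$ and each row alternates. I would partition the array column pair by column pair: for $k=1,\ldots,\lfloor (n-1)/2\rfloor$, the adjacent columns $2k-1$ and $2k$ contain $(k-1)$ copies of the $2\times 2$ square $\begin{ytableau}{-} & {+} \\ {+} & {-}\end{ytableau}$, one at each pair of rows $(2l-1,2l)$ with $l<k$, each bounded by $1$ via the crossed-square case of Lemma~\ref{lemmaStaticEstimates}, plus one triangular block $\begin{ytableau}{-} & {+} \\ \none & {-}\end{ytableau}$ at rows $(2k-1,2k)$, bounded by $2$ as in the discussion following~\eqref{graphScheme2}. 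So each column pair contributes exactly a factor of $2$.

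When $n$ is odd this exhausts the array and we obtain $2^{(n-1)/2}=2^{\lfloor n/2\rfloor}$. When $n$ is even, column $n-1$ is left over, with entries in all rows $1,\ldots,n-1$ and alternating signs $-,+,-,\ldots,-$. Setting $y_i\coloneqq\prod_{k=i}^{n-1}x_k$, so that $0\leq y_1\leq y_2\leq\cdots\leq y_{n-1}\leq 1$, the product of the factors in this column equals $\prod_{i=1}^{n-1}(1+(-1)^{i}y_i)$. Grouping consecutive pairs $(1+y_{2l-1})(1-y_{2l})\leq 1$ (which holds because $y_{2l-1}\leq y_{2l}$) and bounding the last unpaired factor by $1+y_{n-1}\leq 2$ yields $2$ for this column, hence a total of $2^{(n-2)/2}\cdot 2=2^{\lfloor n/2\rfloor}$.

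For sharpness, at $(-1,0,-1,0,\ldots)$ every factor with $i<j$ has some even index $k$ in $[i,j]$ for which $x_k=0$, so the inner product vanishes and the factor equals $1$; the diagonal factors $(1-x_i)$ equal $2$ for odd $i$ and $1$ for even $i$, so $Q_{n-1}$ evaluates to exactly $2^{\lfloor n/2\rfloor}$ at this point. The main obstacle in this plan is the clean treatment of the leftover column when $n$ is even, since it has no ready-made estimate in Lemma~\ref{lemmaStaticEstimates} and must be handled by hand; the sign checks inside the $2\times 2$ squares and triangular blocks are straightforward parity computations, and the rest is bookkeeping.
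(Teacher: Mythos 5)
Your proposal is correct and takes essentially the same route as the paper: covering $C_{n-1,-}$ column-pair by column-pair with squares $\begin{ytableau}{-}&{+}\\{+}&{-}\end{ytableau}\leq 1$ and diagonal triangles $\leq 2$ (exactly the covering sketched in~\eqref{graphScheme2}), handling the leftover column when $n$ is even by the vertical pairs plus a trivial factor $2$, and evaluating at $(-1,0,-1,0,\ldots)$ for sharpness. The only blemish is a harmless sign slip: the leftover column's product is $\prod_{i=1}^{n-1}\bigl(1-(-1)^i y_i\bigr)=(1+y_1)(1-y_2)\cdots(1+y_{n-1})$, which is in fact the pattern your pairing $(1+y_{2l-1})(1-y_{2l})\leq 1$ actually uses, so the argument goes through unchanged.
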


\section{The change of variables for \texorpdfstring{$r_2=1$}{r2=1}}\label{sectionChangeVar}
Starting from the previous considerations for the totally real case, we begin the investigation for the
function $P_{n,1}$. As before, we would like to transform it into a polynomial function via a change of
variables and to look for the maximum of this new function over a proper domain. However, this
transformation will no longer be immediate due to the presence of a couple of complex conjugated numbers
among the $y_i$.\\
In fact, we have to consider an $n$-ple $(y_1,y_2,\ldots,y_k,\overline{y_k},y_{k+1},\ldots,y_{n-1})$
which satisfies condition A). The change of variables~\eqref{changeOfVariablesReal} cannot be applied
directly, since quotients $y_{k-1}/y_k$ and $\overline{y_k}/y_{k+1}$ are not real. A more suitable change
of variables is instead the following:
\begin{equation}\label{changeOfVarComplex}
 x_i\coloneqq\begin{cases}
                y_i/y_{i+1},              & i\neq k-1, k \\
                y_{k-1}/|y_k|,            & i = k-1      \\
                |\overline{y_k}|/y_{k+1}, & i = k
             \end{cases},
\quad
 g \coloneqq \cos(\arg y_k).
\end{equation}
Several things have to be remarked about this transformation: first of all, the resulting function will
no longer be purely polynomial, since it will always contain the term $2\sqrt{1-g^2}$, corresponding to
the factor $|1-y_k/\overline{y_k}|$; moreover, and most importantly, different functions will arise as
different indexes $k$ are chosen for the position of the complex conjugated couple
$(y_k,\overline{y_k})$.

We define \emph{$k$-th ordering of $P_{n,1}$} the function $L_{n,k}$ resulting from the change of
variables~\eqref{changeOfVarComplex} applied to $P_{n,1}$ with the complex couple
$(y_{n-k},\overline{y_{n-k}})$. Here are the first and the second ordering for $n=5$, corresponding to
indexes $k=4$ and $k=3$ for the complex couple.
\begin{align*}
   L_{5,1} =& (1-x_1)         (1-x_1 x_2)  (1-2x_1 x_2 x_3 g + (x_1 x_2 x_3)^2)     \\
            &\phantom{(1-x_1)}(1-x_2)\ \ \ (1-2x_2 x_3 g + (x_2 x_3)^2)             \\
            &\phantom{(1-x_1)(1-x_1 x_2)}  (1-2 x_3 g + x_3^2)\cdot 2\sqrt{1-g^2},  \\[.5\baselineskip]
   L_{5,2} =& (1-x_1)         (1-2 x_1 x_2 g + (x_1 x_2)^2)          (1-x_1 x_2 x_3)\\
            &\phantom{(1-x_1)}(1-2 x_2 g + x_2^2)\ \ \ \ \ \ \ \ \ \,(1-x_2 x_3)    \\
            &\phantom{(1-x_1)(1-2 x_1 x_2 g + (x_1 x_2)^2)}          (1-2 x_3 g + x_3^2)\cdot 2\sqrt{1-g^2}.
\end{align*}
Notice that the polynomial factors containing $g$ and different from the square root correspond to either
products of the form $|1-y_i/y_k|\cdot |1-y_i/\overline{y_k}| = |1-y_i/y_k|^2$ or $|1-y_k/y_j|\cdot
|1-\overline{y_k}/y_j| = |1-y_k/y_j|^2$.

A priori we should then study all the $n-1$ possible orderings resulting from this change of variables,
since every time we end up with a different function. Fortunately, there is a symmetry between the
orderings which allows to discard half of the cases.
\begin{lemma}
Let $n\geq 3$ and $1\leq k\leq n-1$. Then
\[
 L_{n,k}(x_1,x_2,\ldots,x_{n-2},g) = L_{n,n-k}(x_{n-2},x_{n-3},\ldots,x_1,g).
\]
\end{lemma}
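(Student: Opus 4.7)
The plan rests on a reciprocal symmetry of $P_{n,1}$: for every ordered $n$-tuple $(y_1,\ldots,y_n)$ satisfying A) and B), one has
\[
 P_{n,1}(y_1,\ldots,y_n)=P_{n,1}(1/y_n,\,1/y_{n-1},\,\ldots,\,1/y_1).
\]
This is immediate from the definition because, under the involution $(i,j)\mapsto(n+1-j,\,n+1-i)$, which is a bijection of $\{(i,j):1\le i<j\le n\}$, every factor $|1-y_i/y_j|$ of $P_{n,1}$ is sent to the factor $|1-y_{n+1-j}/y_{n+1-i}|$ that appears after inversion; the total product is therefore unchanged. Since inversion reverses moduli, the new tuple is still ordered by increasing absolute value, and the pair of complex conjugates is preserved (with its two entries swapped).

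Next I would observe that this inversion exchanges the combinatorial setup used by $L_{n,k}$ with the one used by $L_{n,n-k}$. In the ordering attached to $L_{n,k}$ the complex pair occupies positions $n-k$ and $n-k+1$; after inversion, the pair ends up at positions $k$ and $k+1$, which is precisely where it is required to sit in order to produce $L_{n,n-k}$. The scalar $g=\cos(\arg y_{n-k})$ is unaffected as well, since inversion changes $\arg y_{n-k}$ into $-\arg y_{n-k}$ and cosine is even.

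The remaining task is to feed the inverted tuple into the formulas~\eqref{changeOfVarComplex} that define the variables of $L_{n,n-k}$ and compare, one by one, with the variables of $L_{n,k}$. A direct computation gives that each ordinary ratio $y_j/y_{j+1}$ arising in $L_{n,k}$ reappears as an ordinary ratio in $L_{n,n-k}$ but with its index reflected according to $i\mapsto n-1-i$, while the two modified entries $x_{n-k-1}=y_{n-k-1}/|y_{n-k}|$ and $x_{n-k}=|y_{n-k}|/y_{n-k+1}$ of $L_{n,k}$ become the corresponding modified entries $x_{k}'=y_{n-k-1}/|y_{n-k}|$ and $x_{k-1}'=|y_{n-k}|/y_{n-k+1}$ of $L_{n,n-k}$, with their indices exchanged, as required by the reflection. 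The main obstacle, and essentially the only one, is purely bookkeeping: the naming convention $y_1,\ldots,y_{n-1}$ skips one step at the complex pair, so the correspondence between the naming index and the position in the ordered tuple differs on the two sides of the complex couple; one must therefore carefully separate the indices $1\le i\le n-k-1$ from the indices $n-k\le i\le n-2$ and check the matching on each range. Once the substitution is verified, the reciprocal symmetry of $P_{n,1}$ gives the claimed identity $L_{n,k}(x_1,\ldots,x_{n-2},g)=L_{n,n-k}(x_{n-2},\ldots,x_1,g)$.
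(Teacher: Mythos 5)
Your argument is exactly the paper's: the authors also prove the lemma by passing to the reciprocal, reversed tuple $z_j\coloneqq 1/y_{n-j}$, observing it is again admissibly ordered with the conjugate pair in the position required for $L_{n,n-k}$, and reading off the identity from the change of variables. Your extra bookkeeping (the index reflection $i\mapsto n-1-i$, the swap of the two modified ratios, and the invariance of $g$) is correct and simply makes explicit what the paper leaves to the reader.
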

\begin{proof}
The sequence
\[
0<|y_1|\leq |y_2|\leq\cdots \leq |y_k|=|\overline{y_k}|\leq\cdots\leq |y_{n-1}|
\]
is equivalent to the sequence
\[
0 < \left|\frac{1}{y_{n-1}}\right|
\leq \cdots
\leq \left|\frac{1}{y_k}\right|
\leq \left|\frac{1}{\overline{y_k}}\right|
\leq \cdots
\leq \left|\frac{1}{y_1}\right|.
\]
The claim follows by constructing the function $L_{n,n-k}$ using the numbers $z_j\coloneqq 1/y_{n-j}$.
\end{proof}
\noindent %
{\bf Remark:} the quantity $P_{n,1}$ can be seen to be estimated by the quantity
$2^{\intpart{(5n-8)/2}}$: in fact, if $y_k$ and $\overline{y_k}$ form the complex conjugated couple,
there are $n-2$ factors of the form $|1-y_j/y_k|^2$ or $|1-y_k/y_j|^2$ with $j\neq k$, each term bounded
by $4$. The term $|1-y_j/\overline{y_j}|$ is bounded by $2$ and removing all these terms from $P_{n,1}$
we obtain a function which becomes $P_{n-2,0}$ via an additional change of variable and which is at most
$2^{\intpart{(n-2)/2}}$. The product $P_{n,1}$ is thus estimated by $2\cdot 4^{n-2}\cdot
2^{\intpart{(n-2)/2}} = 2^{\intpart{(5n-8)/2}}$. However, this estimate turns out to be better than the
trivial bound $n^{n/2}$ only for $n\geq 25$, so it is of no use for the applications we study in
Theorems~\ref{theoremDegree5} and~\ref{theoremDegreesHigher}.

\section{Proof of Theorem~\ref{theoremDegree5}}\label{sectionProofTheorem}
In order to prove an estimate for $P_{5,1}$, we consider the only two possible orderings for its
transformation via the change of variables~\eqref{changeOfVarComplex}, which are the examples $L_{5,1}$
and $L_{5,2}$ shown in the previous section. The study of the two functions will be quite similar, but we
consider them separately since some meaningful differences will occur; moreover, the techniques employed
in this section will be later employed for the study of functions $P_{n,1}$ with $n\geq 6$.
\smallskip\\
{\bf Remark:} in the following lines, we shall often factorize polynomials in several variables with
rational coefficients. The factorization results turned to be available thanks to the computer algebra
MAGMA~\cite{cannon2011handbook}. Moreover, real roots of rational polynomials shall be computed: this
have been made by employing the computer algebra PARIgp~\cite{pari}. MAGMA and PARIgp files describing
the details of these computations can be found in~\cite{DATA}.

\subsection{Estimate for \texorpdfstring{$\mathbf{L_{5,1}}$}{L5-1}}
The first ordering was partially studied in~\cite{battistoniCOnjectural} as a toy model for the
formulation of the conjectures about upper bounds for $P_{n,1}$: there, a partial result about the
maximum of $L_{5,1}$ is proved.
\begin{lemma}\label{lemmaBoundary5_1}
The function $L_{5,1}(x_1,x_2,x_3,g)$ assumes its maximum over $[-1,1]^4$ for $x_3=1$ and $g\neq \pm 1$.
\end{lemma}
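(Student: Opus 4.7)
The assertion $g\neq\pm 1$ is immediate: the factor $2\sqrt{1-g^{2}}$ in $L_{5,1}$ vanishes precisely at $g=\pm 1$, while $L_{5,1}$ takes strictly positive values (for instance at $(-1,-1,1,0)$), so no maximizer lies on those faces. For the claim $x_{3}=1$, I would first exploit a symmetry. With the notation
\[
 A(x_{3}):=1-2gx_{3}+x_{3}^{2},\ \ B(x_{3}):=1-2g(x_{2}x_{3})+(x_{2}x_{3})^{2},\ \ C(x_{3}):=1-2g(x_{1}x_{2}x_{3})+(x_{1}x_{2}x_{3})^{2},
\]
each of $A,B,C$ is invariant under $(x_{3},g)\mapsto(-x_{3},-g)$, while the remaining factors $(1-x_{1})(1-x_{2})(1-x_{1}x_{2})\cdot 2\sqrt{1-g^{2}}$ depend on neither $x_{3}$ nor on the sign of $g$. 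Hence $L_{5,1}(x_{1},x_{2},-x_{3},-g)=L_{5,1}(x_{1},x_{2},x_{3},g)$; in particular $L_{5,1}(x_{1},x_{2},-1,g)=L_{5,1}(x_{1},x_{2},1,-g)$, so any boundary maximizer with $x_{3}=-1$ is automatically matched by one with $x_{3}=1$ (after flipping the sign of $g$).

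The substantive task is therefore to rule out maximizers with $x_{3}$ in the open interval $(-1,1)$. Since the prefactor outside $ABC$ is nonzero at any maximum and does not depend on $x_{3}$, it is enough to prove the uniform inequality
\[
 f(x_{3}):=A(x_{3})B(x_{3})C(x_{3})\;\le\;\max\{f(1),\,f(-1)\}\qquad\text{for every }x_{3}\in[-1,1],
\]
valid for all $(x_{1},x_{2})\in[-1,1]^{2}$ and $g\in(-1,1)$. A preliminary computation of $(\log f)'(\pm 1)$ shows that $x_{3}=\pm 1$ are local maxima of $f$ (the sign of the log-derivative at the endpoints always points inward), so the question reduces to comparing $f$ at the interior critical points, i.e.\ the real roots of the degree-$5$ polynomial $f'(x_{3})$ lying in $(-1,1)$, with the endpoint values $f(\pm 1)$. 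This comparison is naturally handled by the two resultants
\[
 \mathrm{Res}_{x_{3}}\bigl(f(x_{3})-f(1),\,f'(x_{3})\bigr),\qquad \mathrm{Res}_{x_{3}}\bigl(f(x_{3})-f(-1),\,f'(x_{3})\bigr),
\]
which are polynomials in the three parameters $(x_{1},x_{2},g)$; it suffices to certify that at every point of the cube at least one of them has the sign corresponding to the desired bound.

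\textbf{Main obstacle.} The comparison of interior critical values with the endpoint values cannot be performed in closed form, because $f'$ is a genuine quintic in $x_{3}$ whose coefficients depend on three free parameters. The resolution requires exact elimination of $x_{3}$ followed by careful sign certification on the resulting multivariate polynomials; this is precisely the iterated-resultants machinery the paper announces in Section~\ref{sectionProofTheorem}. The heavy lifting is the computer-algebra work: MAGMA is used to factor the eliminants, and PARI/gp to isolate the real roots of the univariate remainders, so that the sign checks on the cube are reduced to a finite number of certified numerical verifications.
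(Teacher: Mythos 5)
Your handling of $g\neq\pm1$ and of the symmetry $L_{5,1}(x_1,x_2,-x_3,-g)=L_{5,1}(x_1,x_2,x_3,g)$ is correct (a small slip: your witness $(-1,-1,1,0)$ actually gives the value $0$ because $1-x_1x_2=0$; take e.g.\ $(-1,0,1,0)$ instead). The genuine gap is the step you call the substantive task: the uniform inequality $f(x_3)=A(x_3)B(x_3)C(x_3)\le\max\{f(1),f(-1)\}$ for \emph{all} $(x_1,x_2)\in[-1,1]^2$ and $g\in(-1,1)$ is simply false, so no resultant/sign-certification scheme can establish it. Concretely, take $x_1=0$, $x_2=-1$, $g=\tfrac{9}{10}$: then $C\equiv 1$ and $f(x_3)=\bigl(1-\tfrac{9}{5}x_3+x_3^2\bigr)\bigl(1+\tfrac{9}{5}x_3+x_3^2\bigr)=(1+x_3^2)^2-\tfrac{81}{25}x_3^2$, so $f(0)=1$ while $f(\pm1)=\tfrac{19}{25}$. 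Since the prefactor $(1-x_1)(1-x_1x_2)(1-x_2)\cdot 2\sqrt{1-g^2}$ is positive here and independent of $x_3$, this means $L_{5,1}(0,-1,0,\tfrac{9}{10})>L_{5,1}(0,-1,\pm1,\tfrac{9}{10})$: on this slice the maximum in $x_3$ is attained strictly inside $(-1,1)$. Your preliminary assertion that $x_3=\pm1$ are always endpoint local maxima of $f$ also fails on parts of the parameter cube (e.g.\ near $(x_1,x_2,g)=(1,\tfrac12,\tfrac9{10})$ one computes $(\log f)'(1)<0$).

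The lemma is nevertheless true because at such parameter values $L_{5,1}$ is far below the global maximum $16M$; hence a proof cannot proceed by freezing $(x_1,x_2,g)$ and pushing $x_3$ to the boundary slice by slice, but must compare interior critical configurations of the full four-variable function against the value attained on the face $x_3=1$ (a genuinely global argument). Note also that this paper does not prove the lemma: it imports it from the first author's earlier work~\cite{battistoniCOnjectural}, so there is no in-paper proof to match; but independently of that, your route breaks at the very first reduction, and the subsequent elimination machinery cannot repair a pointwise inequality that is false.
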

\noindent
Under this assumption, $L_{5,1}$ reduces to the simpler expression
\begin{align*}
 & (1-x_1)         (1-x_1 x_2)  (1-2x_1 x_2 g + (x_1 x_2 )^2)\\
 &\phantom{(1-x_1)}(1-x_2)\ \ \ (1-2x_2  g + x_2^2)          \\
 &\phantom{(1-x_1)(1-x_1 x_2)}  4\cdot (1-g)\sqrt{1-g^2}.
\end{align*}
The search of the maximum for this function over $[-1,1]^2\times (-1,1)$ is carried through various
steps: first of all, we study the behaviour of the function whenever one of the variables is equal to $0$
(this will turn useful for next computations). We have:
\begin{List}
 \item $x_1 = 0$: the function is $L_{5,1}(0,x_2,1,g) = L_{4,1}(x_2,1,g)$ and we know this function is
     at most 16 (see the remark in Section 1). This value is attained at $x_2=-1$ and $g=0$.
 \item $x_2 = 0$: the function is $4\cdot (1-x_1)(1-g)\sqrt{1-g^2}$ which is maximized at $x_1 = -1$
     and $g = -1/2$ giving $6\sqrt{3}=10.392\ldots$(this is verified by studying the partial
     derivatives of the given function).
 \item $g=0$: the function becomes
     \[
      L = 4(1-x_1)(1-x_1 x_2)(1 + (x_1 x_2 )^2) (1-x_2)(1 + x_2^2).
     \]
     We look for the maximum of $L$ over $[-1,1]^2$. First of all, we determine whether there are
     stationary points in the interior $(-1,1)^2$: we derive $L$ with respect to $x_1$ and denote by
     $L_{x_1}$ the only factor of the derivative (which is a polynomial) which is not zero on the
     boundary. This gives
     \[
      L_{x_1} = 4x_2^3x_1^3 + (-3x_2^3 - 3x_2^2)x_1^2 + (2x_2^2 + 2x_2)x_1 + (-x_1 - 1).
     \]
     We define $L_{x_2}$ in the same way but from the derivative with respect to $x_2$ and we obtain
     \begin{align*}
      L_{x_2} =&   (6x_2^5 - 5x_2^4 + 4x_2^3 - 3x_2^2)x_1^3 + (-5x_2^4 + 4x_2^3 - 3x_2^2 + 2x_2)x_1^2 \\
               & + (4x_2^3 - 3x_2^2 + 2x_2 - 1)x_1 + (-3x_2^2 + 2x_2 - 1).
     \end{align*}
     An eventual stationary point must be a common root $(\alpha,\beta)$ of $L_{x_1}$ and $L_{x_2}$, thus
     its coordinate $\alpha$ must be a root of the resultant between the two polynomials with respect to
     the variable $x_2$. This resultant is equal to
     \[
      32x_1^9 + 24x_1^8 + 20x_1^7 + 7x_1^6 + 24x_1^5 + 67x_1^4 + 15x_1^2 + 27.
     \]
     However, this polynomial has no real roots between $-1$ and $1$, so this means that $L$ has no stationary
     points in the interior.

     One then studies the behaviour of $L$ on the boundaries: clearly $L$ is $0$ when $x_1 = 1$ or $x_2=1$,
     while at $x_1=-1$ we have
     \[
      L = 8(1-x_2^4)(1+x_2^2)  \leq 8\cdot \frac{32}{27} = 9.481\ldots
     \]
     (the upper bound is obtained at $x_2=\pm \frac{1}{\sqrt{3}}$)  and at $x_2=-1$ we have
     \[
      L = 16(1-x_1^4) \leq 16.
     \]
\end{List}
Thus we have proved that, whenever one of the three variables is equal to $0$, the function $L_{5,1}$ is
at most $16$: this information will be useful for the next computations. After this preliminary
discussion, we have two things to consider in order to optimize $L_{5,1}$: we must study the behaviour of
the function on the boundaries of $[-1,1]^3$ and then we must look for possible stationary points in the
interior of this cube. Let us begin with the boundary investigation.
\begin{List}
\item $x_1 = 1$: the function $L_{5,1}$ assumes the value 0 under this condition. The same holds for
    the boundaries $x_2 = 1$ and $g = \pm 1$.
\item $x_1 = -1$: the function $L_{5,1}$ becomes
    \[
    S = 8\cdot(1-x_2^2)((1+x_2^2)^2-4x_2^2g^2)(1-g)\cdot\sqrt{1-g^2}.
    \]
    Again, we would like to consider the partial derivatives of $S$ with respect to $x_2$ and $g$ in
    order to find an eventual maximum point (notice that in this case $S$ becomes 0 whenever a boundary
    condition is satisfied by either $x_2$ or $g$). However, we do not compute directly the derivatives
    of $S$ since we do not want to carry the square root term in the research of the points: we consider
    thus only the term
    \[
    L = 8\cdot(1-x_2^2)((1+x_2^2)^2-4x_2^2g^2)(1-g)
    \]
    and we study the system
    \[
    \begin{cases}
    \frac{\partial L}{\partial x_2} = 0,\\
    \frac{\partial L}{\partial g}(1-g^2) - L g = 0,
    \end{cases}
    \]
    where the second quantity is obtained from the derivative rule for $S$ with respect to $g$. We
    factorize the left hand sides of these equations and we discard any factor that either has roots only
    on the boundary or has roots only when some variable is zero: this is because the necessary
    considerations have been already made or will be done in further boundary study. We are thus left
    with the factors
    \begin{align*}
    L_{x_2} &= 3x_2^4 + (-8g^2 + 2)x_2^2 + (4g^2 - 1)
    \intertext{and}
    L_g &= (2g + 1)x_2^4 + (-16g^3 - 4g^2 + 12g + 2)x_2^2 + (2g + 1).
    \end{align*}
    and we study the system $L_{x_2} = L_g = 0$. A stationary point for $S$ must be then a common root
    $(\beta,\gamma)$ of $L_{x_2}$ and $L_g$, hence $\beta$ must be  a root of their resultant with
    respect to $g$, which is $8x_2^4 + x_2^2 - 1$. This polynomial has two roots between $-1$ and 1, and
    substitution of these roots in $L_{x_2}$  provides four stationary points $(\pm \beta,\pm \gamma)$
    where $\beta = 0.54455\ldots$ and $\gamma = 0.29653$. One then verifies that $S(\pm\beta,\gamma) =
    5.9612\ldots$ and $S(\beta,\pm\gamma) = 10.9870\ldots$ (so we are below the value 16 we have found
    before).
\item $x_2 = -1$: this case has been studied in~\cite[Conjecture 1]{battistoniCOnjectural}, where it was
    proved that under the assumption $x_2 = -1$ and $x_3 = 1$ the function $L_{5,1}$ has a maximum at the
    point $(x_1,x_2,x_3,g) = (1/\sqrt{7},-1,1,1/(2\sqrt{7}))$ and the maximum value is exactly $16M$ (the
    first author stated in~\cite{battistoniCOnjectural} that the determination of this maximum for
    $L_{5,1}$ was still an open problem because of the lack of techniques for a rigorous optimization
    over the interior of $[-1,1]^3$).
\end{List}
Finally, we discuss the possible existence of maximum points for $L_{5,1}$ in the open set $(-1,1)^3$.
First of all, we compute the derivatives
\[
\frac{\partial L}{\partial x_1},
\quad
\frac{\partial L}{\partial x_2},
\quad
\frac{\partial L}{\partial g}(1-g^2) - L g.
\]
We factorize them and then we denote by $L_{x_1}$, $L_{x_2}$ and $L_g$ the unique factors of each
derivative which has no roots in the boundary or for some variable equal to $0$. We study then the system
$L_{x_1}=L_{x_2}=L_g=0$: common zeros $(\alpha,\beta,\gamma)$ of these three objects must give also
common zeros $(\alpha,\beta)$ of the resultants
\[
\Res(L_{x_2},L_{x_1};g),
\quad
\Res(L_g,L_{x_1};g).
\]
We denote by $\res1g$ and $\res2g$ the unique factors of these resultants which again have no roots on
the boundary or for some variable equal to zero, and we study the system $\res1g=\res2g=0$. Finally, we
look for the $x_1$-coordinate $\alpha$ of our stationary points by studying the factors of
$\Res(\res1g,\res2g;x_2)$: one verifies that the only acceptable values for $\alpha$ (i.e. roots of this
resultant which are in $(-1,1)$) are $-1/2$ and $3/5$. But if we substitute $x_1=-1/2$ in $\res1g$, we
notice that the only roots of the new polynomial (i.e. the only acceptable values for $\beta$) are $x_2 =
0$ or $x_2=-1$, which constitute boundary cases which we already studied. If instead we substitute
$x_1=3/5$ in $\res1g$, we find a unique admissible value $\beta = 0.7373\ldots$ but substitution of both
$x_1 = \alpha$ and $x_2 = \beta$ in $L_x$ gives no admissible values for $g$.

This means that we do not find stationary points in $(-1,1)^3$ for the function $L_{5,1}$: gathering all
the results, this proves that $L_{5,1}\leq 16\, M$ over $[-1,1]^4$ and this upper bound is attained at
the point $(x_1,x_2,x_3,g) = (1/\sqrt{7},-1,1,1/(2\sqrt{7}))$.

\subsection{Estimate for \texorpdfstring{$\mathbf{L_{5,2}}$}{L5-2}}
In this case we do not have a result similar to Lemma~\ref{lemmaBoundary5_1} which allows to remove one
variable from the optimization. However, it is straightforward to verify that
\[
L_{5,2}(x_1,x_2,1,g) = L_{5,1}(x_1,x_2,1,g) = L_{5,2}(x_1,-x_2,-1,-g)
\]
and thus anytime we are reduced to a case in our computation where $x_3=\pm 1$, we already know that
$L_{5,2}$ is at most $16\, M$. We optimize this function by following the steps we presented for the
function $L_{5,1}$: so we study the cases where one variable is zero as the first thing.
\begin{List}
\item $x_1=0$: just like for $L_{5,1}$, we obtain a function which is a transformation of $P_{4,1}$ via a
    change of variables~\eqref{changeOfVarComplex}, so we know it is $\leq 16$ (and the value is attained
    at $x_2=-1, x_3=1, g = 0$).
\item $x_2 = 0$: this is the same as the case $x_2=0$ for $L_{5,1}$, so we already know it is at most
    $6\sqrt{3}=10.392\ldots$.
\item $x_3=0$: just like $x_1=0$, this case provides a transformation of $P_{4,1}$, and so the function
    under this condition is at most 16.
\item $g= 0$: the function becomes
    \[
    L = 2\cdot(1-x_1)(1+ (x_1 x_2)^2)(1-x_1 x_2 x_3)(1+x_2^2)(1-x_2 x_3)
            (1 + x_3^2).
    \]
    Again, we first look for stationary points in the interior $(-1,1)^3$ for the variables $(x_1,x_2,x_3)$
    and then we look at the behaviour on the boundaries of the three-dimensional cube.\\
    For the stationary points, we need common roots of the derivatives
    \[
    \frac{\partial L}{\partial x_1},
    \quad
    \frac{\partial L}{\partial x_2},
    \quad
    \frac{\partial L}{\partial x_3}
    \]
    and we denote by $L_{x_1}$, $L_{x_2}$ and $L_{x_3}$ the unique factors of each derivative which is
    not always positive and has no roots only in the boundary or for some variable equal to 0. A common
    zero $(\alpha,\beta,\gamma)$ of these objects must give a common zero $(\alpha,\beta)$ for the
    resultants
    \[
    \Res(L_{x_1},L_{x_2};x_3),
    \quad
    \Res(L_{x_3},L_{x_2};x_3).
    \]
    We factorize the first resultant and we keep only the factor without roots on the boundary or for a
    variable equal to 0: we call it $\text{res1}x_3$. We do the same with the second resultant, with the
    remark that the non-trivial factors are now two (both different from $\text{res1}x_3$): we denote
    their product as $\text{res2}x_3$. Finally, the common zero for these two resultants must give a root
    $\alpha$ of their resultant with respect to the variable $x_2$, which is
    \begin{align*}
    21840x_1^{11} &+ 107944x_1^{10} + 280513x_1^9 + 529240x_1^8 + 728752x_1^7 + 802042x_1^6 \\
    &+ 728398x_1^5 + 511918x_1^4 + 311680x_1^3 + 126574x_1^2 + 44721x_1 + 5418.
    \end{align*}
    This polynomial has only one root $\alpha\in (-1,1)$: substitution of $x_1=\alpha$ in
    $\text{res2}x_3$ gives two possible values for $\beta$, and substitution of $x_1=\alpha$ and
    $x_2=\beta$ in $L_{x_3}$ gives two values for $\gamma$. We end with two stationary points, and the
    function $L$ assumes at both the value $3.0285\ldots$

    We see now what happens to $L$ if we assume boundary conditions on $x_1$ and $x_2$: at $x_1=1$ the
    function is trivially zero, while at $x_1=-1$ it becomes
    \[
     4\cdot(1+ x_2^2)(1+x_2^2)(1-(x_2 x_3)^2)(1 + x_3^2).
    \]
    The first factor $(1+x_2^2)$ is trivially $\leq 2$, while the remaining three factors are estimated
    by $2$ thanks to Lemma~\eqref{lemmaStaticEstimates}. Hence the function $L$ is at most $16$ in this
    case.

    If we assume $x_2 = 1$ the function $L$ becomes
    \[
    4\cdot(1-x_1)(1+ x_1^2)(1-x_1  x_3)(1- x_3)(1 + x_3^2) = 4\cdot F.
    \]
    By using partial derivatives and standard boundary optimization in two variables, the factor $F$
    results to be at most $4$, so that also in this case we obtain $L\leq 16$. Finally, we notice that
    for $x_2=-1$ there is nothing to prove, since $L(x_1,-1,x_3) = L(x_1,1,-x_3)$.
\end{List}
Once this preliminary case for the variables equal to zero is discussed, we begin by studying what
happens in the more general cases of either $x_1$ or $x_2$ being equal to $\pm 1$.
\begin{List}
    \item $x_1=1$: the function is trivially zero.
    \item $x_2 = 1$: the function becomes
    \[
    S = 4\cdot (1-x_1)(1-2 x_1 g + x_1^2)(1-x_1 x_3)(1-g)(1- x_3)(1-2 x_3 g + x_3^2)\cdot \sqrt{1-g^2}.
    \]
    Let $L\coloneqq S/\sqrt{1-g^2}$.
    The procedure of studying successive resultants employed before and applied to the quantities
    \[
     \frac{\partial L}{\partial x_1},
     \quad
     \frac{\partial L}{\partial x_3},
     \quad
     \frac{\partial L}{\partial g} (1-g^2) - L g
    \]
    finds no stationary points in the interior of $[-1,1]^3$. Assuming then $x_1=-1$ (which is the only
    meaningful boundary condition we can impose on this case) we obtain
    \[
    16\cdot (1-x_3^2)(1-2 x_3 g + x_3^2)(1-g^2)\cdot \sqrt{1-g^2}.
    \]
    This is exactly the function with maximum equal to $16\cdot M$ (attained at $x_3=1/\sqrt{7}$ and
    $g=1/(2\sqrt{7})$).
    \item $x_2 = -1$: this case reduces to the previous one since it is immediate to verify that
    \[
    L_{5,2}(x_1,-1,x_3,g) = L_{5,2}(x_1,1,-x_3,-g).
    \]
    \item $x_1 = -1$: the function becomes
    \[
     S =  4\cdot (1+2  x_2 g + x_2^2)(1-2 x_2 g + x_2^2)(1-(x_2 x_3)^2)(1-2 x_3 g + x_3^2)\cdot\sqrt{1-g^2}.
    \]
    Let $L\coloneqq S/\sqrt{1-g^2}$.
    The procedure of studying successive resultants applied to the quantities
    \[
     \frac{\partial L}{\partial x_2},
     \quad
     \frac{\partial L}{\partial x_3},
     \quad
     \frac{\partial L}{\partial g} (1-g^2) - L g
    \]
    finds no stationary points in the interior of $[-1,1]^3$. There are no other meaningful boundary
    conditions to impose.
\end{List}
We are thus left with the research of stationary points for $L_{5,2}$ in the open set $(-1,1)^4$. We have
four quantities to consider, which are
\[
\frac{\partial L}{\partial x_1},
\quad
\frac{\partial L}{\partial x_2},
\quad
\frac{\partial L}{\partial x_3},
\quad
\frac{\partial L}{\partial g} (1-g^2) - L g.
\]
We factorize each one of these quantities and we keep only the non-trivial factors (i.e. factors which do
not have roots on the boundary or for some variable equal to zero): we call these factors $L_{x_1}$,
$L_{x_2}$, $L_{x_3}$ and $L_g$ and we study the system $L_{x_1}=L_{x_2}=L_{x_3}=L_g=0$. Then, as in
previous cases, we consider their resultants in order to look for common zeros: we compute then
\[
\Res(L_{x_2},L_{x_1};g),
\quad
\Res(L_{x_3},L_{x_1};g),
\quad
\Res(L_g,L_{x_1};g),
\]
we factorize them and we keep the non-trivial factors, denoting them as $\res1g$, $\res2g$ and $\res3g$.
We study the system $\res1g=\res2g=\res3g=0$ and we compute then the resultants
\[
\Res(\res2g,\res1g;x_3),
\quad
\Res(\res3g,\res1g;x_3).
\]
Now, a particular phenomenon occurs: if we factorize both resultants, it results that they share a common
factor of the form $x_1x_2^2 - 1/4x_2^2 - 3/4$ which may have roots in the interior. We denote this
common factor as $\res C$. Together with this, each one of these two resultants possesses a specific
non-trivial factor which is not shared by the other resultant: we denote these factors as $\res1x_3$ and
$\res2x_3$. We must then proceed considering two distinct subcases.
\medskip\\
CASE 1: We focus on $\res C=0$. This means that we have to extract common roots of the four derivatives
above from the curve $x_1x_2^2 - \frac{1}{4}x_2^2 - \frac{3}{4} = 0$. Luckily, we can recover a relation
between $x_1$ and $x_2$ from this equation of the form
\begin{equation}\label{Relazione_x1}
x_1 = \frac{3+x_2^2}{4x_2^2}.
\end{equation}
Substitution of~\eqref{Relazione_x1} in $\res1g$ and $\res2g$ allows to eliminate the variable $x_1$ and
obtain two new expressions (after factorization) $\res1gS$ and $\res2gS$, which are the only non-trivial
factors arising from this substitution. We can then compute the resultant
\[
\Res(\res1gS,\res2gS;x_3)
\]
whose only non-trivial factor has the form $x_2^8 - 4x_2^6 - \frac{17}{16}x_2^4 - \frac{45}{8}x_2^2 +
\frac{27}{16}$. We obtain then the possible stationary points of this case by looking at the system
\[
\begin{cases}
& x_2^8 - 4x_2^6 - \frac{17}{16}x_2^4 - \frac{45}{8}x_2^2 + \frac{27}{16} = 0 \\
& \res1gS = 0\\
& \text{resC} = 0\\
& L_{x_1} = 0.
\end{cases}
\]
Studying the first equation, one verifies that substitution of the roots of this equation in the second
one give values for $x_1$ which are bigger than 1 in absolute value: hence Case 1 does not yield any
stationary point in our domain.
\medskip\\
CASE 2: We proceed with the usual resultant tree by computing
\[
\Res(\res1x_3,\res1x_3;x_2).
\]
This gives several non-trivial factors which we can multiply with each other to obtain a unique
polynomial $\res x_2$ in the variable $x_1$. We can then proceed backwards as before and study the
system
\[
\begin{cases}
& \res x_2 = 0\\
& \res1x_3 = 0\\
& \res1g   = 0\\
& L_{x_1}  = 0.
\end{cases}
\]
The first equation has only $x_1$ as a variable, the second one has $x_1$ and $x_2$, and so on. This
makes the discussion of the system easier and it allows to show that it has no solutions. Hence, the
function $L_{5,2}$ does not have stationary points in the open set $(-1,1)^4$, and from the previous
discussion we can conclude that the maximum of $L_{5,2}$ over $[-1,1]^4$ is $16\, M$.

\section{Graphical schemes for the proof of Theorem~\ref{theoremDegreesHigher}}\label{sectionSchemesForTheorem2}
Unfortunately the resultant tree procedure employed in the previous section cannot be applied to the
optimization of $P_{n,1}$ when $n \geq 6$. This happens because the increased degree of $P_{n,1}$ not
only makes more complicated the factorization of the resultants in the tree, but also because those
resultants share more and more factors that should be addressed with ad hoc arguments that however are
possible only when their degree is small, and this is no more the case: the argument we have used to
handle Case~2 described before is unavailable in general. Thus, we renounce to detect the exact maximum
and we settle for computing an upper bound by splitting $P_{n,1}$ into several blocks and producing upper
bounds for each block. However, the splitting is strongly influenced by the signs of the variables and in
order to improve the final result we have to treat separately the $2^{n-1}$ subcases corresponding to a
well determined choice for the signs of each variable.
%
%
The next result shows that we can restrict the range for $g$ to $[0,1]$: this halves the total number of
cases.
\begin{lemma}
Let $L_{n,k}(x_1,\ldots,x_{n-2},g)$ be the $k$-th ordering of $P_{n,1}$. Then the maximum of $L_{n,k}$
over $[-1,1]^{n-1}$ is assumed at $g\geq 0$.
\end{lemma}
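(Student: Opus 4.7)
The plan is to exhibit an involution $\sigma$ of $[-1,1]^{n-1}$ that reverses the sign of $g$ while preserving $L_{n,k}$. Since $\sigma$ will map the slice $\{g<0\}$ bijectively onto $\{g>0\}$ while leaving the function invariant, the suprema over the two slices must agree, and hence the global maximum is attained at some point with $g\geq 0$.

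First I would classify the factors of $L_{n,k}$ after the change of variables~\eqref{changeOfVarComplex} with the complex couple at position $m\coloneqq n-k$. The $g$-involving polynomial factors split into two families: type A factors $1-2ag+a^2$ with $a=x_i x_{i+1}\cdots x_{m-1}$ coming from $|1-y_i/y_m|^2$ for $i<m$, and type B factors $1-2ag+a^2$ with $a=x_m x_{m+1}\cdots x_{j-1}$ coming from $|1-y_m/y_j|^2$ for $j>m$. The sole non-polynomial factor $2\sqrt{1-g^2}$ comes from the conjugate pair $(y_m,\overline{y_m})$. Every remaining factor is $g$-free, of the form $1-x_i x_{i+1}\cdots x_{j-1}$ with $i,j\neq m$; the index range $[i,j-1]$ either contains both $m-1$ and $m$ (when the pair straddles the complex position) or neither.

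Then I would define $\sigma$ as the involution that negates $g$, negates $x_{m-1}$ whenever it exists (i.e.\ $m\geq 2$), and negates $x_m$ whenever it exists (i.e.\ $m\leq n-2$). For the two edge cases $m=1$ or $m=n-1$ only one of the $x$-variables is flipped; in the example $L_{5,1}$ of Section~\ref{sectionChangeVar} this means just negating $x_3$. Invariance $L_{n,k}\circ\sigma=L_{n,k}$ is then a factor-by-factor check: each type A (resp.\ type B) product $a$ contains $x_{m-1}$ exactly once and $x_m$ not at all (resp.\ $x_m$ exactly once and $x_{m-1}$ not at all), so $a\to -a$ under $\sigma$; combined with $g\to -g$ this preserves $1-2ag+a^2$. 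Each $g$-free product is preserved since it contains both $x_{m-1},x_m$ simultaneously or neither. Finally, $2\sqrt{1-g^2}$ depends only on $g^2$ and is trivially fixed.

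The only conceptual step is this combinatorial identification of how the distinguished variables $x_{m-1}$ and $x_m$ sit inside the various products; once the symmetry is pinpointed, no analytic obstacle remains and the conclusion is immediate. I expect the proof to be quite short, on the order of half a page.
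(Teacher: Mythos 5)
Your proposal is correct and follows essentially the same route as the paper: the paper's proof is exactly the sign-flip symmetry $(x_{n-k-1},x_{n-k},g)\mapsto(-x_{n-k-1},-x_{n-k},-g)$ (with only $x_{n-2}$ flipped in the edge case $k=1$), justified by the observation that $g$ always appears multiplied by exactly one of the two distinguished variables and never by their product. Your factor-by-factor verification just spells out this same symmetry in more detail, so there is nothing to add.
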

\begin{proof}
Remember that the $k$-th ordering is defined by the couple of complex conjugated numbers
$(y_{n-k},\overline{y_{n-k}})$, and that we can assume $1\leq k\leq \intpart{(n-1)/2} +1$. If $k=1$, then
in the transformation given by the change of variables~\eqref{changeOfVarComplex} the variable $g$
appears always paired with $x_{n-2}$ in the form $x_{n-2}\cdot g$. This proves that
\[
L_{n,1}(x_1,\ldots,x_{n-3},-x_{n-2},-g) = L_{n,1}(x_1,\ldots,x_{n-3},x_{n-2},g).
\]
For $2\leq k\leq \intpart{(n-1)/2} +1$, the variable $g$ is always multiplied with either $x_{n-k}$
or $x_{n-k-1}$; moreover, a product containing $x_{n-k-1}x_{n-k}g$ never appears. Hence, we have
\[
L_{n,k}(x_1,\ldots,-x_{n-k-1},-x_{n-k},\ldots,-g) = L_{n,k}(x_1,\ldots,x_{n-k-1},x_{n-k},\ldots,g).
\]
Both these symmetries show that we can reduce to the case $g\geq 0$ for the search of the maximum.
\end{proof}
Starting from this assumption, we now set the construction of graphical schemes as for the totally real
case: we choose a vector of signs $\epsi=(\epsi_1,\ldots,\epsi_{n-2})$ and instead of studying the
function $L_{n,k}$ we study the analogous function with $\prod_{k=i}^j x_k$ replaced by $\prod_{k=i}^j
\epsi_k\prod_{k=i}^j x_k$, so that we have to study $2^{n-2}$ functions defined over $[0,1]^{n-1}$. Just
like for the real case, we denote these functions as \emph{configurations}. The main difference with
the totally real case is that now we have to take into account also the terms of the ordering in which
the variable $g$ appears, together with the square root term: the factors containing $g$ will be labeled
with a mark `` ' '' after the sign $+$ or $-$, and the square root will be denoted as $R^{1/2}$. Thus,
the graphical schemes of the configurations of $L_{6,1}$, $L_{6,2}$ and $L_{6,3}$ with the vector of
signs $(+,-,-,+)$ are
\[
\ytableausetup{nosmalltableaux}
\begin{ytableau}
 {+}   & {-}   & {+}   & *(\CoTa){+'}   & \none\\
 \none & {-}   & {+}   & *(\CoTa){+'}   & \none\\
 \none & \none & {-}   & *(\CoTa){-'}   & \none\\
 \none & \none & \none & *(\CoTa){+'}   & \none[\qquad 2R^{1/2}]
\end{ytableau}
\qquad \raisebox{1cm}{,} \quad
\ytableausetup{nosmalltableaux}
\begin{ytableau}
 {+}   & {-}   & *(\CoTa){+'}   & {+}            & \none\\
 \none & {-}   & *(\CoTa){+'}   & {+}            & \none\\
 \none & \none & *(\CoTa){-'}   & {-}            & \none\\
 \none & \none & \none          & *(\CoTa){+'}   & \none[\qquad 2R^{1/2}]
\end{ytableau}
\qquad \raisebox{1cm}{,} \quad
\ytableausetup{nosmalltableaux}
\begin{ytableau}
 {+}   & *(\CoTa){-'}   & {+}            & {+}            & \none\\
 \none & *(\CoTa){-'}   & {+}            & {+}            & \none\\
 \none & \none          & *(\CoTa){-'}   & *(\CoTa){-'}   & \none\\
 \none & \none          & \none          & {+}            & \none[\qquad 2R^{1/2}]
\end{ytableau}
\qquad \raisebox{1cm}{.}
\]
The three schemes above represent respectively the functions
\begin{align*}
    & (1-x_1)         (1+x_1 x_2)  (1-x_1 x_2 x_3)    (1-2x_1 x_2 x_3 x_4 g + (x_1 x_2 x_3 x_4)^2)\\
    &\phantom{(1-x_1)}(1+x_2)\ \ \ (1-x_2 x_3)\ \ \   (1-2x_2 x_3 x_4 g + (x_2 x_3 x_4)^2)        \\
    &\phantom{(1-x_1)(1+x_1 x_2)}  (1+x_3)\ \ \ \ \ \ (1+2 x_3 x_4 g + (x_3 x_4)^2)               \\
    &\phantom{(1-x_1)(1+x_1 x_2)(1-x_1 x_2 x_3)}      (1-2 x_4 g +  x_4^2)\cdot2\sqrt{1-g^2},     \\[4.5\baselineskip]
    & (1-x_1)         (1+x_1 x_2)  (1-2 x_1 x_2 x_3 g +(x_1 x_2 x_3)^2)          (1-x_1 x_2 x_3 x_4)\\
    &\phantom{(1-x_1)}(1+x_2)\ \ \ (1-2 x_2 x_3 g +(x_2 x_3)^2)\ \ \ \ \ \       (1-x_2 x_3 x_4)    \\
    &\phantom{(1-x_1)(1+x_1 x_2)}  (1+2 x_3 g + x_3^2)\quad\quad\quad\quad\quad\,(1+x_3 x_4)        \\
    &\phantom{(1-x_1)(1+x_1 x_2)(1-2 x_1 x_2 x_3 g +(x_1 x_2 x_3)^2)}            (1-2 x_4 g +  x_4^2)\cdot2\sqrt{1-g^2},\\[.3\baselineskip]
    & (1-x_1)         (1+2 x_1 x_2 g +(x_1 x_2)^2)        (1-x_1 x_2 x_3)\ \ \    (1-x_1 x_2 x_3 x_4)          \\
    &\phantom{(1-x_1)}(1+2 x_2 g +x_2^2)\quad\quad\ \ \ \,(1-x_2 x_3)\quad\ \ \ \,(1-x_2 x_3 x_4)              \\
    &\phantom{(1-x_1)(1+2 x_1 x_2 g +(x_1 x_2)^2)}        (1+2 x_3 g + x_3^2)     (1+2 x_3 x_4 g + (x_3 x_4)^2)\\
    &\phantom{(1-x_1)(1+2 x_1 x_2 g +(x_1 x_2)^2)(1+2 x_3 g + x_3^2)}(            1-x_4)\cdot2\sqrt{1-g^2}.
\end{align*}
A possible way to estimate this new kind of schemes could be using an algorithm employing dynamical
estimates just like the one for totally real fields, so that all configurations are reduced to a standard
one for which static estimates can be applied. Unfortunately, this algorithm does not seem to be
available due to the new terms with the variable $g$. As an example, we no longer have an estimate of the
form $\ytableausetup{smalltableaux,aligntableaux=bottom}
\begin{ytableau}
 {+} & {-'} \\
\end{ytableau}
\leq
\begin{ytableau}
 {-} & {+'} \\
\end{ytableau}
$. %
In fact, this would be equivalent to the estimate $(1-x)(1+2xyg+(xy)^2) \leq (1+x)(1-2xyg+(xy)^2)$ for
$x,y,g\in [0,1]$, but it easy to verify that, while this is true whenever $g=0$, there are instead values
in the admissible range for which it is false.
Similarly, the estimate
$\ytableausetup{smalltableaux,aligntableaux=bottom}
\begin{ytableau}
 {-} & {+'} \\
 {+} & {-'} \\
\end{ytableau}
\leq
\begin{ytableau}
 {+} & {-'} \\
 {-} & {+'} \\
\end{ytableau}
$ %
is not true. However, some dynamical estimates in this new setting are still possible, as proved in the
following lemma.
\begin{lemma}\label{lemmaDynamicalNew}
We have
$\ytableausetup{smalltableaux,aligntableaux=bottom}
\begin{ytableau}
 {+'} & {-'} \\
\end{ytableau}
\leq
\begin{ytableau}
 {-'} & {+'} \\
\end{ytableau}
$
and
$\ytableausetup{smalltableaux,aligntableaux=bottom}
\begin{ytableau}
 {-'} \\
 {+'} \\
\end{ytableau}
\leq
\begin{ytableau}
 {+'} \\
 {-'} \\
\end{ytableau}
$.
\end{lemma}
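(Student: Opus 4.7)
The plan is to unpack each pictorial inequality into an elementary algebraic inequality and then verify it by direct computation. Recall from the examples worked out earlier in the section that a $+'$ cell at position $(i,j)$ contributes the factor $(1 - 2 P g + P^{2})$ while a $-'$ cell contributes $(1 + 2 P g + P^{2})$, where $P = \prod_{k=i}^{j} x_k$. Write $a$ and $b$ for the products attached to the two primed cells appearing in the statement; the preceding lemma and the setup of the configurations allow us to assume $a, b, g \in [0,1]$, so in particular $1 - ab \geq 0$.

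The main algebraic input is the identity
\[
(1 + 2 a g + a^{2})(1 - 2 b g + b^{2}) - (1 - 2 a g + a^{2})(1 + 2 b g + b^{2}) = 4 g (a - b)(1 - a b),
\]
which is immediate after multiplying out. Each claim of the lemma reduces, after translating the primed labels into their quadratic factors, to checking that the difference of the right-hand and left-hand sides is equal to $4 g (a - b)(1 - a b)$ or its negative, and that the geometry of the two cells forces $(a - b)$ to have the sign needed to make the whole quantity non-negative.

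It remains to verify this geometric sign. In the horizontal case the left cell is $(i,j)$ and the right cell is $(i,j')$ with $j < j'$, so $b$ is obtained from $a$ by multiplying by additional factors $x_{j+1} \cdots x_{j'} \in [0,1]$; hence $b \leq a$ and the difference is non-negative. In the vertical case the top cell is $(i,j)$ and the bottom cell is $(i',j)$ with $i < i'$; now $a$ is obtained from $b$ by multiplying by $x_i \cdots x_{i'-1} \in [0,1]$, so $a \leq b$, and the sign of the factored difference flips to again yield the required inequality.

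The main obstacle here is merely careful sign-keeping. The paragraph preceding the lemma already points out that related-looking swaps — mixing one primed and one unprimed cell, or performing the analogue of the "square" move S) between four primed cells — fail over the full range of parameters, so the content of the lemma is precisely that the two swaps above are the only pure-primed dynamical estimates available in the new setting.
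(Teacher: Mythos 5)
Your proof is correct and essentially the same as the paper's: both arguments translate the primed cells into the factors $(1\mp 2Pg+P^2)$, subtract, and factor the difference, your identity $4g(a-b)(1-ab)$ being exactly the paper's factorization $4xg(1-y)(1-yx^2)$ under the substitution $a=x$, $b=xy$ with $y\in[0,1]$. The sign bookkeeping for the horizontal and vertical cases is handled correctly, so nothing is missing.
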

\begin{proof}
Both these estimates correspond to
\[
(1-2xg+x^2)(1+2xyg+(xy)^2) \leq (1+2xg+x^2)(1-2xyg+(xy)^2).
\]
To prove this, subtract the left hand side from the right hand side: the result is factorized as
$4xg(1-y)(1-yx^2) \geq 0$ (remember that we are assuming all variables to be in $[0,1]$).
\end{proof}
Together with this dynamical estimate, it is also possible to detect new patterns which involve the terms
with $g$, so that the scheme can be covered with patterns and estimated by multiplying the upper bounds
of every pattern. An example of this static inequality is given by the following.
\begin{lemma}\label{lemmaStaticNew1}
We have
$\ytableausetup{smalltableaux,aligntableaux=bottom}
\begin{ytableau}
 {+} & {-'} \\
\end{ytableau}
\leq
32/27$
and
$\ytableausetup{smalltableaux,aligntableaux=bottom}
\begin{ytableau}
 {-'} \\
 {+}  \\
\end{ytableau}
\leq
32/27
$.
\end{lemma}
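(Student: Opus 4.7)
The plan is to identify both graphical patterns with the same three-variable optimization problem and then reduce it by elementary monotonicity arguments to a single-variable extremum. Consider first the horizontal pattern $\ytableausetup{smalltableaux,aligntableaux=bottom}\begin{ytableau} {+} & {-'} \end{ytableau}$ located at positions $(i,j)$ and $(i,j')$ with $j' > j$; setting $u \coloneqq \prod_{k=i}^j x_k$ and $v \coloneqq \prod_{k=i}^{j'} x_k$, the pattern represents $(1-u)(1+2vg+v^2)$, and since $v/u = \prod_{k=j+1}^{j'} x_k \in [0,1]$ the relevant domain is $\{0 \leq v \leq u \leq 1,\, 0 \leq g \leq 1\}$. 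For the vertical pattern $\begin{ytableau} {-'} \\ {+} \end{ytableau}$ at positions $(i,j)$ and $(i',j)$ with $i' > i$, the analogous choice $u \coloneqq \prod_{k=i'}^j x_k$ and $v \coloneqq \prod_{k=i}^j x_k$ gives the same expression $(1-u)(1+2vg+v^2)$ on the same domain, because $v/u = \prod_{k=i}^{i'-1} x_k \in [0,1]$. Thus a single optimization handles both cases.

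Having reduced to one expression, the plan is to maximize $F(u,v,g) \coloneqq (1-u)(1+2vg+v^2)$ on this domain in three steps. First, $\partial F/\partial g = 2v(1-u) \geq 0$ yields $F \leq (1-u)(1+v)^2$, with equality at $g = 1$. Second, since $(1-u) \geq 0$ and $(1+v)^2$ is increasing in $v \geq 0$, the maximum over $v \in [0,u]$ equals $(1-u)(1+u)^2$, attained at $v = u$. Finally, one-variable calculus on $f(u) \coloneqq (1-u)(1+u)^2$ gives $f'(u) = (1+u)(1-3u)$, so the unique interior critical point is $u = 1/3$, where $f(1/3) = (2/3)(4/3)^2 = 32/27$; this exceeds the boundary values $f(0) = 1$ and $f(1) = 0$.

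There is no serious obstacle here: the proof reduces to checking two monotonicities and a single-variable extremum. The only step that requires a little care is the clean identification of $(u,v)$ that makes the crucial constraint $v \leq u$ manifest, which in turn comes from the fact that in both patterns the product defining the $-'$ factor equals the product defining the $+$ factor times additional $x_k$'s lying in $[0,1]$. The bound $32/27$ is evidently sharp, being attained at $u = v = 1/3$ and $g = 1$.
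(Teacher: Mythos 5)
Your proof is correct and follows essentially the same route as the paper: after identifying both patterns with the single expression $(1-x)\bigl(1+2xyg+(xy)^2\bigr)$ (your $u=x$, $v=xy$ with $v\le u$), you maximize in $g$ and in the longer product first and then do one-variable calculus at $x=1/3$ to get $32/27$. The only difference is presentational — you spell out the monotonicity steps and the $v\le u$ constraint that the paper treats as immediate.
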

\begin{proof}
The claim is equivalent to proving that $(1-x)(1+2xyg+(xy)^2) \leq 32/27$. Now, under the assumption that
the variables are all in $[0,1]$, it is immediate to see that the left hand side is maximized for $y = g
= 1$, and it becomes $(1-x)(1+x)^2$: this quantity is maximized at $x=1/3$, giving the value $32/27$ as
maximum.
\end{proof}
The combination of dynamical and static estimates is the tool which allows to obtain estimates for the
configurations in all orderings and degree between $6$ and $9$ as reported in
Theorem~\ref{theoremDegreesHigher}. Differently from the totally real case, the main difficulty is now
the static part of the approach: our method relies much more on this kind of estimates, since the
presence of the terms with $g$ forbids several dynamical estimates. As a consequence, the number of
patterns that have to be recognized is much larger, around 150, and for several of them the optimization
can be quite troublesome. We gathered all the patterns we recognized, together with the corresponding
estimates and their proofs, in the dataset~\cite{DATA}: in the rest of this section we do not show the
proof of every such inequality, but we discuss some of the most meaningful ones. Additional discussion on
the dataset \cite{DATA} and how we used it for the proof of Theorem~\ref{theoremDegreesHigher} can be
found in the next section.

\subsection{Dynamical estimates}
In some cases, an approach completely similar to the one of Lemma~\ref{lemmaDynamical} can be useful:
some configurations for our new graphical schemes can be proved to be less than other ones by using only
dynamical estimates, which can be either the old ones described in Lemma~\ref{lemmaDynamical} or the new
ones given in Lemma~\ref{lemmaDynamicalNew}. An example is the configuration of $P_{6,1}$, first
ordering, defined by the vector of signs $(+,-,+,+)$:
\[
\ytableausetup{nosmalltableaux}
\begin{ytableau}
 {+}   & {-}   & {-}   & {-'}   & \none\\
 \none & {-}   & {-}   & {-'}   & \none\\
 \none & \none & {+}   & {+'}   & \none\\
 \none & \none & \none & {+'}   & \none[\qquad 2R^{1/2}].
\end{ytableau}
\]
In fact, we can apply the estimate H of Lemma~\ref{lemmaDynamical} to the couple $\{(1,1),(1,2)\}$, the
estimate V of the same lemma to $\{(2,3),(3,3)\}$ and the vertical estimate of
Lemma~\ref{lemmaDynamicalNew} to $\{(1,4),(4,4)\}$. This proves that the considered configuration is
estimated by the one (with degree $6$ and with first ordering) defined by the vector of signs $(-,-,-,-)$.
This new configuration requires an ad hoc estimate with static inequalities (which show that the upper
bound is $32$, see~\cite{DATA}), but we have proved that several configurations in this ordering reduce to
this case. Similar reductions occur in other degrees and orderings, though being a small fraction of all
the possible configurations.

\subsection{Easy static estimates}
Although the presence of the terms with $g$ prevents reducing the discussion to a mostly dynamical
approach, several factors of the new graphical schemes are simple enough to be estimated with
inequalities that do not require complicated optimization: a first example is the inequalities
presented in Lemma~\ref{lemmaStaticNew1}. Other instances of this phenomenon is the inequality described
in the following lemma.
\begin{lemma}\label{lemmaStaticNew2}
We have
$\ytableausetup{smalltableaux,aligntableaux=bottom}
\begin{ytableau}
 {+} & {+} &  {-'} \\
\end{ytableau}
\leq 1
$.
\end{lemma}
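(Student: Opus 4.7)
The plan is to translate the pictorial pattern into an inequality between three explicit factors and then reduce it in two elementary steps to a one-variable statement. Label the three cells of the pattern as $(i,j_1), (i,j_2), (i,j_3)$ with $j_1<j_2<j_3$, and set
\[
a\coloneqq\prod_{k=i}^{j_1}x_k,\qquad b\coloneqq\prod_{k=i}^{j_2}x_k,\qquad c\coloneqq\prod_{k=i}^{j_3}x_k.
\]
Since every $x_k$ belongs to $[0,1]$, appending more factors only decreases the product, so $0\le c\le b\le a\le 1$. By the sign convention recalled just before Lemma~\ref{lemmaDynamicalNew}, the claim amounts to
\[
(1-a)(1-b)\bigl(1+2cg+c^{2}\bigr)\le 1\qquad\text{for all }g\in[0,1].
\]

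The first step is to eliminate the variable $g$: the function $g\mapsto 1+2cg+c^{2}$ is non-decreasing on $[0,1]$ (because $c\ge 0$), so its maximum is attained at $g=1$ and equals $(1+c)^{2}$. It then suffices to prove that $(1-a)(1-b)(1+c)^{2}\le 1$.

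The second step uses the ordering $c\le b\le a$: each of the inequalities $1-a\le 1-c$ and $1-b\le 1-c$ holds, and multiplying them gives $(1-a)(1-b)\le (1-c)^{2}$. Combining with the bound from Step~1,
\[
(1-a)(1-b)(1+2cg+c^{2})\le (1-c)^{2}(1+c)^{2}=(1-c^{2})^{2}\le 1,
\]
where the last inequality follows from $c\in[0,1]$.

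There is essentially no obstacle in this argument; the only thing to check is that the proof works even if the three cells are not adjacent in the row, which is automatic because the monotonicity argument only uses $c\le b\le a$, not consecutivity of the indices $j_1,j_2,j_3$. This makes Lemma~\ref{lemmaStaticNew2} one of the \emph{easy static estimates} and illustrates the general strategy for bounding patterns in which a primed cell shares a row with two unprimed cells of sign $+$.
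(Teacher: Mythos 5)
Your proof is correct and follows essentially the same route as the paper: translate the pattern into $(1-a)(1-b)(1+2cg+c^{2})\le 1$, use monotonicity in $g$ to replace the primed factor by $(1+c)^{2}$, and finish with an elementary pairing into factors of the form $1-t^{2}$. The only cosmetic difference is that the paper sets the innermost variable to $1$ and bounds $(1+xy)\le(1+x)$ to reach $(1-x^{2})(1-(xy)^{2})\le 1$, whereas you keep $c$ and bound $(1-a)(1-b)\le(1-c)^{2}$ to reach $(1-c^{2})^{2}\le 1$; both are equally valid.
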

\begin{proof}
The claim is equivalent to proving that $(1-x)(1-xy)(1+2xyzg+(xyz)^2) \leq 1$ with all the variables in
$[0,1]$. It is clear that the polynomial is maximized at $z=g=1$, providing
\[
(1-x)(1-xy)\cdot(1+xy)^2 \leq (1-x)(1-xy)\cdot(1+x)(1+xy) = (1-x^2)(1-(xy)^2) \leq 1.
\]
\end{proof}
An application of this can be seen for the estimate of the configuration of $P_{8,1}$, second ordering,
defined by the signs $(+,-,-,+,-,-)$, whose graphical scheme is
\begin{equation}
\ytableausetup{nosmalltableaux}\label{SchemeEasyStatic}
\begin{ytableau}
*(\CoTa){+}   & *(\CoTa){-}   & *(\CoTa){+}   & *(\CoTa){+}   & *(\CoTa){-'}   & *(\CoTa){+}   & \none\\
\none         & *(\CoTa){-}   & *(\CoTa){+}   & *(\CoTa){+}   & *(\CoTa){-'}   & *(\CoTa){+}   & \none\\
\none         & \none         & {-}           & {-}           & {+'}           & {-}           & \none\\
\none         & \none         & \none         & {+}           & {-'}           & {+}           & \none\\
\none         & \none         & \none         & \none         & {-'}           & {+}           & \none\\
\none         & \none         & \none         & \none         & \none          & {-'}          & \none[\qquad 2R^{1/2}].
\end{ytableau}
\end{equation}
Assuming an estimate for the configuration of $P_{6,1}$, second ordering signs $(-,+,-,-)$, has been
found (\cite{DATA} reports that it is $12.33$), one can find a bound for this configuration by simply
estimating the first two lines. In fact, the signs at positions $\{(1,1),(1,2)\}$, $\{(1,6)\}$ and
$\{(2,6)\}$ are patterns described in Lemma~\ref{lemmaStaticEstimates} and such that they are bounded by
$1$. Similarly, the positions $\{(1,3),(1,4),(1,5)\}$ and $\{(2,3),(2,4),(2,5)\}$ form the pattern
described in Lemma~\ref{lemmaStaticNew2} and so they are bounded by 1 too. Finally, the remaining
position $\{(2,2)\}$ is trivially bounded by 2: this means that the first two lines of this scheme are
bounded by $2$ and thus the examined configuration is bounded by $24.66$. This argument shows that the
optimization of this configuration relies critically on the result for the configuration in degree 6, for
which more accurate estimates are needed.

\subsection{Less easy static estimates}
As we said, the estimate of the previous configuration relies heavily on the upper bound for the
graphical scheme
\[
\ytableausetup{nosmalltableaux}
\begin{ytableau}
 {-}   & {-}   & {+'}   & {-}   & \none\\
 \none & {+}   & {-'}   & {+}   & \none\\
 \none & \none & {-'}   & {+}   & \none\\
 \none & \none & \none & {-'}   & \none[\qquad 2R^{1/2}].
\end{ytableau}
\]
This can be decomposed in three blocks, which are $A\coloneqq \{(1,1),(1,4),(2,4)\}$, $B\coloneqq
\{(1,2),(1,3),$ $(2,2),(2,3)\}$ and $C\coloneqq \{(3,3),(3,4),(4,4)\}$ (this one multiplied with
$2R^{1/2}$). While the block $A$ is immediately seen to be bounded by $2$ thanks to
Lemma~\ref{lemmaStaticEstimates}, the remaining two blocks are more complicated and for them we cannot
give a quick proof just like for the inequality of Lemma~\ref{lemmaStaticNew2}. Nonetheless, the
optimization of these patterns can be obtained following the very same resultant tree strategy which we
employed for the proof of Theorem~\ref{theoremDegree5}.
\begin{lemma}\label{lemmaStaticNew3}
We have $\ytableausetup{smalltableaux,aligntableaux=bottom}
\begin{ytableau}
 {-} & {+'} \\
 {+} & {-'} \\
\end{ytableau}
\leq 32/27$
and
$\ytableausetup{smalltableaux,aligntableaux=bottom}
\begin{ytableau}
 \none     & \none[j] & \none[j'] \\
 \none[i]  & {-}      & {+}       \\
 \none[i'] & \none    & {-}       \\
\end{ytableau}
\, 2R^{1/2} \leq 5.2$ for $j'=j+1$ or $i'=i+1$.
\end{lemma}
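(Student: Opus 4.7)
Both inequalities can be attacked in the same spirit: translate the pattern into an explicit polynomial in independent variables $x_k \in [0,1]$ (via the products $P_{i,j}$), exploit the structural constraints imposed by the geometry of the positions, and then either follow the resultant-tree optimisation of Section~\ref{sectionProofTheorem} (first inequality) or reduce the problem to an elementary calculus exercise (second inequality).

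For the $2\times 2$ pattern, the four positions $(i,j),(i,j'),(i',j),(i',j')$ form a rectangle inside the triangular scheme, so a multiplicity count in the defining variables gives the identity $P_{i,j}\,P_{i',j'}=P_{i,j'}\,P_{i',j}$ together with the inequalities $P_{i,j}\le P_{i',j}$ and $P_{i,j'}\le P_{i',j'}$. Introducing $\alpha\coloneqq P_{i,j}$, $\gamma\coloneqq P_{i',j}$ and $t\coloneqq\prod_{k=j+1}^{j'}x_k\in[0,1]$, one has $P_{i,j'}=\alpha t$ and $P_{i',j'}=\gamma t$, and the claim becomes
\[
F(\alpha,\gamma,t,g)\coloneqq(1+\alpha)\bigl(1-2\alpha t g+(\alpha t)^{2}\bigr)(1-\gamma)\bigl(1+2\gamma t g+(\gamma t)^{2}\bigr)\le \tfrac{32}{27}
\]
on $[0,1]^{4}$ subject to $\alpha\le\gamma$. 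I would first dispose of the boundary faces: on $\alpha=0$ the two top-row factors trivialise and $F$ collapses to the single-row pattern of Lemma~\ref{lemmaStaticNew1}, attaining exactly $32/27$ at $\gamma=1/3,t=g=1$; on $t=1,\alpha=\gamma$ one has $F=(1-\alpha^{2})\bigl[(1+\alpha^{2})^{2}-4\alpha^{2}g^{2}\bigr]\le(1-\alpha^{2})(1+\alpha^{2})^{2}$, again maximised at $32/27$ (for $\alpha^{2}=1/3,g=0$); the remaining faces $\gamma=1,t=0,g\in\{0,1\}$ are either trivial or reduce $F$ to a lower-dimensional problem that is immediate or already covered. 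For the interior I would compute the four partial derivatives of $F$, factor them, discard the factors vanishing only on the boundary, and iterate the resultants to eliminate $g$, then $t$, then $\gamma$, arriving at a univariate polynomial in $\alpha$ whose real roots in $(0,1)$ can be enumerated. The principal obstruction, exactly as in Case~2 of the proof of Theorem~\ref{theoremDegree5}, is that successive resultants may share a spurious common factor corresponding to an algebraic subvariety of candidate critical points; each such factor must be isolated, either parametrised explicitly (as was done via~(\ref{Relazione_x1}) in that proof) or eliminated by a secondary resultant tree, and the values of $F$ on the resulting locus shown not to exceed $32/27$.

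For the three-entry pattern the proof is much lighter, because no primed sign occurs inside the pattern, so the variable $g$ enters only through the external factor $2\sqrt{1-g^{2}}\le 2$. Writing $\alpha\coloneqq P_{i,j},\beta\coloneqq P_{i,j'},\delta\coloneqq P_{i',j'}$, the crucial observation is that each adjacency hypothesis forces $\alpha\delta\le\beta$: if $j'=j+1$ then expanding the products gives $\alpha\delta/\beta=\prod_{k=i'}^{j}x_k\in[0,1]$ (or $=1$ when $i'=j+1$), while if $i'=i+1$ then $\alpha\delta/\beta=\prod_{k=i+1}^{j}x_k\in[0,1]$. Hence $(1-\beta)\le(1-\alpha\delta)$, and it suffices to show
\[
2(1+\alpha)(1-\alpha\delta)(1+\delta)\le 5.2\qquad\text{for all }(\alpha,\delta)\in[0,1]^{2}.
\]
This is a routine two-variable optimisation: the unique interior critical point $\alpha=\delta=1/2$ yields the value $27/8=3.375$, while on the boundary the only non-trivial maxima occur at $(1,0)$ and $(0,1)$ with value $4$. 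The stated bound $5.2$ therefore follows with considerable slack.
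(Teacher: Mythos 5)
The decisive problem is in your treatment of the second pattern. You read the three entries literally as unprimed signs, so that $g$ enters only through the factor $2\sqrt{1-g^{2}}\le 2$, and you reduce to $2(1+\alpha)(1-\alpha\delta)(1+\delta)\le 4$. That elementary computation is correct, but it proves a statement that is essentially vacuous in the paper's framework: an unprimed triangle $-,+;-$ with the stated adjacency is already $\le 1$ by Lemma~\ref{lemmaStaticEstimates}, so the bound $2$ would be immediate and the constant $5.2$ would be inexplicable. In the paper the lemma is applied to the block $C=\{(3,3),(3,4),(4,4)\}$ of the scheme under discussion, whose diagonal entries are \emph{primed}, and the paper's proof accordingly optimizes $S=2\,(1+2xg+x^{2})(1-xy)(1+2yg+y^{2})\sqrt{1-g^{2}}$ (the tableau in the statement has evidently dropped the prime marks, which explains your reading). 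This function attains $3\sqrt{3}=5.196\ldots$ at $(x,y,g)=(0,1,1/2)$, which exceeds your bound $4$: the genuinely $g$-dependent factors cannot be discarded, your two-variable reduction does not apply, and the paper instead runs a three-variable resultant tree (using $\frac{\partial F}{\partial g}(1-g^{2})-Fg$ in place of the $g$-derivative) to show there are no interior stationary points, the maximum being $3\sqrt{3}<5.2$ on the boundary. So the second half of your proposal does not prove the inequality the paper actually needs.

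For the $2\times 2$ pattern your setup coincides with the paper's: the parametrization $\alpha=P_{i,j}$, $\gamma=P_{i',j}$, $P_{i,j'}=\alpha t$, $P_{i',j'}=\gamma t$ with the constraint $\alpha\le\gamma$ is exactly the paper's $(1+xy)(1-2xyzg+(xyz)^{2})(1-y)(1+2yzg+(yz)^{2})$ with $\alpha=xy$, $\gamma=y$, $t=z$, and keeping the constraint is essential (without it $F$ reaches $4$). The faces $\alpha=0$, $g=0$, $\alpha=\gamma$, $t=0$, $\gamma=1$ go through as you indicate. But your boundary list is incomplete: the face $t=1$ with $\alpha<\gamma$ (the paper's $z=1$ case) is not ``immediate or already covered''--- the paper needs a dedicated resultant tree there together with the sub-case $g=1$ --- and the face $g=1$ also requires a (small) resultant computation rather than a one-line estimate. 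Finally, your interior analysis is a plan rather than a proof: you promise the elimination and the handling of shared factors but exhibit none of the computations, whereas in this computer-assisted setting those verifications are the whole content; the paper's interior step is in fact shorter than the four-stage elimination you propose, since it only checks that $\Res(F_{z},F_{g};g)$ has no non-trivial factors. So part one follows the paper's route but is unverified exactly where the work lies, and part two has a genuine gap.
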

\begin{proof}
Let us begin with the first pattern. We want to prove that the function
\[
F = (1+xy)(1-2xyzg+(xyz)^2)(1-y)(1+2yzg+(yz)^2)
\]
is at most $32/27$ for $(x,y,z,g)\in [0,1]^4$. Just like for the proof of $L_{5,1}$, we begin the
optimization by studying the behaviour of this function on the boundary of the hypercube, and then we
shall look for eventual stationary points in $(0,1)^4$ (all the factorizations computed in the next lines
are done via MAGMA, while all the computations of real roots of rational polynomials are done in PARI).
We begin by studying what happens whenever one of the variables is 0 (notice that this is indeed a
boundary computation, since differently from Theorem~\ref{theoremDegree5} we are assuming all the
variables to be non-negative); later we shall study the case of the variables being equal to $1$.
\begin{itemize}
    \item[$x=0$:] the function becomes $(1-y)(1+2yzg+(yz)^2)$, which is the one described in
        Lemma~\ref{lemmaStaticNew1}: hence we already know its maximum is $32/27$.
    \item[$y=0$:] the function trivially becomes identical to 1.
    \item[$z=0$:] the function becomes $(1+xy)(1-y) \leq (1-y^2) \leq 1$.
    \item[$g=0$:] the function is now $(1+xy)(1+(xyz)^2)(1-y)(1+(yz)^2)$: this is maximized at $x=z=1$ giving
    \[
    (1+y)(1+y^2)(1-y)(1+y^2) = (1-y^4)(1+y^2) \leq 32/27
    \]
    (this follows again by Lemma~\ref{lemmaStaticNew1} considering $y^2$ as the variable).
   \item[$x=1$:] the function has now the form
   \[
   (1+y)(1-2yzg+(yz)^2)(1-y)(1+2yzg+(yz)^2) = (1-y^2)[(1+(yz)^2)^2-4(yzg)^2].
   \]
   Notice that, under our hypotheses, this expression is maximized at $g=0$: thus we reduce to the
   previous boundary computation, so that we already know that the function is at most $32/27$ in this
   case too.
   \item[$y=1$:] the function is trivially equal to 0.
   \item[$z=1$:] the function becomes $ L =(1+xy)(1-2xyg+(xy)^2)(1-y)(1+2yg+y^2)$: this does not have a
       direct estimate and so we proceed with the resultant tree, i.e. we look for stationary points of
       $L$ in $(0,1)^3$, i.e. we want to solve
       \[
       \frac{\partial L}{\partial x} = \frac{\partial L}{\partial y} = \frac{\partial L}{\partial g} = 0.
       \]
       We factor these derivatives and denote by $L_x$, $L_y$ and $L_g$ the only factors of each
       derivative which are not trivial (in the sense that they have not only roots on the boundary or
       they are not always positive over the domain). Then we compute $\Res(L_x,L_g;g)$ and
       $\Res(L_y,L_g;g)$, and we call $\res1g$ and $\res2g$ their unique non-trivial factors. We are then
       considering the system $L_x = \res1g = \res2g = 0$ and thus we compute the resultant
       $\Res(\res1g,\res2g;x)$: however, the polynomial we obtain has only factors which are powers of
       $y$ or $(y-1)$, hence trivial: this means that it is not possible to find a stationary point for
       $L$.

       The discussion on $L$ concludes by considering the only boundary condition we can add to $z=1$,
       i.e. $g=1$: in this subcase the function is now $(1+xy)(1-xy)^2(1-y)(1+y)^2$, and since its
       derivative in $x$ has no zeros in $(0,1)$, this case reduces to previous ones and so is not of
       interest.
   \item[$g=1$:]  this case is very similar to the one above, since again we have a function in three
       variables for which a resultant tree is required. Fortunately, this is even easier since, denoting
       by $L_y$ and $L_z$ the unique non-trivial factors of $\partial L/\partial y$ and $\partial
       L/\partial z$ are such that $\Res(L_y,L_z;z)$ has only trivial factors, and since we do not
       have additional boundary conditions to impose, we can skip this case.
  \end{itemize}
Finally, we consider the study over the interior $(0,1)^4$. This requires a resultant tree starting from
four derivatives, so a priori it appears to be more complicated: however, if $F_z$ and $F_g$ are the
unique non-trivial factors of the derivatives of $F$ with respect to $z$ and $g$, one verifies that
$\Res(F_z,F_g;g)$ has only trivial factors, so that the two derivatives have no common roots in the
interior and thus stationary points for $F$ do not exist. This concludes the optimization of the first
pattern.

The optimization of the second pattern is completely similar, since it is equivalent to proving that
\[
S =   \underbrace{(1+2xg+x^2)(1-xy)(1+2yg+y^2)\cdot 2}_F\cdot\sqrt{1-g^2}
\]
is bounded by $5.2$ and this is done via a simpler resultant tree that only involves three variables. The
only thing we remark is that the derivatives are made for the function $F$ and that instead of the one
with respect to $g$ we consider the quantity $\big(\frac{\partial F}{\partial g} (1-g^2)- F g\big)$. In
this way, the resultant tree procedure shows that the function $S$ has no stationary points in $(0,1)^3$
and its maximum is assumed at the boundary values $(x,y,z)=(0,1,1/2)$ and is equal to $3\sqrt{3} < 5.2$.
\end{proof}
The results of Lemma~\ref{lemmaStaticNew3} allow to estimate the blocks $B$ and $C$ of the scheme we are
considering: we multiply then the upper bounds of every block and we end with the upper bound $2\cdot
32/27 \cdot 5.2 = 12.3259\ldots < 12.33$ for this configuration.

Many other patterns we found are estimated with a resultant tree procedure in three or four variables
like the one described above. However, not all the necessary patterns involve such a small number of
variables or an easy procedure.

\subsection{Complicated static estimates}
The $2^4$ different configurations of $L_{6,1}$ (i.e. the number of transformations of $P_{6,1}$ in the
first ordering we get with the different choice of the signs of their variables) can be estimated using
static or dynamic inequalities which are in similar shape to the ones we have briefly described before,
or in the worst case not so much more complicated. This is no more the case whenever we change the
ordering and/or the degree: though many of the configurations can still be bounded in similar fashion,
some other ones do need of a more detailed investigation, especially in order to obtain an upper bound
which is suitable for our original number-theoretic goal. We illustrate this with the following example,
which is the configuration of $L_{8,4}$ (i.e. fourth ordering in degree 8) with the vector of signs
$(+,-,-,-,-,-)$.
\[
\ytableausetup{nosmalltableaux}
\begin{ytableau}
{+}   & {-}   & {+'}   & {-}   & {+ }   & {-}   & \none\\
\none & {-}   & {+'}   & {-}   & {+ }   & {-}   & \none\\
\none & \none & {-'}   & {+}   & {- }   & {+}   & \none\\
\none & \none & \none  & {-'}  & {+'}   & {-'}  & \none\\
\none & \none & \none  & \none & {-}    & {+}   & \none\\
\none & \none & \none  & \none & \none  & {-}   & \none[\qquad 2R^{1/2}].
\end{ytableau}
\]
This is a complicated scheme to estimate, due to the presence of multiple factors to consider and the
symmetric disposition of the $g$-terms. In fact, none of the estimates of the previous kind is able to
give a satisfying estimate for this configuration (the concept of ``satisfying'' will be further explained
in the next sections). It was thus necessary to provide more complicated decompositions of this scheme,
each one requiring a much longer optimization process (similar to the one employed for the estimate of
$L_{5,2}$) and the one we gave is made of the following two blocks: the first one is the gray one which
also includes the factor $2R^{1/2}$, while the second one is formed by the remaining of the scheme.
\[
\ytableausetup{nosmalltableaux}
\begin{ytableau}
{+}   & {-}   & {+'}          & *(\CoTa){-}   & *(\CoTa){+ }   & {-}         & \none\\
\none & {-}   & *(\CoTa){+'}  & {-}           & {+ }           & *(\CoTa){-} & \none\\
\none & \none & *(\CoTa){-'}  & *(\CoTa){+}   & {- }           & {+}         & \none\\
\none & \none & \none         & *(\CoTa){-'}  & *(\CoTa){+'}   & {-'}        & \none\\
\none & \none & \none         & \none         & {-}            & {+}         & \none\\
\none & \none & \none         & \none         & \none          & {-}         & \none[\qquad 2R^{1/2}].
\end{ytableau}
\]
\begin{lemma}\label{lemmaStaticNew4}
The gray block multiplied with $2R^{1/2}$ is bounded by $9.482$.
\end{lemma}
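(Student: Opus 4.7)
The plan is to treat the gray block as an explicit function $F$ of the seven variables $x_1,\dots,x_6,g\in[0,1]$ and to bound it via the resultant tree strategy developed in Section~\ref{sectionProofTheorem}. From the primed/unprimed signs in the gray cells one reads that $F$ is the product of four real factors, corresponding to the positions $(1,4),(1,5),(2,6),(3,4)$ of the scheme, and four factors that depend on $g$, corresponding to the primed positions $(2,3),(3,3),(4,4),(4,5)$, together with the square root factor $2\sqrt{1-g^2}$.

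Before running the optimization, I would try to eliminate as many variables as possible by easy monotonicity arguments. The variable $x_6$ appears only inside the factor $(1+x_2x_3x_4x_5x_6)$, which is monotonically increasing in $x_6$, so one may set $x_6=1$. The variable $x_1$ appears only inside the two factors $(1+x_1x_2x_3x_4)(1-x_1x_2x_3x_4x_5)$, so, writing $u=x_1x_2x_3x_4$, the resulting univariate product can be maximized analytically in $u\in[0,1]$. These two reductions leave an optimization of a function of at most five variables $x_2,x_3,x_4,x_5,g$, on which the already-proved inequalities can be composed to yield preliminary upper bounds.

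Next I would deploy the full resultant tree mechanism. After checking all the boundary cases for each of the remaining variables (each one set to $0$ or $1$), which reduce to lower-dimensional problems that can be dealt with using the static estimates previously collected in Lemmas~\ref{lemmaStaticEstimates}, \ref{lemmaStaticNew1}, \ref{lemmaStaticNew2} and \ref{lemmaStaticNew3}, I would search for interior stationary points by setting the partial derivatives with respect to $x_2,x_3,x_4,x_5$ equal to zero and, following the trick used in the proof of Lemma~\ref{lemmaStaticNew3}, replacing the derivative in $g$ by the algebraic equation $(1-g^2)\,\partial F/\partial g - F g=0$, which is polynomial in all the variables. Each derivative is factored via MAGMA, the non-trivial factor is isolated, iterated resultants are computed in a chosen elimination order, and the resulting univariate polynomial is solved numerically in PARI.

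The main obstacle I anticipate is the size of the polynomial system. With four polynomial equations in five variables, the iterated resultants can acquire very high degrees and their factorizations may share spurious common components, as was already the case in the proof of the estimate for $L_{5,2}$. When this happens one must split the analysis according to the vanishing of the shared factor, an ad hoc argument that can become delicate. Moreover, the numerical target $9.482$ is tight enough that naive multiplications of static bounds overshoot it, so some of the sub-cases will require a careful exact comparison to squeeze the estimate below the threshold. Once every boundary maximum and every interior critical value is shown to be $\leq 9.482$, the claim follows.
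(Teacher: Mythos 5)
Your identification of the eight gray factors is correct, and most of your skeleton is exactly what the paper does: drop $x_6$ by monotonicity (the factor $(1+x_2x_3x_4x_5x_6)$ is increasing in $x_6$), examine all faces of the cube, and hunt interior critical points via a resultant tree in which the $g$-derivative is replaced by a polynomial equation, splitting into subcases when the iterated resultants share common factors (the paper runs this on the six-variable system in $x_1,\dots,x_5,g$, meets five shared factors, each linear in some variable, plus one residual branch whose interior critical values, about $1.41$, are dominated by the boundary; the full computation lives only in the dataset \cite{DATA}). The genuine gap is your preliminary elimination of $x_1$. Writing $u=x_1x_2x_3x_4$, the admissible range is $u\in[0,\,x_2x_3x_4]$, not $[0,1]$; replacing $(1+u)(1-u x_5)$ by its maximum over $u\in[0,1]$ decouples $u$ from $x_2x_3x_4$ and is far too lossy. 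Concretely, at $x_2=x_3=1$, $x_4=x_5=0$, $g=0$ the remaining factors give $2\cdot 2\cdot 2\sqrt{1-g^2}=8$, while $\max_{u\in[0,1]}(1+u)(1-u x_5)=2$, so your relaxed five-variable objective already reaches $16$; the true block value there is $8$, since $u=x_1x_2x_3x_4=0$ forces the pair to equal $1$. Because the target $9.482$ is essentially sharp --- the block attains $8\cdot\tfrac{32}{27}=\tfrac{256}{27}=9.4814\ldots$ at $x_1=0$, $x_2=x_3=x_5=x_6=1$, $x_4=1/\sqrt{3}$, $g=0$ --- no argument whose first step inflates the supremum to at least $16$ can recover the claimed bound. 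You must either keep $x_1$ as a genuine variable in the critical-point system (as the paper does, with $L_x=L_y=\dots=L_g=0$ in six variables), or perform the maximization in $u$ over the constrained interval $[0,x_2x_3x_4]$, which is piecewise and no longer polynomial.

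Two smaller points. First, the $g$-equation must be formed from the polynomial part $L=F/(2\sqrt{1-g^2})$, namely $(1-g^2)\,\partial L/\partial g - Lg=0$, as in the proof of Lemma~\ref{lemmaStaticNew3}; written with the full $F$ (which you defined to include the square-root factor) the expression is neither polynomial nor equivalent to $\partial F/\partial g=0$. Second, your anticipation of the computational difficulties (shared spurious components forcing case splits, and the tightness of $9.482$) is accurate, but note that the paper does not prove this lemma in the text at all: after the structural description it defers the boundary analysis and the six subcases to the MAGMA/PARIgp files in \cite{DATA}, so a complete written proof would in any case require carrying out those computations, not just outlining them.
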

We do not give here the complete proof of this lemma and we refer instead our database~\cite{DATA} which
contains the complete proof written in MAGMA and PARIgp files; nonetheless, we show one of the main
difficulties occurred in the estimate of this block. In fact, the function we need to estimate has the
form
\begin{align*}
 S = (1+xyzt)(1-xyzta)&(1-2yzg+(yz)^2)(1+yztab)(1+2zg+z^2)(1-zt)\\
                      &(1+2tg+t^2)(1-2tag+(ta)^2)\cdot 2\sqrt{1-g^2}.
\end{align*}
We now have six variables to consider, and this results in considering a system of six polynomial
equations to solve. Again, we start by the derivatives of $S$ with respect to every variable (with the
derivative with respect to $g$ given in terms of $\partial L/\partial g\cdot (1-g^2) - L\cdot g$, where
$L\coloneqq S/(2\sqrt{1-g^2})$). Factorization gives some non trivial factors that we gather into a
system
\[
L_x = L_y = L_z = L_t = L_a = L_g =0
\]
and the usual resultant tree and factorization process (done with resultants of these factors with each
other with respect to the variable $a$, $g$ and then $x$) gives
\[
\res1a = \res2a = \res3a = \res4a = \res5a = 0,
\]
\[
\res1g = \res2g = \res3g = \res4g = 0,
\]
\[
\res1x = \res2x = \res3x = 0.
\]
The next step consists in looking for the common zeros of the last three resultants, and so we compute
and factor the resultants $\Res(\res2x,\res1x;y)$ and $\Res(\res3x,\res1x;y)$. However, now there is an
abundance of non-trivial factors, and most of them are common between the two resultants: in fact,
$\Res(\res2x,\res1x;y)$ and $\Res(\res3x,\res1x;y)$ share five common non-trivial factors $\res C1$,
$\res C2$, $\res C3$, $\res C4$, $\res C5$ and they both posses a specific and different non-trivial
factor, which are respectively $\res1y$ and $\res2y$. This forces to consider six subcases, even if five
of them are very similar.
\medskip\\
CASES 1-5: we consider the common factor $\res Ci$ (with $i\in\{1,\ldots,5\}$) and we look for
stationary points of $S$ over the locus $\res Ci=0$. These common factors are polynomial expressions
which are simpler than the ones describing the factors of the derivatives, and thus one can consider the
system
\[
L_x = L_y = L_z = L_t = L_a = \res Ci =0
\]
and restart the resultant tree process, which is now easier and with simpler resultants since a
complicated condition has been replaced with a simpler one. Iteration of this process eventually leads to
the desired polynomial in one variable from which we can extract roots and move backwards to search for
stationary points.

In these specific cases, the factors $\res Ci$ are always linear in some variable, and thus the
relation $\res Ci=0$ allows to substitute one variable in a chosen previously computed polynomial:
in particular, if we substitute in either $\res1x$ or $\res1g$, we obtain polynomials which
have no roots in the open interval $(0,1)$.
\medskip\\
CASE 6: we proceed with the resultant tree computing $\Res(\res1y,\res2y;z)$: we multiply the non-trivial
factors of this resultant obtaining a polynomial $\res1z$ and finally we solve the system
\[
L_y = \res1a = \res1g = \res1x = \res1y = \res1z = 0
\]
where the first polynomial has six variables, the second five, the third four and so on. The search of
the roots and the evaluation give some stationary points, which however give small local maximums (around
$1.41\ldots$) that are easily dominated by values on the boundary.

The detailed optimization of the interior and the boundary is reported in~\cite{DATA} and similar
difficulties (sometimes with worse expressions) are encountered in the estimate of the second block and
in the optimization of some other configurations for which the old procedures do not work suitably. They
all share, however, the same ``resultant tree/common factors'' behaviour that we described with this
example.

A similar optimization procedure shows that the non-coloured part of the scheme, if considered as a
unique block, is estimated by $8.641$. Thus the scheme is bounded by the product of the upper bounds of
the two blocks, which is $82$. We further improve this result verifying that the gray block is
bounded by $8$ when $g\in [1/2,1]$ and that the non-coloured part is bounded by $8.25$ when $g\in[0,1/2]$.
Therefore the scheme is actually bounded by $\max(9.482\cdot 8.25, 8\cdot 8.641) \leq 78.3$.

\section{Technical and computational remarks}\label{sectionTechnicalRemarks}
In this section we gather several remarks about our computations and the estimates we needed in order to
deal with all the graphical schemes we considered.
\medskip\\
1) As we mentioned, the optimization in the degree $5$ case, i.e. $P_{5,1}$, was made considering each
variable lying in $[-1,1]$: the function was simple enough in both orderings to provide a rigorous
optimization. For $P_{n,1}$ with $n\in\{6,7,8,9\}$, only variables between 0 and 1 were considered.
\medskip\\
2) We have three (respectively three, four, four) orderings for $P_{6,1}$ (respectively $P_{7,1}$,
$P_{8,1}$, $P_{9,1}$). Each ordering unfolds into 16 configurations (respectively 32, 64, 128), each one
defined by a vector of signs. Every configuration is estimated by recognizing patterns into it and either
bounding them via static estimates or replacing them with other patterns via dynamic estimates. In
several cases this is achieved just estimating the first line or the last column of the scheme and then
multiplying the obtained upper bound with the one previously found for the remainder of the scheme, which
results to be a configuration in smaller dimension which has been studied before. The scheme shown
in~\eqref{SchemeEasyStatic} that we have already discussed earlier is an example of this procedure.
\medskip\\
3) The dataset~\cite{DATA} contains the collection of all static and dynamic estimates we employed for
the optimization of the orderings: more in detail, they are gathered in the file
``estimates-general-database.txt''. The file contains slightly more than 150 estimates, divided into 9
groups. Group A) contains the estimates first employed for the totally real case and described in Lemma
\ref{lemmaDynamical} and Lemma~\ref{lemmaStaticEstimates}. Groups from B) to G) collect estimates
involving the terms containing the new variable $g$: their labelling follows no particular order, apart
from chronological appearance and similarities in their resolution. Moreover, these estimates are used
mostly for the cases in degree $6$, $7$ and $8$. Group H) contains inequalities which turned useful for
dealing with the degree 9 case: some of them were later used in lower degree cases replacing some
estimates of the previous groups since they turned to be easier to apply. Group I) contains unused but
proved estimates. Finally, the tiny group RMK contains four inequalities which allow to reduce the number
of variables in the optimization of a specific scheme and to estimate a large amount of factors in a
similar way to the ``triangle estimate'' of Lemma~\ref{lemmaStaticEstimates}.
\medskip\\
4) Apart from the very easy ones, the estimates in this collection are all proved by eliminating the
variables via the iterated computation of the resultants, as explained in the previous section: the
factorization process was carried most of the times in MAGMA (which is very suitable for the
factorization of rational polynomials in several variables) while the computation of real roots and
evaluation of polynomials was done mostly in PARIgp.\\
For the search of the solutions of these systems we have also considered the possibility to take
advantage of more advanced tools in Elimination Theory, as Gr\"{o}bner bases, and using MAGMA and {\tt
msolve}~\cite{BerthomieuEderSafey-El-Din} for these computations. However, apart from some marginal
success, we have discovered that both softwares are not strong enough to simplify in some significant way
our arguments. For example, MAGMA cannot compute a Gr\"{o}bner basis for the system giving the stationary
points of $L_{6,1}$. On the other hand, {\tt msolve} computes the basis very quickly, but then cannot
compute the solutions since the system contains a positive-dimensional variety that {\tt msolve} is not
able to make explicit. This is unfortunate, since a direct attack of this system would be able to compute
the true maximum for $P_{6,1}$. The same phenomenon appears with the systems which come with many other
inequalities we considered for the proof of Theorem~\ref{theoremDegreesHigher} (see Cases~1-5 which are
described in Section~6.4). For this reason we have decided to relay our computations on the old technique
of eliminating variables via an iterated chain of resultants.
\\
The dataset~\cite{DATA} contains a MAGMA file and a PARIgp file for every proved estimate: the PARIgp
part especially contains a program which allows to compute all the roots of the examined polynomials.
\medskip\\
5) The proof of some estimates (especially the ones in group G)) turned out to be longer, since they
involved polynomials with six or seven variables for which many subcases had to be considered during the
optimization. Many of these cases required intermediate considerations due to the presence of the common
factors between the considered resultants: they were dealt with in the same way as done in the proof of
the second ordering of Theorem~\ref{theoremDegree5}.
\medskip\\
6) Many configurations are estimated by dividing them in blocks and by providing an estimate for every
block via resultant trees. Sometimes, choosing the blocks for the decomposition is immediate; however,
there are cases where a convenient division in blocks was not evident (such was, for example, the
configuration of the fourth ordering $L_{8,4}$ in degree $8$ given by the vector of signs $(+,-,-,-,-,-)$).
We looked for a good decomposition by writing the object function in MATLAB and by applying the Global
Optimization Toolbox, which provided several groups of blocks and their (numerical) maximums whenever the
variables are between 0 and 1. The MATLAB results, however, were only considered as suggestions and not
as proved facts: whenever we found what seemed a convenient decomposition, we have produced a full proof
that the proposed maximum of each block was indeed correct (the proof was carried again via the resultant
tree procedure in MAGMA/PARIgp).
\medskip\\
7) Sometimes, in order to improve the results and obtain upper bounds suitable for our number-theoretic
goals, a simple division in blocks was not sufficient. In some specific cases, like the configuration of
the third ordering $L_{8,3}$ in degree $8$ given by the vector of signs $(+,-,+,+,-,$ $+)$, we not only
divided the function in two blocks, but we also considered for every block two precise subcases: the
first one is given by assuming a specific variable $x_i$ in an interval $[0,\alpha]$ (where $\alpha\in
(0,1)$ is a convenient number), the second one by assuming $x_i\in [\alpha,1]$. This was done in order to
exploit a cancellation behaviour, since the two blocks do not assume their maximum values on the same
points: in particular, one block will assume its maximum for $x_i\in [0,\alpha]$ and be instead quite
small in $[\alpha,1]$, while the other block will present the opposite behaviour. The necessary
inequalities, with their MAGMA/PARIgp files, are present in the dataset.

\section{Final remarks on the results}\label{sectionFinalRemarks}
The following table represents the upper bounds we detected for each ordering.
\begin{table}[H]\small
    \centering
    \begin{tabular}{|l|c|c|c|c|c|c|c}
    \hline
        \backslashbox{ordering}{degree $n$ and $r_2=1$} & 5 & 6 & 7 & 8 & 9\\
         \hline
        1st & 16M & 32    & 32M   & 64M   & 155.1 \\
        \hline
        2nd & 16M & 32M   & 54M   & 79.42 & 190.2 \\
        \hline
        3rd &     & 34.89 & 65.81 & 79.2  & 201.4 \\
        \hline
        4th &     &       &       & 80    & 233.1 \\
        \hline
    \end{tabular}
    \label{TableMaximum}
\end{table}
First of all, we compare these upper bounds with the basic bound $n^{n/2}$: the new bounds are
considerable improvements being reduced by the factor $3.3$ ($6.2$, $13.8$, $51.2$, $84.4$, respectively)
in the case of degree $5$ ($6$, $7$, $8$, $9$, respectively).

We also know from Theorem~\ref{theoremDegree5} that the new upper bound $16M$ in degree $5$ is the best
possible for each ordering, since this value is attained in both cases at the point $(x_1,x_2,x_3,g) =
(1/\sqrt{7},-1,1,1/(2\sqrt{7}))$. The upper bounds for degree $6$ first ordering and degree $7$ first
ordering are sharp too, since
\[
L_{6,1}((-1,0,-1,1,0)) = 32
\]
and
\[
L_{7,1}((-1,0,1/\sqrt{7},-1,1,1/(2\sqrt{7}))) = 32M.
\]
Notice in particular that the correct bound for degree $6$ first ordering is two times the bound for the
function $P_{4,1}$, and the one in degree $7$ first ordering is twice the bound for $P_{5,1}$. We
conjecture the following:
\begin{List}
 \item For a fixed degree $n$, the maximum of $L_{n,k}$ should be the same independently from the chosen
     $k$-th ordering, exactly like it happens for $n\leq 5$.
 \item the maximum of $P_{n,1}$ follow a recursive formula starting from degree 4 and 5:
     \[
      \sup P_{n+2,1} = 2\sup P_{n,1}
      \quad\text{for $n\geq 4$, so that}\quad
      \sup P_{n,1} =
      \begin{cases}
       2^{(n+4)/2}     & n\geq 4 \text{ even}\\
       2^{(n+3)/2}\, M & n\geq 5 \text{ odd}.
      \end{cases}
     \]
\end{List}
We conclude by considering the degree $9$ case. At the moment the known list of ten fields with signature
$(7,1)$ in LMFDB is not proved to be complete, but one conjectures that this is the case. Assume that
this is true. The PARIgp computation of the regulators yields their true value since the conditions of
Proposition~\ref{prop4} are satisfied, and the smallest value is approximatively $18.0874$ and comes from
the field with smallest discriminant in the list, i.e. the field defined by the polynomial
\begin{equation}\label{polMinDisc(7,1)}
x^9 - x^8 - 5x^7 + 6x^6 + 5x^5 - 11x^4 + 5x^3 + 6x^2 - 6x + 1
\end{equation}
and absolute discriminant equal to $1904081383$. Adjusting the level $R_0$ in Proposition~\ref{prop2} to
$18.1$ and using the upper bound $233.1$ in Theorem~\ref{theoremDegreesHigher} for $P_{9,1}$ we get the
bound $\log |d_K|\leq 47.334357\ldots$. Thus, in order to explore all $(7,1)$ fields having a regulator
$\leq 18.1$ we should explore all fields with absolute discriminant up to $\exp(47.334357\ldots)$.
Unfortunately in this case Proposition~\ref{prop3} does not help to reduce the range, since $4
g_{7,1}(\exp(-47.334357\ldots))$ is negative, and the task remains impossible. The only thing we can
conclude (using the classification method with this upper bound) is that, imposing $R_0:=5.3$,
Proposition~\ref{prop1} gives the upper bound $|d_K|\leq \exp(41.471548\ldots)$: we have also
$2g_{7,1}(\exp(-41.471548\ldots)) = 32.365375 > 5.3$ and $4g_{7,1}(1/1904081382)=16.786962\ldots > 5.3$,
therefore any field of signature $(7,1)$ with $|d_K| > 1904081382$ has regulator above $5.3$, and if
$1904081383$ is indeed the minimal discriminant in this signature, this would mean that every field with
signature $(7,1)$ would have regulator bigger than $5.2$.

The situation does not improve even if we assume for $P_{9,1}$ the conjectured optimal upper bound equal
to $64\, M = 66.786126\ldots$. This is because looking for fields with $R_K\leq 18.1$ is still unfeasible
by the classification method since Proposition~\ref{prop1} now gives $|d_K| \leq \exp(44.834413\ldots)$
and $4 g_{7,1}(\exp(-44.834413\ldots))$ is again negative. The classification remains then not
possible and the only thing we can conclude, by operating like above and assuming the conjectural bound
$64\,M$, is that all fields in signature $(7,1)$ with $|d_K| > 1904081382$ have regulator which is at
least $9.2$.

It appears then that, at least for the fields with one complex embedding, the method for the
classification of number fields with small regulator has reached its limits and new theoretical
improvements are needed for the resolution of the problem: in particular, it would be of interest to know
if more efficient versions of the lower bound~\eqref{lowerBoundRegulator} for $R_K$ can be obtained,
either by improving the existent result using explicit formulae of Dedekind Zeta functions or by new
tools.

\end{document}